\DeclarePairedDelimiter\floor{\lfloor}{\rfloor}
\DeclareMathOperator\var{var}
\newcommand{\mc}{\mathcal}
\newcommand{\mb}{\mathbb}
\newcommand{\mg}{\mathbf}
\newcommand{\R}{\mb R}
\newcommand{\N}{\mb N}
\newcommand{\PP}{\mb P}
\newcommand{\eea}{\end{align}}
\renewcommand{\epsilon}{\varepsilon}
\renewcommand{\bar}{\overline}
\newcommand{\bo}{\boldsymbol}
\renewcommand{\phi}{\varphi}
\renewcommand\upsilon{\theta}
\newtheorem{theorem}{Theorem}[section]
\newtheorem{corollary}[theorem]{Corollary}
\newtheorem{lemma}[theorem]{Lemma}
\newtheorem{proposition}[theorem]{Proposition}
\theoremstyle{definition}
\newtheorem{definition}[theorem]{Definition}
\theoremstyle{remark}
\newtheorem{remark}[theorem]{Remark}
\newtheorem{algorithm}{Algorithm}
\newcommand{\balgorithm}{\begin{algorithm}\begin{framed}\ }
\newcommand{\ealgorithm}{\end{framed}\end{algorithm}}
\newcommand{\bd}{\begin{definition}}
\newcommand{\ed}{\end{definition}}
\newcommand{\bt}{\begin{theorem}}
\newcommand{\et}{\end{theorem}}
\newcommand{\bp}{\begin{proposition}}
\newcommand{\ep}{\end{proposition}}
\newcommand{\bc}{\begin{corollary}}
\newcommand{\ec}{\end{corollary}} 
\newcommand{\bl}{\begin{lemma}}
\newcommand{\el}{\end{lemma}}
\newcommand{\br}{\begin{remark}}
\newcommand{\er}{\end{remark}}
\newcommand{\Lip}{\mathrm{Lip}}
\DeclareMathOperator{\Id}{Id}
\DeclareMathOperator{\Dist}{Dist}
\title{Random Composition of L-S-V Maps Sampled Over Large Parameter Ranges}
\date{}
\author[1]{Christopher Bose}
\author[1]{Anthony Quas}
\author[2]{Matteo Tanzi}
\affil[1]{Department of Mathematics and Statistics,
University of Victoria,
PO BOX 1700 STN CSC,
Victoria, B.C.,
Canada V8W 2Y2}
\affil[2]{Courant Institute of Mathematical Sciences, New York University, New York, NY 10012, USA}
\begin{document}
\maketitle

\begin{abstract}
Liverani-Saussol-Vaienti (L-S-V) maps form a family of piecewise differentiable dynamical 
systems on $[0,1]$ depending on one parameter $\omega\in\R^+$. These maps  
are everywhere expanding apart from a neutral fixed point. It is well known that 
depending on the amount of expansion close to the neutral point, they 
have either an absolutely continuous invariant probability measure and polynomial decay of 
correlations ($\omega <1$), or a unique physical measure that is singular and 
concentrated at the neutral point ($\omega >1$).  In this paper, we study the 
composition of L-S-V maps whose parameters are randomly sampled from a 
range in $\R^+$, and where these two contrasting behaviours are 
mixed. We show that if the parameters $\omega<1$ are sampled with positive 
probability, then the stationary measure of the random system is absolutely 
continuous; the annealed decay rate of correlations is close (or in some cases equal) 
to the fastest rate of decay among those of the sampled systems; and suitably 
rescaled Birkhoff averages converge to limit laws. In contrast to previous studies where $\omega \in [0,1]$, we allow $ \omega >1$ in our sampling distribution.  We also show that one can obtain similar decay of correlation rates for $\omega \in \R^+$,  when sampling is done with respect to a family of smooth, heavy-tailed distributions.
\end{abstract}
\section{Introduction}

Let us consider the one-parameter family of L-S-V maps \cite{liverani1999probabilistic} 
$\{f_\omega\}_{\omega\in\R^+}$,  $f_\omega:[0,1]\rightarrow[0,1]$, defined as
\begin{equation}\label{Eq:LSV}
f_\omega(x)=\left\{\begin{array}{cr}
x(1+2^\omega x^\omega)&x\in[0,\tfrac 12)\\
2x-1& x\in[\tfrac 12,1]
\end{array}\right.
\end{equation}
These maps were introduced as a simplified version of the previously studied Pomeau-Manneville family
$x \rightarrow x + x^{1+\omega}(\textnormal{mod} \,1)$ \cite{pomeau-manneville}, retaining the essential property of having a neutral fixed point at $x=0$ but all $f_\omega$ having the same two intervals of monotonicity and uniformly expanding, affine, right-hand branch\footnote{It is reasonable to expect that the results one can obtain for L-S-V maps hold for the original P-M maps.  However we take advantage of the affine second branch in L-S-V at various points in the proofs so extending our arguments to the non-affine case is likely to be technically complicated}.  Since then,  L-S-V maps have become a standard example of dynamical systems with 
intermittency, alternating stretches of time where orbits exhibit chaotic behaviour, 
and long stretches of time where they are almost constant and close to zero. 
The uniform expansion of the maps away from the fixed point $x=0$ is responsible 
for the chaotic behaviour, while the fact that $f'_\omega(0)=1$ implies that it takes
a long time before orbits can escape the vicinity of $x=0$. It is well known that for 
$\omega\in (0,1)$, $f_\omega$ has a unique physical probability measure which is absolutely 
continuous and  exhibits polynomial decay of correlations where the rate of  
decay depends on $\omega$ \cite{Young}. For $\omega\ge 1$, $f_\omega$ does 
not have any invariant absolutely continuous  probability measure, but  it has a unique invariant 
absolutely continuous \emph{infinite} measure, and the Dirac delta concentrated at zero
is the unique probability physical measure. This is due to the fact that for $\omega\ge 1$, 
the repulsion in a neighbourhood of zero is so small that, asymptotically, almost every 
orbit spends most of its time arbitrarily close to zero.  

After fixing a compact interval $[\alpha,\beta]\subset \R^+$ with $\alpha\leq\beta$, 
we study the composition of maps sampled randomly with respect to a given probability 
measure $\nu$ on $[\alpha,\beta]$, and characterise the average statistical properties of their orbits 
(\emph{annealed} results). The case where $[\alpha,\beta]\subset (0,1]$ has been 
previously considered. In  \cite{BoseBahsoun} and \cite{bahsoun2016mixing} the 
authors show that if the distribution is discrete and samples only finitely many values 
in $[\alpha,\beta]$, the annealed correlations decay at a rate equal to that of the 
system with the smallest $\omega\in[\alpha,\beta]$ which is sampled with positive 
probability. Other works deal with \emph{quenched} results that, in contrast with 
annealed results, establish decay of correlations and convergence to limit laws for 
almost every sequence of maps sampled with respect to the given measure on 
$[\alpha,\beta]\subset (0,1]$ \cite{bahsoun2017quenched}  (see also \cite{baladi2002almost} for  quenched results on random composition of unimodal maps). In \cite{aimino2014polynomial}, \cite{nicol2018central}, and  \cite{nicol2019large}
an arbitrary composition of maps (sequential random systems) from $[\alpha,\beta]\subset (0,1)$, where $\alpha$ and $\beta$ satisfy some additional technical conditions, is shown to give decay of correlations at a 
speed bounded above by that of the system $\beta$,   satisfy central limit theorems, and large deviations principles.   

In contrast with the above cited works, we  focus on the case where $\alpha<1<\beta$ 
and thus maps with both $\omega<1$ and $\omega>1$ are composed. In this case, there 
is competition between two contrasting behaviours: maps with $\omega<1$ tend to 
spread mass over the whole space, while maps with $\omega>1$, although still 
expanding on most of the space, tend to accumulate mass at the neutral fixed point. 
We show that it is enough for maps with $\omega<1$ to be sampled 
with positive probability, to ensure that the average random system has an absolutely continuous 
stationary probability measure, polynomial decay of  annealed correlations, and convergence to limit laws. 

The case of discrete $\nu$ (i.e. $\nu=\sum p_i\delta_{\omega_i}$) was
previously treated in \cite{bahsoun2016mixing} for $\beta\leq 1$ using the 
Young tower approach \cite{Young} to find estimates on the decay of correlations. 
In this analysis, the base of the skew product is conjugated to a piecewise affine and
uniformly expanding system. This allows one, after inducing  on a suitable subset of the 
phase space, to reduce the analysis to the study of a Gibbs-Markov system \cite{aaronson2001local} 
with countably many invertible branches. This construction is not possible in the 
case of a non-discrete $\nu$ where one can find uncountably many inverse branches 
all defined on measurable sets of zero measure that cover a set of positive measure, 
thus obstructing the construction of a countable Gibbs-Markov structure.

Our main approach in this paper is based on the renewal theory of operators  which was introduced by Sarig
\cite{sarig2002subexponential}  and further developed by Gou\"ezel \cite{gouezel2004sharp}), and which  can deal with any 
measure $\nu$ on $[\alpha,\beta]\subset \R^+$. For the machinery to work, it is crucial  to bound  the 
distortion of the composition of different maps with parameters chosen 
arbitrarily from $[\alpha,\beta]$.  For a single (deterministic) map this was done by Young \cite{Young} using a direct calculation, valid for all $\omega>0$.    For random maps, distortion estimates have been obtained  
in the case $\beta\le 1$ using the Koebe principle and 
non-positive Schwartzian derivative  as in  \cite{Gouezel_random_skew} and \cite{BoseBahsoun}. However, this technique for random maps fails when $\beta >1$ since $f_\omega$ has points where the Schwartzian derivative is positive when $\omega >1$. In the following we give a  direct estimate 
of the distortion, more in the spirit of \cite{Young}, that encompasses the general case 
$[\alpha,\beta]\subset \R^+$,  including $\beta>1$.  

 In the case where the interval of parameters is unbounded, i.e. $\beta=\infty$, the  approach above does not work as a uniform bound on the distortion is missing. However, in the special situation where $\nu$ is an absolutely continuous measure on $[\alpha,+\infty)$ with very regular density, we 
present a second approach that exploits the continuous distribution in parameter 
space by looking at the system as a mixture between a diffusion process, and a 
deterministic uniformly expanding system.  The estimates on the (annealed) decay of 
correlations are obtained using the theory of Markov chains with subgeometric 
rates of convergence \cite{tuominen1994subgeometric}.  We are going to present this approach in the case where the distribution of $\nu$ has fat polynomial tails at infinity, meaning that large parameters are sampled with high probability.

\section{Setting and Results}
Let us consider the one-parameter family of L-S-V maps
$\{f_\omega\}_{\omega\in\R^+}$ as in \eqref{Eq:LSV} above.  Given
$\alpha,\beta\in\R^+$ with $\alpha<1$ and $\beta>\alpha$, we consider a
probability measure $\nu$ on the compact interval $[\alpha,\beta]$,
and $\nu^{\N_0}$ the product measure on
$\Omega:=[\alpha,\beta]^{\N_0}$. We assume without loss of generality 
that $\alpha$ belongs to the topological support of $\nu$ (that is
$\nu([\alpha,\alpha+\delta])>0$ for any $\delta>0$).

These data define the random
dynamical system taking skew-product form $F:\Omega\times
[0,1]\rightarrow \Omega\times [0,1]$
\begin{equation}\label{Eq:ContSkew}
F(\omega,x)=(\sigma\bo\omega, f_{\omega_0}x),\quad 
\bo\omega=(\omega_0\omega_1...)\in\Omega, \;x\in[0,1]
\end{equation}
where the reference measure on $\Omega\times [0,1]$ is $\mb
P=\nu^{\N_0}\otimes m$, with $m$ the Lebesgue measure on
$[0,1]$. Given $\bo\omega\in\Omega$ and $n\in\N$, we denote by
$f_{\bo\omega}^n=f_{\omega_{n-1}}\circ ...\circ f_{\omega_0}$.  We are
going to describe the annealed statistical properties of this random
system, i.e. the statistical properties of $F$ averaged with respect
to the reference measure $\nu^{\N_0}$.  In the following, for a measurable
$A\subset\Omega\times[0,1]$ we denote by 
\[\tau_A(\bo
\omega,x):=\inf\{n\in\N|\; F^n(\bo \omega,x)\in A \},
\] and for a
measurable $J\subset [0,1]$ we denote with an abuse of notation
\[\tau_J(\bo \omega,x):=\inf\{n\in\N|\; F^n(\bo \omega,x)\in
\Omega\times J \}.\]
 
\paragraph{Decay of correlations.}
We show that the random system has
an absolutely continuous stationary probability measure and exhibits decay of
correlations at polynomial speed.

\begin{theorem}\label{Thm:DecCorr}
Let $0<\alpha<1$, $\alpha\le \beta$ and let $\nu$ be a probability measure on
$[\alpha,\beta]$ such that $\alpha$ lies in the topological support of $\nu$. 
Then there is an absolutely continuous stationary
measure $\pi$, i.e. $\nu^{\N_0}\otimes \pi$ is invariant under $F$.
For $\psi\in L^\infty([0,1],m)$, $\phi\in \Lip([0,1],\R)$ and all $1<\gamma<\frac 1\alpha$, we have
\[
\left|\int \psi\circ F^n(\bo\omega,x)\phi(x) d\mb P(\bo\omega,x)-\int
\phi(x)dm(x)\int\psi(x)d\pi(x)\right|\leq
\mc O\left(\frac{1}{n^{\gamma-1}}\right).
\]
\end{theorem}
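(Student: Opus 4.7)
The proof goes via the operator renewal technique of Sarig \cite{sarig2002subexponential} and Gou\"ezel \cite{gouezel2004sharp}, inducing on the ``good set'' $\hat Y := \Omega\times Y$ with $Y := [\tfrac12,1]$, where every L-S-V map is uniformly expanding. The annealed Perron--Frobenius operator $L$ of $F$ is defined on densities on $[0,1]$ by $L\phi = \int \mc L_\omega\phi\, d\nu(\omega)$, and one has the identity $\int \psi\circ F^n\,\phi\, d\mb P = \int \psi\,(L^n\phi)\, dm$. The operator $L$ decomposes through the first-return time $\tau_Y$ as a series of ``return blocks'' $R_k$ associated to $\{\tau_Y = k\}$; Gou\"ezel's renewal theorem then converts a polynomial tail bound on $\mb P(\tau_Y > k)$ into polynomial decay of $L^n$ against the one-dimensional projection onto the invariant density $h_\pi$.

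\textbf{Distortion step.} The first and most delicate task is a uniform multiplicative distortion bound for compositions $f_{\bo\omega}^n$ restricted to each monotone inverse branch of $Y$, valid for arbitrary $\bo\omega\in\Omega$, crucially including indices with $\omega_j>1$. Since the Schwarzian/Koebe route used in \cite{Gouezel_random_skew,BoseBahsoun} fails for $\omega>1$, I would follow Young's single-map estimate \cite{Young} and sum the one-step contributions $\sum_j |\log f'_{\omega_j}(x_j)-\log f'_{\omega_j}(y_j)|$ along corresponding orbits. The key inputs are the explicit form \eqref{Eq:LSV} near $0$, the fact that the right branch $2x-1$ is the same affine map for every $\omega$ (so the cylinder structure is preserved under random sampling), and the geometric growth of the branch lengths once the orbit leaves a fixed neighbourhood of $0$; together these convert the telescoping sum into a bound depending only on $\alpha,\beta$.

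\textbf{Induced system, tail estimate, and renewal conclusion.} With bounded distortion, the induced random skew product $\hat F:\hat Y\to\hat Y$ is a full-branched Markov map with countably many uniformly expanding branches, and its annealed transfer operator $\hat L$ satisfies a Lasota--Yorke inequality on Lipschitz densities, yielding a unique invariant density $\hat h$ and a spectral gap. The stationary measure $\pi$ on $[0,1]$ is obtained by pulling $\hat h$ back over the tower levels $\{\tau_Y>k\}$, with absolute continuity coming from the polynomial tail of $\tau_Y$. That tail, $\mb P(\tau_Y>k)=\mc O(k^{-\gamma})$ for every $1<\gamma<1/\alpha$, is obtained by a direct calculation on \eqref{Eq:LSV}: orbits remain outside $Y$ only by lingering near $0$, and the slowest escape rate is governed by parameters near $\alpha$, which receive positive $\nu$-mass in every neighbourhood since $\alpha$ is in the support of $\nu$. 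Combined with the distortion bound this gives $\|R_k\|=\mc O(k^{-\gamma})$, and Gou\"ezel's renewal theorem then produces $\|L^n\phi-(\int\phi\, dm)\,h_\pi\|_{L^1}=\mc O(n^{-(\gamma-1)})$ for Lipschitz $\phi$. Pairing with $\psi\in L^\infty$ yields the stated annealed decay.

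\textbf{Main obstacle.} The central difficulty is the distortion step. The neat positive-Schwarzian argument is no longer available once $\omega>1$ is allowed, so one must make a concrete, orbit-by-orbit accounting using only the explicit formula for $f_\omega$. Without a uniform distortion bound, the induced operator $\hat L$ would not act nicely on Lipschitz densities, and neither the spectral gap on $\hat Y$ nor the polynomial renewal rates would follow; a secondary, minor, subtlety is ensuring that the Lasota--Yorke constants for $\hat L$ remain uniform in $\bo\omega$, which here is essentially automatic because all maps share the same affine right branch and hence a common Markov partition for the induced dynamics.
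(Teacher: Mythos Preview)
Your proposal is correct and follows essentially the same route as the paper: a direct Young-style distortion bound valid for all $\omega\in[\alpha,\beta]$ (Section~\ref{Sec:BndDist}), the tail estimate $\mb P_Y(\tau_Y>n)=\mc O(n^{-\gamma})$ for every $\gamma<1/\alpha$ via a Hoeffding concentration argument (Section~\ref{Sec:EstReturn}), and Gou\"ezel's operator-renewal theorem applied to the annealed induced transfer operator acting on Lipschitz functions over $[\tfrac12,1]$ (Section~\ref{Sec:DecayRen}). The one point where the paper is more careful than your sketch is that for non-discrete $\nu$ the induced skew product $F_Y$ on $\Omega\times[\tfrac12,1]$ is \emph{not} a countable-branch Gibbs--Markov map---the inverse branches depend on $\bo\omega$ and form an uncountable partition---so the Lasota--Yorke inequality, spectral gap, and aperiodicity for $R(1)=P_Y$ are proved by hand from the distortion bound rather than quoted from standard Gibbs--Markov machinery; your annealed framing with $\hat L$ on functions of $x$ alone effectively accomplishes the same thing, but your description ``countably many uniformly expanding branches'' is only accurate fibrewise.
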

It is natural here to compute correlation integrals with respect to the reference measure $\mb P$ since one does not, in general, know $\pi$ explicitly. 
Note that $\int 
\phi(x)dm(x)= \int \phi (x) d\mb P(\bo\omega,x)$ so, heuristically, we interpret this result as weak convergence of the sequence 
$\psi\circ F^n$ to $\int\psi(x)d\pi(x)$, measured against Lipschitz test functions. 
An elementary calculation shows $\mb P$ (and $m$) above can be replaced by the stationary measure 
$\nu^{\N_0}\otimes \pi$ in both integrals, obtaining the same decay rates with respect to the stationary measure, more in line with results in classical probability.

We prove this theorem using the renewal theory for
operators (see Section \ref{Sec:DecayRen}). This approach is more
standard in the study of dynamical systems with nonuniform hyperbolic
properties, and relies on the bound on distortion given in Section
\ref{Sec:BndDist}. However, we have to deal with the fact that the induced map we are going to study,
$F_Y:= F^{\tau_Y}$ with $Y:=\Omega\times[1/2,1]$, is not Gibbs-Markov, as is usually assumed to be
the case. In fact, there is an uncountable partition of sets with zero
measure that are mapped bijectively onto $Y$ by $F_Y$ and that cover a
set of positive measure. Therefore, we have to prove the spectral
properties of the operators involved directly.

In Section \ref{sec:diffus} we consider the case where $\nu$ is absolutely
continuous with a power law distribution, and prove a decay of correlations statement
for bounded measurable functions. We look at the diffusion process induced by the
skew product map $F$ on the vertical fibre $[0,1]$ and use the
theory of Markov chains with subgeometric rates of convergence to
their stationary state. This approach does not use the bound on the
distortion, whose role in the arguments is played by the diffusion
and gives decay of correlations for arbitrary $L^\infty$ functions.

For $0<\alpha<1$ and $\epsilon>0$, let $\nu_{\alpha,\epsilon}$ denote the measure
on $[\alpha,\infty)$ defined by 
$$
\nu_{\alpha,\epsilon}(A)=\int_A \frac{\epsilon\alpha^\epsilon}{t^{\epsilon+1}}\,dt,
$$
for any $A\subset [\alpha,\infty)$. Alternatively, $\nu_{\alpha,\epsilon}$ is characterized by
$\nu_{\alpha,\epsilon}(t,\infty)=(\frac t\alpha)^{-\epsilon}$ for any $t\ge \alpha$.

\begin{theorem}[$L^\infty$ decay of correlations for power law parameter distributions]
\label{thm:Linftydecorr}
Let $0<\alpha<1$ and let $\epsilon>0$ and consider the random composition of LSV maps where
the parameters are chosen i.i.d.~from $\nu_{\alpha,\epsilon}$. The corresponding Markov chain has
a stationary probability distribution, $\pi$. 
Let $\phi,\psi\in L^\infty[0,1]$. Then for all $1<\gamma<\frac1\alpha$,
$$
\left|
\int \int \psi\circ F^n(\bo\omega,x)\phi(x) \,dm(x)\,d\nu_{\alpha,\epsilon}^{\N}(\bo\omega)-
\int \phi(x)\,dm(x) \int \psi(y)\,d\pi(y)\right|=\textnormal{o}\left(\frac{1}{n^{\gamma-1}}\right).
$$
\end{theorem}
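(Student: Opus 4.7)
My plan is to reinterpret the skew product as a Markov chain $(X_n)_{n\ge 0}$ on $[0,1]$ with one-step transition operator $Pg(x)=\int g(f_\omega x)\,d\nu_{\alpha,\epsilon}(\omega)$, and to reduce the theorem to polynomial ergodicity of this chain. Indeed, for $\phi,\psi\in L^\infty([0,1])$,
\[
\Big|\int\!\!\int \psi\circ F^n(\bo\omega,x)\,\phi(x)\,dm(x)\,d\nu_{\alpha,\epsilon}^{\N}(\bo\omega) -\int \phi\,dm\int\psi\,d\pi\Big|\le \|\phi\|_\infty\|\psi\|_\infty\int_0^1 \|P^n(x,\cdot)-\pi\|_{TV}\,dm(x),
\]
so it suffices to bound the integrated total variation by $o(n^{-(\gamma-1)})$. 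I would first verify that $P$ is $\psi$-irreducible and aperiodic with $Y=[1/2,1]$ a small set: for each $x\in(0,1/2)$ the map $\omega\mapsto f_\omega(x)$ is smooth and strictly monotone on $[\alpha,\infty)$, so averaging over $\nu_{\alpha,\epsilon}$ produces an absolutely continuous component on a subinterval of $(x,1/2)$, while every orbit reaches $Y$ in a bounded number of steps irrespective of the sampled parameters.

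The heart of the proof is a Foster-Lyapunov drift condition with subgeometric rate, using $V(x)=x^{-c}$ for $c\in(\gamma\alpha,1)$ to be tuned. Writing $f_\omega(x)=x(1+(2x)^\omega)$ on $[0,1/2)$ and expanding,
\[
V(f_\omega x)-V(x)=x^{-c}\big[(1+(2x)^\omega)^{-c}-1\big]\le -c_1\, x^{-c}(2x)^\omega
\]
for a universal $c_1>0$ and $x$ small. A Laplace-type computation against the Pareto density gives, as $x\to 0$,
\[
\int_\alpha^\infty (2x)^\omega\,d\nu_{\alpha,\epsilon}(\omega)\asymp \frac{(2x)^\alpha}{|\ln(2x)|},
\]
the leading contribution coming from $\omega$ close to $\alpha$. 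Combining these yields a subgeometric drift $PV\le V-\phi(V)+b\,\mathbf 1_Y$ with rate function $\phi(v)\asymp v^{(c-\alpha)/c}/\log v$.

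I would then invoke a subgeometric ergodicity theorem of Douc-Fort-Moulines-Soulier or Tuominen-Tweedie type to conclude $\|P^n(x,\cdot)-\pi\|_{TV}\le V(x)\,H(n)$ where $H(n)$ is of order $n^{-(c-\alpha)/\alpha}$ modulo logarithmic corrections. Existence and uniqueness of $\pi$, together with $\int V\,d\pi<\infty$, is a standard consequence of the drift inequality. Since $\int_0^1 V\,dm<\infty$ for any $c<1$, the integrated correlation bound is $O(n^{-(c-\alpha)/\alpha})$ up to log factors; taking $c\in(\gamma\alpha,1)$ arbitrarily close to $1$ gives $O(n^{-(\gamma'-1)})$ for any $\gamma'\in(\gamma,1/\alpha)$, which is $o(n^{-(\gamma-1)})$.

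The main obstacle is obtaining the drift estimate with the correct exponent despite the heavy tail of $\nu_{\alpha,\epsilon}$: large values of $\omega$ give $f_\omega(x)\approx x$, contributing essentially no progress toward $Y$, yet are sampled with non-negligible probability. Isolating the useful contribution from $\omega$ near $\alpha$ in the integral $\int(2x)^\omega d\nu_{\alpha,\epsilon}(\omega)$ is what justifies that the effective escape rate from a neighbourhood of $0$ still matches that of the best single map $f_\alpha$, and thus that the critical exponent $1/\alpha$ on the right-hand side of the theorem is unchanged by extending the support of $\nu$ all the way to $+\infty$. The logarithmic corrections produced here and in the subgeometric rate theorem propagate as sub-polynomial factors, which accounts for the little-$o$ rather than big-$O$: one proves an $O$ bound at some $\gamma'>\gamma$ and absorbs all sub-polynomial slack into $n^{-(\gamma-1)}$.
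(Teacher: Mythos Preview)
Your high-level strategy matches the paper's: reduce to a Markov chain on $[0,1]$ and apply subgeometric ergodic theory (the paper invokes Tuominen--Tweedie; your Douc--Fort--Moulines--Soulier drift formulation is an equivalent route), then absorb logarithmic slack into a slightly smaller exponent to get the little-$o$. Your Laplace estimate $\int(2x)^\omega\,d\nu_{\alpha,\epsilon}(\omega)\asymp(2x)^\alpha/|\log(2x)|$ and the resulting rate exponent $(c-\alpha)/\alpha\to 1/\alpha-1$ as $c\uparrow 1$ are also correct.

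The gap is that $V(x)=x^{-c}$ cannot satisfy the drift inequality on $[\tfrac12,1]$. There the dynamics are deterministic, $P(x,\cdot)=\delta_{2x-1}$, so $PV(x)=(2x-1)^{-c}\to\infty$ as $x\downarrow\tfrac12$ while $V(x)\le 2^c$; no constant $b$ can absorb this, whichever set carries the indicator. For the same reason $Y=[\tfrac12,1]$ is neither small nor petite: from $x$ near $1$ the orbit spends roughly $-\log_2(1-x)$ purely deterministic steps in $[\tfrac12,1]$ before any diffusion occurs, so no uniform minorization can hold over $Y$. (The assertion that every orbit reaches $Y$ in bounded time is also false near $0$; that is the intermittency itself.) The paper confronts exactly this obstruction by abandoning the Lyapunov function and verifying $\sup_{x\in C}\mb E_x[\tau_C^\gamma]<\infty$ directly for a petite set $C=[f_\alpha^{-1}b,b]\subset(0,\tfrac12)$. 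The key technical work is a sequence of stopping-time density estimates showing that when the orbit is launched from $C$ into $[\tfrac12,1]$ and eventually re-enters $[0,b)$, the density of the landing point is \emph{uniformly bounded above}, so that landing arbitrarily close to $0$ is sufficiently rare. A Lyapunov argument could in principle be repaired, but only after inducing on $(0,\tfrac12)$ or redesigning $V$ to respect the doubling on $[\tfrac12,1]$, and one would still need a density argument of this kind to control the induced transition.
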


\paragraph{Convergence to limit laws.} 
Let $\phi:[0,1]\rightarrow \R$ be a Lipschitz function.
Define $\phi_Y:\Omega\times[1/2,1]\rightarrow \R$ by  
\[
\phi_Y(\bo\omega,x):=\sum_{i=0}^{\tau_Y(\bo\omega,x)-1}\phi\circ \pi_2\circ F^i(\bo\omega,x),
\]  
%
 where $\pi_2$ denotes projection to the second coordinate.  

We denote the Birkhoff sums with respect to $F_Y$ by
\[
S^n\psi:=\psi+\psi\circ F_Y+...+\psi\circ F_Y^{n-1}
\]
\begin{theorem}\label{Thm:LimitUnfold}
Let $0<\alpha<1$, $\alpha\neq 1/2$, and $\alpha<\beta$.
Let $\nu$ be a probability measure supported on $[\alpha,\beta]$ with $\alpha$ in the topological 
support of $\nu$. 
Let $\phi$ be a Lipschitz function on $[0,1]$ with $\phi(0)\neq0$.
Then there is a stable law $\mc Z$  a sequence $A'_n$, and a sequence $B'_n$ such that
\[
\lim_{n\rightarrow\infty}\frac{S^n\phi_Y-A'_n}{B'_n}\rightarrow\mc Z
\]
where the convergence is in distribution. Furthermore
\begin{itemize}
\item[i)] if $\alpha<1/2$, then $\mc Z$ is a Gaussian;
\item[ii)] if $\alpha>1/2$, then $\mc Z$ is a stable law with index $1/\alpha$.
\end{itemize}

\end{theorem}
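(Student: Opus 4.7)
The plan is to follow the classical inducing strategy of Aaronson--Denker for limit theorems on polynomially mixing systems, in the form developed for stable laws by Gou\"ezel and by Melbourne--T\"or\"ok, using the operator-renewal machinery built up in Section \ref{Sec:DecayRen}. The idea is to reduce the limit theorem for $S^n\phi_Y$ to one for the Birkhoff sums of the return time $\tau_Y$, and then to read off the limit law from the tail exponent of $\tau_Y$.

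First I would establish the pointwise approximation
\[
|\phi_Y(\bo\omega,x) - \phi(0)\tau_Y(\bo\omega,x)|
\le \Lip(\phi)\sum_{i=0}^{\tau_Y(\bo\omega,x)-1}\pi_2 F^i(\bo\omega,x)
\le K,
\]
with $K$ independent of $(\bo\omega,x)\in Y$. During an excursion the iterates $\pi_2 F^i$ lie in $[0,1/2)$; since $\omega_i\ge\alpha$ and the map $\omega\mapsto(2y)^\omega$ is decreasing for $y<1/2$ one has $f_{\omega_i}(y)\le f_\alpha(y)$ on $[0,1/2]$, so the random orbit is dominated (by monotonicity of each branch) by the deterministic $f_\alpha$-orbit, whose escape iterates have the tail asymptotic $y_j^{(\alpha)}\sim c\, j^{-1/\alpha}$. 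Since $1/\alpha>1$, the sum is uniformly bounded, and therefore $\phi_Y = \phi(0)\tau_Y + \chi$ with $\chi$ bounded on $Y$.

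Second I would establish the tail asymptotic $\mb P(\tau_Y > n) \sim c\, n^{-1/\alpha}$ (with a slowly varying correction if needed). This is the natural random analog of Young's $\mb P_\omega(\tau_Y>n)\sim c_\omega n^{-1/\omega}$ for a single L-S-V map, and it should follow directly from the renewal-operator analysis of Section \ref{Sec:DecayRen}, combined with the assumption that $\alpha$ lies in the topological support of $\nu$: the dominant contribution to long excursions comes from parameter sequences that cluster near $\alpha$, and the distortion bound of Section \ref{Sec:BndDist} is what makes this precise.

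With the tail estimate and the mixing of $F_Y$ both supplied by Section \ref{Sec:DecayRen}, I would invoke the Aaronson--Denker--Gou\"ezel limit theorem for Birkhoff sums over the induced map. When $\alpha<1/2$, $\tau_Y\in L^2$, the bounded remainder $\chi$ is also in $L^2$, and $S^n\phi_Y$ satisfies a classical CLT at scale $B'_n=\sqrt n$, giving a Gaussian limit. When $1/2<\alpha<1$, $\tau_Y$ lies in the domain of attraction of a one-sided stable law of index $1/\alpha\in(1,2)$ at scale $n^\alpha$; the bounded remainder $S^n\chi$ then contributes only $O(\sqrt n)$, which is negligible at this scale, so $S^n\phi_Y$ is asymptotic to $\phi(0)S^n\tau_Y$ and converges after rescaling to $\phi(0)\mc Z$ with $\mc Z$ stable of index $1/\alpha$. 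The hypothesis $\phi(0)\neq 0$ is exactly what rules out degeneracy in case (ii), and the boundary case $\alpha=1/2$ is excluded because it corresponds to the critical regime with logarithmic corrections at scale $\sqrt{n\log n}$. The main obstacle is Step 2: in contrast with the deterministic situation the event $\{\tau_Y>n\}$ is a mixture over all parameter sequences, and one must show that the small-$\omega$ regime dominates its asymptotic measure --- equivalently, that the renewal operator from Section \ref{Sec:DecayRen} has the same power-law tail as the deterministic L-S-V system with parameter $\alpha$.
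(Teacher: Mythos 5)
Your overall architecture (reduce $\phi_Y$ to a return-time observable, establish regular variation of the tail, run Nagaev--Guivarc'h through the induced transfer operator) matches the paper's, but the central reduction in your Step 1 is false, and this breaks the whole argument precisely in the regime the paper cares most about ($\beta>1$).

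\textbf{The uniform bound $\sum_{i<\tau_Y}\pi_2 F^i\le K$ does not hold when $\beta>1$.} The comparison $f_{\omega_i}\le f_\alpha$ on $[0,\tfrac12)$ and the resulting domination $z_i\le y_i^{(\alpha)}$ is correct, but it goes the wrong way for your purposes: it makes the random orbit \emph{smaller} and hence the random excursion \emph{longer} than the $f_\alpha$ excursion, so the bound $z_i\le y_i^{(\alpha)}$ only controls the first $T^{(\alpha)}$ terms of a sum that has $T'\ge T^{(\alpha)}$ terms. To see the failure concretely, take the first point of the excursion $z_1\approx T^{-1/\alpha}$ (so the deterministic $f_\alpha$-orbit escapes in about $T$ steps) and let all sampled $\omega_i$ be close to $\beta$. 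After $T$ steps the random orbit is still at height $\approx T^{-1/\alpha}$, and from there the remaining sum behaves, for the $f_\beta$-orbit, like $\sum_{k\ge 1}2^{-k}\cdot 2^{k\beta}$ truncated at height $z_T$, which is $\asymp z_T^{1-\beta}\asymp T^{(\beta-1)/\alpha}\to\infty$ when $\beta>1$. So $\phi_Y-\phi(0)\tau_Y$ is not bounded, and indeed is not even dominated by anything independent of the excursion depth. (Your claim is correct only for $\beta<1$, the case already covered by \cite{aaronson2001local,bahsoun2016mixing}.)

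\textbf{What the paper does instead.} Rather than a pointwise bounded-remainder decomposition, the paper compares \emph{distributions}: it shows directly (Proposition~\ref{prop:phiYinbasin}, Lemmas~\ref{lem:Intime} and~\ref{lem:rettimeest}) that $1-G_\phi(t)=(1+o(1))\,q_\phi^{-1}(t)$ where $q_\phi(x)=\int_x^{1/2}\phi(t)/[t\int(2t)^\gamma d\nu(\gamma)]\,dt$, and separately that $q_\phi(x)=\phi(0)q_\tau(x)(1+o(1))$ and that both are regularly varying of order $-\alpha$. This uses a Hoeffding-type concentration argument for the number of steps spent in dyadic-like annuli $I_n=[e^{-\sqrt n},e^{-\sqrt{n-1}})$, and is the genuinely hard part --- the thing you flag as the ``main obstacle'' in Step~2 but do not supply. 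Note also that the paper only obtains $1-G(t)=t^{-1/\alpha}L(t)$ with $L$ slowly varying, not $\sim c\,t^{-1/\alpha}$; for a general sampling measure $\nu$ there is no reason for $L$ to converge. Steps 2 and 3 of your plan (Nagaev--Guivarc'h via a spectral gap for the perturbed $P_{Y,t}$, then Theorem~\ref{Thm:GouezUnfold} if unfolding is needed) are the same as the paper's, but they sit on top of the regular-variation input that Step~1 was supposed to, and cannot, deliver via a bounded remainder.
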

These limit laws assuming a compact parameter range are proved using the Nagaev-Guivarc'h approach as
outlined in  \cite{gouezel2004central} and  \cite{Gouezel2}. The spectral properties of the transfer
operator of $F_Y$ will play a crucial role to this end. The case $\alpha=\frac 12$ is not addressed in this work, and is likely to be delicate since even for the deterministic map $f_{\omega = \frac 12}$ the analysis in
\cite{gouezel2004central} (Section 1.3) derives a Gaussian limit under normalization $B_n'\sim \sqrt{n \log n}$ whereas for $\omega > 1/2$ the limit is a stable law similar to the above.  One can therefore expect that when $\alpha = \frac 12$, precise properties of the distribution of $\nu$ near $\frac 12$ will be needed to derive limit theorems in the random case, in contrast to the simpler results stated above. 

For $\phi(0)> 0$ our argument exploits the approximation\footnote{For a more precise statement, see the proof of Theorem 2.3 in Section 5.}  $\phi_Y \approx \tau_Y \cdot \phi(0)$ in distribution.  A similar argument holds when $\phi(0) <0$.   When  $\varphi(0)=0$ the estimate on $\phi_Y$ is more delicate, even in the deterministic case, so we do not consider that case in the current work. 

Finally, we do not consider limit theorems in the case of unbounded parameter range and heavy tails, such as $\nu_{\alpha,\epsilon}$.  Although there may be path to obtain this via the Nagaev-Guivarc'h approach, the details will require a complete reworking of the bounded range case. We suggest this for possible investigation in the future.

\section{Bound on the distortion}\label{Sec:BndDist}

\begin{definition}	
Suppose $f:[0,1]\rightarrow[0,1]$ is a piecewise differentiable map
with $f'> 0$ on any $J\subset[0,1]$ such that $f|_J$ is
differentiable. The distortion of $f$ on $J$ is defined as
\[
\Dist(f|J)=\sup_{x,y\in J}\log\frac{|D_xf|}{|D_yf|}.
\]
\end{definition}
We restrict our attention to $J\subset[0,1/2)$ and for the moment write (abusing notation),
\[
f_\omega^{-1}=(f_{\omega}|_{[0,1/2)})^{-1}.
\]

\begin{proposition}\label{Prop:UnifBndDist}
There is $K>0$ such that for any $\omega\in\Omega=[\alpha,\beta]^{\N_0}$, $I' \subset
[1/2,1]$, and $n\in\N$
\[
\Dist(f^n_{\bo \omega}|(f_{\bo \omega}^n)^{-1}(I'))\leq K\log\frac{\sup I'}{\inf I'}.
\]
\end{proposition}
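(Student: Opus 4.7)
The plan is to bypass the Koebe / Schwarzian-derivative route used in earlier works (which fails here because $Sf_\omega$ takes positive values in $(0,1/2)$ whenever $\omega>1$) in favour of a direct telescoping argument exploiting the algebraic form $f_\omega(x) = x(1 + 2^\omega x^\omega)$. Setting $x_k := f_{\bo\omega}^k(x)$, $y_k := f_{\bo\omega}^k(y)$ for $x, y \in (f_{\bo\omega}^n)^{-1}(I')$ and assuming without loss of generality that $x_k \leq y_k$, the chain rule telescopes the log-ratio of derivatives into $\sum_{k=0}^{n-1}\bigl[\log f'_{\omega_k}(y_k) - \log f'_{\omega_k}(x_k)\bigr]$. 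I would then bound each summand pointwise, using the explicit formula $f'_\omega(\xi) = 1 + (1+\omega)2^\omega\xi^\omega$ together with the elementary inequality $\log(1+u) - \log(1+v) \leq u - v$ for $u \geq v \geq 0$, to obtain the per-step estimate $(1+\omega_k)\cdot 2^{\omega_k}(y_k^{\omega_k} - x_k^{\omega_k}) \leq (1+\beta)\cdot 2^{\omega_k}(y_k^{\omega_k} - x_k^{\omega_k})$.

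The key idea is then a telescoping identity coming directly from the factorisation $f_\omega(x)/x = 1 + 2^\omega x^\omega$: since $2^{\omega_k}y_k^{\omega_k} = y_{k+1}/y_k - 1$ and similarly for $x$, one has
\[
\log(y_{k+1}/x_{k+1}) - \log(y_k/x_k) = \log(1 + 2^{\omega_k}y_k^{\omega_k}) - \log(1 + 2^{\omega_k}x_k^{\omega_k}).
\]
Because $y_k < 1/2$ forces $2^{\omega_k}y_k^{\omega_k} \leq 1$, the right-hand side is bounded below by $\tfrac{1}{2}\cdot 2^{\omega_k}(y_k^{\omega_k} - x_k^{\omega_k})$ (using $\log(1+u) - \log(1+v) \geq (u-v)/(1+u)$). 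Summing collapses the left-hand side telescopically to $\log(y_n/x_n) - \log(y_0/x_0) \leq \log(y_n/x_n) \leq \log(\sup I'/\inf I')$, because $x_n, y_n \in I'$. Combining with the per-step bound yields the proposition with $K = 2(1+\beta)$.

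The main challenge is conceptual rather than computational: one needs to find a summation device that is robust to the random mixing of parameters $\omega_k$, in the absence of the classical Schwarzian-derivative tools. The identity $f_\omega(x)/x = 1 + 2^\omega x^\omega$ provides exactly this, expressing $2^{\omega_k}(y_k^{\omega_k} - x_k^{\omega_k})$ as a non-negative increment of the telescoping sequence $\log(y_k/x_k)$ regardless of the value of $\omega_k$, and making the estimate uniform over $\bo\omega \in [\alpha,\beta]^{\N_0}$, including sequences that mix $\omega_k < 1$ and $\omega_k > 1$.
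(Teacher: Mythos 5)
Your proof is correct, and it hinges on exactly the same structural device as the paper's argument: the potential $(1+\beta)\log(y_k/x_k)$ telescopes, and each per-step distortion increment is absorbed by the per-step increase of this potential. Where the paper packages this into a one-step lemma, $\mathcal D(f_\omega^{-1}J) + \Dist(f_\omega|f_\omega^{-1}J) \le \mathcal D(J)$ with $\mathcal D(J)=(1+\beta)\log(\sup J/\inf J)$, proved by checking monotonicity of $g(x)=(1+x)^{1+\beta}/(1+(\omega+1)x)$, you instead split the per-step inequality into two elementary bounds: the upper bound $\log f'_{\omega_k}(y_k)-\log f'_{\omega_k}(x_k)\le (1+\beta)\,2^{\omega_k}(y_k^{\omega_k}-x_k^{\omega_k})$ from concavity of $\log(1+\cdot)$, and the lower bound $\log(y_{k+1}/x_{k+1})-\log(y_k/x_k)\ge \tfrac12\,2^{\omega_k}(y_k^{\omega_k}-x_k^{\omega_k})$ using $2^{\omega_k}y_k^{\omega_k}\le 1$ (which is where the crucial constraint $y_k<\tfrac12$ for $k<n$ enters). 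Your route is a bit more transparent and avoids the auxiliary function $g$, at the cost of a slightly larger constant, $K=2(1+\beta)$ versus the paper's $K=1+\beta$; this is immaterial for the application. Both arguments share the essential feature the paper emphasises: they are insensitive to whether individual $\omega_k$ are below or above $1$, and so bypass the Schwarzian-derivative machinery that breaks down when $\beta>1$.
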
  
As an immediate corollary to the previous proposition we obtain:
\begin{corollary}\label{Cor:Dist}
There is $K'>0$ such that for any $\bo \omega\in\Omega$ and interval
$[x,y]\subset[0,1)$ with $f_{\bo \omega}^n$, mapping $[x,y]$
bijectively to $f_{\bo \omega}^n([x,y])\subset [1/2,1)$
  \[
  \Dist(f^n_{\bo \omega}|[x,y])\leq K'|f_{\bo \omega}^n(x)-f_{\bo \omega}^n(y)|.
  \]
\end{corollary}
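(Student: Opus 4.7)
The plan is to derive the corollary as a short consequence of Proposition \ref{Prop:UnifBndDist} together with one elementary estimate that turns a logarithmic ratio into a linear difference. The observation is that $[x,y]$ is, by the bijectivity hypothesis, a connected component of the preimage under $f^n_{\bo\omega}$ of the interval $I':=f^n_{\bo\omega}([x,y])$, and by hypothesis $I'\subset[1/2,1)$. Hence Proposition \ref{Prop:UnifBndDist} applies on $[x,y]$ and yields
\[
\Dist(f^n_{\bo\omega}|[x,y])\leq K\log\frac{\sup I'}{\inf I'}.
\]

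Next I would exploit that $\inf I'\geq 1/2$. Using the elementary inequality $\log(b/a)\leq (b-a)/a$ valid for $0<a\leq b$, applied with $a=\inf I'$ and $b=\sup I'$, gives
\[
\log\frac{\sup I'}{\inf I'}\leq\frac{\sup I'-\inf I'}{\inf I'}\leq 2\,(\sup I'-\inf I')=2\,|f^n_{\bo\omega}(y)-f^n_{\bo\omega}(x)|,
\]
where in the last step I use that $f^n_{\bo\omega}$ is monotone on $[x,y]$, so the length of its image equals $|f^n_{\bo\omega}(y)-f^n_{\bo\omega}(x)|$. Setting $K':=2K$ finishes the proof.

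There is essentially no obstacle here: the heavy lifting is all in Proposition \ref{Prop:UnifBndDist}. The only small point to double-check is that the statement of the proposition is interpreted on a single connected component of the preimage (which must be the case under the given definition of distortion, since differentiability of $f^n_{\bo\omega}$ on the whole preimage is needed), and that the extension from $[x,y]\subset[0,1/2)$ to $[x,y]\subset[0,1)$ is free: on any maximal subinterval where the orbit passes through $[1/2,1)$ the map $f_{\omega_i}(z)=2z-1$ is affine and contributes nothing to the distortion sum, so the inequality in the proposition is unaffected by such excursions. Once these points are noted, the corollary is immediate with constant $K'=2K$.
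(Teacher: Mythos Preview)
Your approach is correct and genuinely more direct than the paper's. The paper does \emph{not} apply Proposition~\ref{Prop:UnifBndDist} once to the whole interval $[x,y]$; instead it splits the orbit at the successive return times $0=n_0<n_1<\cdots<n_k=n$ to $[1/2,1]$, applies Proposition~\ref{Prop:UnifBndDist} (together with the same log-to-linear conversion you use) to each block to obtain $\Dist\le K''|f^{n_i}_{\bo\omega}(x)-f^{n_i}_{\bo\omega}(y)|$, and then sums these using the geometric bound $|f^{n_i}_{\bo\omega}(x)-f^{n_i}_{\bo\omega}(y)|\le 2^{-(k-i)}|f^{n}_{\bo\omega}(x)-f^{n}_{\bo\omega}(y)|$ coming from the expansion of the right branch. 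Your one-shot application eliminates this block decomposition and the geometric sum entirely.

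There is, however, a small gap in your justification for extending Proposition~\ref{Prop:UnifBndDist} to orbits with right-branch excursions. As stated, the proposition takes $(f^n_{\bo\omega})^{-1}$ to be the composition of \emph{left-branch} inverses only. Saying the affine branch ``contributes nothing to the distortion sum'' is true but not by itself sufficient: the proof of the proposition works by telescoping the inequality $\mc D(J_{i+1})\ge \mc D(J_i)+\Dist(f_{\omega_i}|J_i)$ from Lemma~3.3, so for a right-branch step $g(z)=2z-1$ you need $\mc D(g(J))\ge\mc D(J)$, not merely $\Dist(g|J)=0$. This does hold, since for $J=[a,b]\subset(1/2,1]$ one has $(2b-1)/(2a-1)\ge b/a$; and the bijectivity hypothesis together with $I'\subset[1/2,1)$ rules out $1/2\in J_i$ at any intermediate time (otherwise the orbit would be trapped at $0$). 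With this one-line check added, the telescoping in the proof of Proposition~\ref{Prop:UnifBndDist} goes through for mixed branches and your argument is complete with $K'=2K$.
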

\begin{proof}
  There is $k\in\N$ and numbers $0=n_0<n_1<n_2<...<n_k=n$ where $n_i$
  is the $i$-th return of all points in $[x,y]$ to the set $[1/2,1]$. From Proposition
  \ref{Prop:UnifBndDist}, we get that for every $0\le i\le k-1$
\[
\Dist(f^{n_{i}-n_{i-1}}_{\sigma^{n_{i-1}}\bo \omega}|f^{n_{i-1}}_{\bo \omega}([x,y]))\leq K''|f^{n_{i}}_{\bo\omega}(x)-f^{n_{i}}_{\bo\omega}(y)|.
\]
Since $|f^{n_{i}}_{\bo\omega}(x)-f^{n_{i}}_{\bo\omega}(y)|\leq
2^{-(k-i)}|f^{n}_{\bo\omega}(x)-f^{n}_{\bo\omega}(y)|$ one gets
\begin{align*}
  \Dist(f^n_{\bo \omega}|[x,y])&\leq
  \sum_{i=1}^{k}\Dist(f^{n_{i}-n_{i-1}}_{\sigma^{n_{i-1}}\bo
    \omega}|f^{n_{i-1}}_{\bo \omega}([x,y]))\\ &\leq
  K''\sum_{i=1}^{k}2^{-(k-i)}|f^{n}_{\bo\omega}(x)-f^{n}_{\bo\omega}(y)|\\ &\leq
  2K''|f^{n}_{\bo\omega}(x)-f^{n}_{\bo\omega}(y)|.
\end{align*}
\end{proof}
To prove the proposition we use the following lemma.
 
\begin{lemma}
For any closed interval $J\subset(0,1]$ define 
\[
\mc D(J)=(1+\beta)\log\frac{\sup J}{\inf J}.
\]
Then,
\begin{equation}\label{Eq:Lem1}
 \max_{\omega\in[\alpha,\beta]}\left\{\mc
 D(f_\omega^{-1}(J))+\Dist(f_\omega|f_\omega^{-1}(J))\right\}\le \mc
 D(J).
\end{equation}
\end{lemma}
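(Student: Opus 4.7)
The plan is to reduce the inequality \eqref{Eq:Lem1} to the monotonicity of an explicit one-variable function. Writing $J = [a,b]$ with $0 < a \leq b \leq 1$ and $[a', b'] := f_\omega^{-1}(J) \subset [0, 1/2)$, I would first compute $f'_\omega(x) = 1 + 2^\omega(1+\omega)x^\omega$ and observe that this is strictly increasing on $[0,1/2)$. Hence the sup in the definition of $\Dist$ is attained at the endpoints, giving $\Dist(f_\omega|[a',b']) = \log\bigl(f'_\omega(b')/f'_\omega(a')\bigr)$. Rearranging \eqref{Eq:Lem1}, the lemma is equivalent to
\[
\log \frac{(b')^{1+\beta}\, f'_\omega(b')}{(a')^{1+\beta}\, f'_\omega(a')} \leq \log \frac{f_\omega(b')^{1+\beta}}{f_\omega(a')^{1+\beta}},
\]
i.e.\ the function $h(x) := x^{1+\beta} f'_\omega(x)/ f_\omega(x)^{1+\beta}$ is non-increasing on $(0,1/2)$ for every $\omega \in [\alpha,\beta]$.

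Plugging in the explicit form of $f_\omega$ causes the factors of $x^{1+\beta}$ to cancel, leaving
\[
h(x) = \frac{1 + (1+\omega)\, u}{(1+u)^{1+\beta}}, \qquad u := 2^\omega x^\omega.
\]
Since $u$ is strictly increasing in $x$, I only need to show that the one-variable function $H(u) := (1 + (1+\omega)u)/(1+u)^{1+\beta}$ is non-increasing for $u \geq 0$. A short differentiation produces
\[
H'(u) = \frac{(\omega - \beta) - (1+\omega)\beta\, u}{(1+u)^{2+\beta}},
\]
whose numerator is non-positive precisely because $\omega \leq \beta$ and $u \geq 0$.

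The conceptual content of the argument is the choice of exponent $1+\beta$ in the definition of $\mc D$: this is exactly what is needed both to cancel the $x^{1+\beta}$ factors in $h$ and to produce the constant term $\omega - \beta$ in the numerator of $H'$, so that the uniform bound $\omega \leq \beta$ kills the derivative. After this reformulation, the remainder is a routine calculus exercise, and I do not anticipate any serious obstacle beyond care with the monotone identification of the left inverse branch and the fact that $\inf J > 0$ keeps everything well-defined.
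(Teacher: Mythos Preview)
Your proof is correct and is essentially the same argument as the paper's: both reduce the inequality at the endpoints to the monotonicity of a single-variable function in $u=2^\omega x^\omega$, and your $H(u)=(1+(1+\omega)u)/(1+u)^{1+\beta}$ is precisely the reciprocal of the paper's $g(u)=(1+u)^{1+\beta}/(1+(\omega+1)u)$. The only cosmetic difference is that you phrase the reduction as ``$h(x)=x^{1+\beta}f'_\omega(x)/f_\omega(x)^{1+\beta}$ is non-increasing'' while the paper computes $\mathrm{LHS}-\mathrm{RHS}=\log\bigl(g(2^\omega a_{-1}^\omega)/g(2^\omega b_{-1}^\omega)\bigr)$ and shows $g$ is non-decreasing; these are equivalent formulations.
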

\begin{proof}
Fix $J=[a,b]$. For  $\omega\in[\alpha,\beta]$
\begin{align*}
\mc D(f_\omega^{-1}(J))+\Dist(f_\omega|f_\omega^{-1}(J))
&=(1+\beta)\log\frac{ f_\omega^{-1}(b)}{ f_\omega^{-1}(a)}+
\log\frac{D_{f_\omega^{-1}(b)}f_\omega}{D_{f_\omega^{-1}(a)}f_\omega}
\end{align*}
where in the estimates above we used the fact that $(f_\omega)'$ is
positive and monotonically increasing.  Calling
$a_{-1}:=f_\omega^{-1}(a)$ and $b_{-1}:=f_\omega^{-1}(b)$ all we need
to show is that
\begin{equation}\label{Eq:Ineq1}
(1+\beta)\log\frac{ b_{-1}}{
    a_{-1}}+\log\frac{D_{b_{-1}}f_\omega}{D_{a_{-1}}f_\omega}\leq
  (1+\beta)\log\frac{f_\omega(b_{-1})}{f_\omega(a_{-1})}.
\end{equation}
The left-hand side equals
\begin{align*}
\mbox{LHS}=\log\left(\left(\frac{b_{-1}}{a_{-1}}\right)^{(1+\beta)}\frac{1+(\omega+1)2^\omega
  b_{-1}^\omega}{1+(\omega+1)2^\omega a_{-1}^\omega}\right)
\end{align*}
and the right-hand side equals
\begin{align*}
\mbox{RHS}=\log\left(\left(\frac{b_{-1}}{a_{-1}}\right)^{(1+\beta)}\left(\frac{1+2^\omega
  b_{-1}^\omega}{1+2^\omega a_{-1}^\omega}\right)^{(1+\beta)}\right).
\end{align*}
\begin{align*}
\mbox{LHS$-$RHS}&= \log\left(\frac{1+(\omega+1)2^\omega
  b_{-1}^\omega}{1+(\omega+1)2^\omega
  a_{-1}^\omega}\left(\frac{1+2^\omega a_{-1}^\omega}{1+2^\omega
  b_{-1}^\omega}\right)^{(1+\beta)}\right)\\ 
  &=\log\left(\frac{(1+2^\omega
  a_{-1}^\omega)^{(1+\beta)}}{(1+(\omega+1)2^\omega
  a_{-1}^\omega)}\frac{(1+(\omega+1)2^\omega b_{-1}^\omega)}{(1+ 2^\omega
  b_{-1}^\omega)^{(1+\beta)}}\right)
\end{align*}
The fact that LHS$-$RHS$\le0$  follows from the fact that 
\[
g(x)=\frac{(1+x)^{(1+\beta)}}{1+(\omega+1)x}. 
\]
is non-decreasing as an elementary derivative calculation shows,
and the fact that LHS$-$RHS$=\log\frac{g(2^\omega a_{-1}^\omega)}
{g(2^\omega b_{-1}^\omega)}$ with
$b_{-1}\ge a_{-1}$.  This concludes the proof of \eqref{Eq:Ineq1}.
\end{proof}
	
\begin{proof}[Proof of Proposition \ref{Prop:UnifBndDist}] 
Fix $\bo \omega\in\Omega$ and $n\in\N$ and let
$I'$ be a sub-interval of $[\frac 12,1]$. Define 
$J_0=f_{\omega_0}^{-1}\circ\ldots\circ f_{\omega_{n-1}}^{-1}(I')$ and for every $1\leq i\leq n-1$ and
\[
J_{i}:=f_{\bo \omega}^i(J_0)=f_{\omega_{i}}^{-1}\circ...\circ
f_{\omega_{n-1}}^{-1}(I').
\]
From the definition of distortion and monotonicity of $f_\omega$ and
$(f_\omega)'$
\[
\Dist(f_{\bo\omega}^n|J_0)=\sum_{i=0}^{n-1}\Dist(f_{\omega_{i}}|J_{i}).
\]
Repeatedly applying \eqref{Eq:Lem1} 
\begin{align*}
\mc D(I')&\ge \mc D(J_{n-1})+\Dist(f_{\omega_{n-1}}|J_{n-1})\ge
\mc D(J_{0})+\sum_{i=0}^{n-1}\Dist(f_{\omega_{i}}|J_{i})
\end{align*}
which implies that
\[
\sum_{i=0}^{n-1}\Dist(f_{\omega_{i}}|J_{i})\le \mc
D(I')=(1+\beta)\log\frac{\sup I'}{\inf I'}.
\]
\end{proof}

\section{Estimates on the Return Times to the Inducing Set}\label{Sec:EstReturn}
Let $Y=\Omega\times[1/2,1]$ and $\mb P_Y$ the probability measure obtained by
restricting and then normalizing $\mb P$ to $Y$. We use the following condition on the tails of the 
return time to $Y$ in the renewal theory approach
\begin{equation}\label{Eq:Assum1}\tag{{\bf C1}}
\exists \gamma>1\mbox{ such that } \mb P_Y\left(\{(\bo \omega,x)\in
Y:\;\tau_Y(\bo\omega,x)>n\}\right)=\mc O(n^{-\gamma}).
\end{equation}
In the diffusion driven case we are going to use the following,
closely related condition: there is a non-decreasing function
$R:\N\rightarrow \R^+_0$ such that
\begin{equation}\label{Eq:Assum2}\tag{{\bf C2}}
\sum_{n=1}^\infty \mb P_Y\left(\{(\bo \omega,x)\in
Y:\;\tau_Y(\bo\omega,x)=n\}\right)\cdot R(n)<\infty.
\end{equation}

In fact, for $t>1$, if condition \eqref{Eq:Assum1} holds for all $\gamma<t$
then condition \eqref{Eq:Assum2} holds for $R(n)=n^\gamma$ for all $\gamma<t$.
To see this, we use summation by parts, showing
\begin{align*}
\sum_{n=1}^\infty \mb P(\tau=n)R(n)&=
\sum_{n=1}^\infty (\mb P(\tau>n-1)-\mb P(\tau>n))R(n)\\
&=R(1)+\sum_{n=1}^\infty \mb P(\tau>n)(R(n+1)-R(n))
\end{align*}

Notice that, having fixed the family $\{f_\omega\}_{\omega\in
  [\alpha,\beta]}$, \eqref{Eq:Assum1} and \eqref{Eq:Assum2} are
conditions on the measure $\nu$. Sharp bounds for the expressions
in \eqref{Eq:Assum1} and \eqref{Eq:Assum2} have been obtained for
various $\nu$ in \cite{bahsoun2017quenched}. Below we prove the following proposition
\begin{proposition}\label{Prop:As1and2valid}
Assume that $\nu$ is a probability measure supported on $[\alpha,\beta]$ with $0<\alpha<1$ lying
in the topological support of $\nu$. Then conditions
\eqref{Eq:Assum1} and \eqref{Eq:Assum2} hold. In particular
\begin{enumerate}
\item For any $1<\gamma<\frac{1}{\alpha}$,
Condition \eqref{Eq:Assum1} holds and Condition \eqref{Eq:Assum2} holds with 
$R(n)=n^{\gamma}$;
\item if $\nu(\{\alpha\})>0$,
then Condition \eqref{Eq:Assum1} holds with $\gamma=\frac{1}{\alpha}$
and Condition \eqref{Eq:Assum2} holds with $R(n)=n^{1/\alpha}$.

\end{enumerate}
\end{proposition}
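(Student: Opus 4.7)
The plan is to prove the tail estimate \eqref{Eq:Assum1} by coupling the random orbit in $[0,1/2)$ with a purely deterministic LSV iteration whose escape behaviour is well understood, and then to deduce \eqref{Eq:Assum2} from the summation-by-parts identity already recorded in the excerpt. Two facts drive the coupling. First, direct differentiation gives $\partial_\omega f_\omega(y)=2^\omega y^{1+\omega}(\log 2+\log y)<0$ for every $y\in(0,1/2)$, so $\omega\mapsto f_\omega(y)$ is strictly decreasing on the left branch: smaller parameters produce faster escape toward $1/2$. Second, because $\alpha$ lies in the topological support of $\nu$, for every $\delta>0$ we have $p_\delta:=\nu([\alpha,\alpha+\delta])>0$.

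Given $(\bo\omega,x)\in Y$ put $y_1:=2x-1$ and $y_{n+1}:=f_{\omega_n}(y_n)$, and call step $n$ \emph{good} when $\omega_n\in[\alpha,\alpha+\delta]$, writing $G_n\in\{0,1\}$ for its indicator; the $G_n$ are i.i.d.\ Bernoulli$(p_\delta)$ under $\nu^{\N_0}$. Introduce a comparison sequence $z_1:=y_1$, with $z_{n+1}:=f_{\alpha+\delta}(z_n)$ if $G_n=1$ and $z_{n+1}:=z_n$ otherwise. A short induction, using $f_{\omega_n}(y)\ge f_{\alpha+\delta}(y)$ at good steps (parameter monotonicity), $f_{\omega_n}(y)\ge y$ at bad steps (expansivity on $(0,1/2)$), and monotonicity of each $f_\omega$ in $y$, gives $y_n\ge z_n$ as long as the random orbit has not yet returned to $[1/2,1]$. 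Consequently
\[
\tau_Y \;\le\; \min\{n\ge 1:\ N_n^g\ge\tau_{\alpha+\delta}(y_1)\}, \qquad N_n^g:=G_1+\cdots+G_{n-1},
\]
where $\tau_{\alpha+\delta}(y)$ is the deterministic first-escape time of $f_{\alpha+\delta}$ from $[0,1/2)$ starting at $y$.

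Under $\mb P_Y$ the initial $y_1=2x-1$ is Lebesgue-distributed on $[0,1]$; splitting according to whether $\tau_{\alpha+\delta}(y_1)$ exceeds $np_\delta/2$ yields
\[
\mb P_Y(\tau_Y>n)\;\le\;m\{y\in[0,1/2):\ \tau_{\alpha+\delta}(y)>np_\delta/2\}\;+\;\mb P\bigl(N_n^g<np_\delta/2\bigr).
\]
The second term is $\mc O(e^{-cn})$ by Hoeffding's inequality. For the first, the classical single-map LSV bound $m\{y:\tau_\omega(y)>k\}=\mc O(k^{-1/\omega})$ for $\omega\in(0,1)$, which follows from the near-linear growth of $y_k^{-\omega}$ along a $f_\omega$-orbit, delivers $\mc O(n^{-1/(\alpha+\delta)})$. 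Since $\delta>0$ is arbitrary, this proves \eqref{Eq:Assum1} for every $\gamma<1/\alpha$, and hence \eqref{Eq:Assum2} with $R(n)=n^\gamma$ for every such $\gamma$ by the summation-by-parts identity recorded above. When $\nu(\{\alpha\})>0$ the same argument is run at $\delta=0$ with $p:=\nu(\{\alpha\})$ and the comparison dynamics replaced by $f_\alpha$ itself, producing the boundary exponent $\gamma=1/\alpha$. The point that needs the most care is the inductive step $y_n\ge z_n$: one must verify that whenever the comparison variable $z_n$ would first exceed $1/2$ the random orbit has in fact already returned, so that the comparison never falsely signals escape; this is handled cleanly by halting the comparison at the first time $z_n\ge 1/2$ and reading off escape of the random orbit at or before that instant.
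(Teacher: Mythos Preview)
Your argument is correct and follows essentially the same route as the paper's proof: both exploit the monotonicity $\omega\mapsto f_\omega(y)$ on the left branch to compare with a deterministic LSV map of parameter close to $\alpha$, count ``good'' parameters with Hoeffding, and invoke the classical single-map escape rate $c_k\asymp k^{-1/\omega}$. The paper phrases the comparison through the backward preimages $x_n(\bo\omega)=f_{\omega_1}^{-1}\cdots f_{\omega_{n-1}}^{-1}(1/2)$ together with the identity $\mb P_Y(\tau_Y>n)=\tfrac12\mb E[x_n(\bo\omega)]$ and cites \cite{BoseBahsoun} for the bound $x_n(\bo\omega)\le \lfloor M\rfloor^{-\gamma}$ once $M$ good steps have occurred, whereas you run the equivalent forward coupling; the two are dual formulations of the same estimate, and your version has the minor advantage of being self-contained rather than citing the external lemma.
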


Conditions \eqref{Eq:Assum1} and \eqref{Eq:Assum2}
concern the return times of orbits to the inducing set under the map
$F$. They can be verified on a case-by-case basis in the following
way (see also \cite{bahsoun2017quenched}).  Call $f_{\omega,1}:=
f_\omega|_{[0,1/2)}$ the first invertible branch of $f_\omega$, and
$g:=2x-1$ mod 1 on $[1/2,1]$ the second branch common to all the
maps in the family. Define
$x_{n}(\bo\omega):=f^{-1}_{\omega_{1}}...f^{-1}_{\omega_{n-1}}(1/2)$. Then
$\{(\bo \omega,x)\in Y:\;\tau_Y(\bo\omega,x)>n\}=\{(\bo \omega,y):
y\in[1/2,g^{-1}(x_n(\bo\omega))]\}.$ 

This implies that 
\begin{equation}\label{Eq:EstTailstauY}
\mb P_Y((\bo \omega,x)\in Y\colon \tau_Y(\bo\omega,x)>n)=\tfrac 12\mb E [x_n(\bo\omega)],
\end{equation}
where the expectation $\mb E$ is with respect to
$\nu^{\N}$. 
The equality above together with computations as in \cite{BoseBahsoun}
allow us to prove Proposition \ref{Prop:As1and2valid}.

\begin{proof}[Proof of Proposition \ref{Prop:As1and2valid}]
From the assumptions, $\alpha:=\min
\mbox{supp}(\nu)>0$. Let $1<\gamma<\frac 1\alpha$,
so that from the definition of topological support, 
$p_0:=\nu([\alpha,\gamma^{-1}])>0$. It is known that if
$\bo\omega$ is a sequence such that
$\sum_{j=1}^n\bo1_{\{\omega_j\in[\alpha,\gamma^{-1}]\}}>M$,
then $x_n(\bo\omega)\leq \floor{M}^{-\gamma}$ (see for example \cite{BoseBahsoun}) and is a consequence of
the monotonicity found in the family of functions
$\{f_\omega\}_{\omega\in\R^+}$. It follows from the Hoeffding concentration
inequality for i.i.d. and bounded random variables that
\[
\nu^{\N}\left(\bo\omega:\;
\frac{1}{n}\sum_{i=1}^{n}\bo 1_{\{\omega_i\in[\alpha,\gamma^{-1}]
  \}}<p_0-\epsilon\right)\leq e^{-2n\epsilon^2}
\]
so that calling 
\begin{equation}\label{Eq:SetA}
A_n:=\left\{\bo\omega:\,\sum_{i=1}^{n}\bo 1_{\{\omega_i\in[\alpha,\gamma^{-1})\}}(\bo\omega)<n(p_0-\epsilon)\right\},\end{equation} 
$\nu^\N(A_n)\le e^{-2n\epsilon^2}$.
Now, using
\eqref{Eq:EstTailstauY}:
\begin{align*}
\mb P_Y(\{(\bo\omega,x)\colon \tau_Y(\bo\omega,x)>n\})&=\frac 12\int d\nu^{\N}(\bo\omega)
x_n(\bo\omega)\\
&=\frac 12\left[\int_{A_n}d\nu^{\N}(\bo\omega)+\int_{A_n^c}d\nu^{\N}(\bo\omega)\right]
x_{n}(\bo\omega)\\
&\le \tfrac 12(e^{-2n\epsilon^2}+(n(p_0-\epsilon))^{-\gamma})=\mc O(n^{-\gamma}).
\end{align*}
establishing Condition \eqref{Eq:Assum1} for any $\gamma<\frac 1\alpha$. As pointed out above, this implies that
Condition \eqref{Eq:Assum2} holds with $R(n)=n^\gamma$ for any $\gamma<\frac 1\alpha$. 

If $\nu(\{\alpha\})>0$, then one can repeat the above reasoning with 
$\gamma=\frac{1}{\alpha}$ to obtain
\[
\mb P_Y(\{(\bo\omega,x)\colon \tau_Y(\bo\omega,x)>n\})\le \mc O(n^{-\frac{1}{\alpha}}).
\]
\end{proof}

\section{Renewal Theory Approach}\label{Sec:Renewal}
 
In this section we prove Theorem \ref{Thm:DecCorr} on correlation decay, and Theorem \ref{Thm:LimitUnfold} on convergence to stable laws. In Section \ref{Sec:DecayRen} we treat the correlation decay using the renewal theory for transfer operators, as introduced by Sarig \cite{sarig2002subexponential} and further developed by Gou\"ezel in \cite{gouezel2004sharp}, while in Section \ref{Sec:ConStabLaws} we use the Nagaev-Guivarc'h approach to prove convergence to limit laws.

\subsection{Decay of Correlations via the Renewal Theory for Transfer Operators}
\label{Sec:DecayRen}
We apply the following general theorem
that can be found in \cite{gouezel2004sharp}. Let us denote the open unit disk by 
$\mb D=\{z\in\mb C:\; |z|<1\}$.

\begin{theorem}[Theorem 1.1 \cite{gouezel2004sharp}]\label{Thm:RenGouez}
Let $\{T_n\}_{n\ge 0}$ be bounded operators on a Banach space $(\mc
B,\|\cdot\|)$ such that $T(z)=\Id+\sum_{n\ge 1}z^nT_n$ converges for
every $z\in \mb D$. Assume that:
\begin{itemize}
\item for every $z\in\mb D$, $T(z)=(\Id-R(z))^{-1}$ where
  $R(z)=\sum_{n\ge 1}z^nR_n$ and $\{R_n\}_{n\ge 1}$ are bounded
  operators on $\mc B$ such that $\sum_{n\ge 1}\|R_n\|<+\infty$;
\item 1 is a simple isolated eigenvalue of $R(1)$;
\item for every $z\in\bar{\mb D}\backslash\{1\}$, $\Id-R(z)$ is invertible. 
\end{itemize}
Let $\Pi$ be the eigenprojection of $R(1)$ at 1.  If $\sum_{k\ge
  n}\|R_k\|=\mc O(1/n^\gamma)$ for some $\gamma>1$ and $\Pi
R'(1)\Pi\neq 0$, then for all $n$
\[
T_n=\frac{1}{\mu}\Pi+\frac{1}{\mu^2}\sum_{k=n+1}^{+\infty}\Pi_k+E_n
\]
where $\mu$ is given by $\Pi R'(1)\Pi=\mu \Pi$, $\Pi_n=\sum_{l>n}\Pi
R_l\Pi$ and $E_n$ is a bounded operator satisfying
\[
\|E_n\|=\left\{\begin{array}{ll}
\mc O(1/n^{\gamma})& \mbox{if }\gamma>2\\
\mc O(\log n/n^2)& \mbox{if }\gamma=2\\
\mc O(1/n^{2\gamma-2})&\mbox{if }2>\gamma>1 
\end{array}\right.
\]
\end{theorem}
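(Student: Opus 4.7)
The plan is to invert the generating function $T(z)$ via spectral perturbation at the pole $z=1$, then extract Taylor coefficients by contour deformation. From $T(z)=(\Id-R(z))^{-1}$ and the uniform bound $\sum\|R_n\|<\infty$, the map $z\mapsto R(z)$ is continuous on $\bar{\mb D}$, and by hypothesis $\Id-R(z)$ is invertible on $\bar{\mb D}\setminus\{1\}$. So the only obstruction to shifting the contour to $|z|=1+\delta$ (which would immediately give geometric decay of $T_n$) is the pole structure at $z=1$.

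I would then apply analytic perturbation theory (Kato) in a neighborhood of $z=1$: since $1$ is a simple isolated eigenvalue of $R(1)$, there exist analytic functions $\lambda(z)$ (simple eigenvalue, $\lambda(1)=1$) and $\Pi(z)$ (rank-one spectral projection, $\Pi(1)=\Pi$), together with an analytic complementary operator $N(z)$ whose spectrum stays away from $1$. This yields the decomposition
\[
T(z)=\frac{\Pi(z)}{1-\lambda(z)}+(\Id-N(z))^{-1},
\]
whose second summand, being analytic on a neighborhood of $\bar{\mb D}$, contributes only a geometrically decaying term to $T_n$ which is absorbed into $E_n$. Differentiating $R(z)\Pi(z)=\lambda(z)\Pi(z)$ at $z=1$ and sandwiching with $\Pi$ gives $\lambda'(1)\Pi=\Pi R'(1)\Pi=\mu\Pi$, identifying $\mu$ as the first-order vanishing rate of $1-\lambda(z)$ at $z=1$.

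The two stated main terms then come from a careful first-order expansion of $\Pi(z)/(1-\lambda(z))$: the leading pole $\Pi/(\mu(1-z))$ contributes $\Pi/\mu$ to every $T_n$; the next term, coming from $\Pi(R(z)-R(1))\Pi$ divided by $\mu^2(1-z)$ and using the identity $\Pi R(z)\Pi-\Pi=\sum_{k\ge 1}(z^k-1)\Pi R_k\Pi$, contributes exactly $\mu^{-2}\sum_{k>n}\Pi_k$ as its $n$-th Taylor coefficient.

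The main obstacle will be the sharp control of the residual $E_n$. The polynomial tail hypothesis $\sum_{k\ge n}\|R_k\|=\mc O(n^{-\gamma})$ encodes a fractional $\gamma$-regularity of $R(z)$ at $z=1$ that must be transferred to $\Pi(z)/(1-\lambda(z))$ after subtracting its explicit singular part. Extracting the $n$-th Taylor coefficient from this remainder via contour shifting (or equivalently via Fourier analysis on the unit circle) gives $\mc O(n^{-\gamma})$ in the regime $\gamma>2$; in the borderline $\gamma=2$ case one loses a logarithm, and for $1<\gamma<2$ the self-convolution implicit in $(1-\lambda(z))^{-1}$ halves the effective decay to $\mc O(n^{-(2\gamma-2)})$. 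This Tauberian-type bookkeeping, isolating exactly the claimed tail sum $\sum_{k>n}\Pi_k$ rather than a less explicit remainder while tracking the three regimes in $\gamma$, is the delicate part of the argument.
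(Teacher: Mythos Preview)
This theorem is not proved in the paper: it is quoted verbatim as Theorem~1.1 of Gou\"ezel \cite{gouezel2004sharp} and then applied. So there is no proof in the paper to compare your proposal against.

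That said, your sketch does match the overall architecture of Gou\"ezel's original argument --- Kato perturbation of the simple eigenvalue $1$ of $R(1)$, the decomposition $T(z)=\Pi(z)/(1-\lambda(z))+(\Id-N(z))^{-1}(\Id-\Pi(z))$, the identification $\lambda'(1)=\mu$, and the explicit extraction of the two main terms from the singular part. Where your outline goes wrong is in the mechanism for controlling the remainder. You repeatedly invoke ``contour shifting to $|z|=1+\delta$'' and speak of pieces being ``analytic on a neighborhood of $\bar{\mb D}$''. This is not available: the only hypothesis on $R(z)$ is $\sum_{k\ge n}\|R_k\|=\mc O(n^{-\gamma})$, which gives continuity on $\bar{\mb D}$ and a polynomial Fourier tail on the circle, but no analytic extension beyond the unit disk. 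Consequently neither $N(z)$ nor $(\Id-N(z))^{-1}$ need be analytic past $|z|=1$, and there is no contour to deform to.

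Gou\"ezel's actual proof replaces your contour argument by working in Banach algebras of operator-valued functions on the unit circle whose Fourier coefficients have prescribed polynomial decay (weighted Wiener algebras). The key technical input is that these algebras are closed under multiplication and inversion of nonvanishing elements, so that once the explicit singular part at $z=1$ is subtracted, the remainder lies in the same algebra and its $n$-th Fourier coefficient inherits the $\mc O(n^{-\gamma})$ bound; the trichotomy in $\gamma$ arises from how the convolution structure of $(1-\lambda(z))^{-1}$ interacts with these weights. Your ``Tauberian-type bookkeeping'' phrase points in the right direction, but the substance is Wiener-algebra closure, not analytic continuation.
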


To apply the above result to our context, consider
$F_Y:=F^{\tau_Y}:Y\rightarrow Y$, and $P_Y:L^1(Y,\mb P_Y)\rightarrow
L^1(Y,\mb P_Y)$ its transfer operator.  Define operators $T_n,R_n:
L^1(Y,\mb P_Y)\rightarrow L^1(Y,\mb P_Y)$ for every $n\in\N_0$, in the
 following way
\[
T_0=\Id,\quad R_0=0,\quad T_n\phi=\chi_Y P_F^n(\chi_Y \phi), \quad
R_n\phi= P_F^n(\phi \chi_{\{\tau_Y=n\}})
\]
for $n\in\N$,  where $P_F: L^1(X, \mb P) \rightarrow L^1(X, \mb P)$ denotes the transfer operator for the skew product $F$.

\begin{corollary}\label{Cor:CorRenewal}
Assume there is a Banach space $(\mc B, \|\cdot\|)$, $\mc B\subset
L^1(Y,\mb P_Y)$, such that the operators $\{T_n\}_{n\ge 0}$ and
$\{R_n\}_{n\ge 1}$ satisfy the hypotheses of Theorem
\ref{Thm:RenGouez}. Then for any $h_1\in\mc B$ supported on $Y$, and
any
$h_2\in L^\infty(Y)$,
\[
\left|\int h_1h_2\circ F^nd \mb P_Y-\int h_1d \mb P_Y\int h_2 d\pi\right|\leq \mc O (n^{1-\gamma}).
\]
where $\pi$ is the stationary measure  for $F$ on $X$.
\end{corollary}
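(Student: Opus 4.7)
My plan is to translate the correlation integral into the action of the operators $T_n$, apply Gou\"ezel's renewal theorem to replace $T_n$ by its leading asymptotic, and identify that asymptotic with the advertised product of integrals. For $h_1\in\mc B$ supported on $Y$ and $h_2\in L^\infty(Y)$ extended by zero to $X$, duality of the skew-product transfer operator $P_F$ with respect to $\mb P$ gives
\[
\int h_1\cdot h_2\circ F^n\,d\mb P_Y
=\frac{1}{\mb P(Y)}\int(h_1\chi_Y)\cdot(h_2\chi_Y)\circ F^n\,d\mb P
=\int T_n h_1\cdot h_2\,d\mb P_Y,
\]
using that $T_n h_1=\chi_Y P_F^n(\chi_Y h_1)$ is already supported on $Y$, so the extra $\chi_Y$ attached to $h_2$ costs nothing.

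Next, I would invoke Theorem \ref{Thm:RenGouez}, whose hypotheses are assumed. The bound $\sum_{k\ge n}\|R_k\|=\mc O(n^{-\gamma})$ forces $\|\Pi_n\|\le\|\Pi\|^2\sum_{l>n}\|R_l\|=\mc O(n^{-\gamma})$, hence $\sum_{k>n}\|\Pi_k\|=\mc O(n^{1-\gamma})$; a case check in each of the three regimes for $\gamma>1$ shows that the explicit remainder $E_n$ is of the same order or better. Assuming the natural continuous embedding $\mc B\hookrightarrow L^1(Y,\mb P_Y)$ implicit in the Banach space setup used for such renewal constructions, pairing $T_n h_1-\mu^{-1}\Pi h_1$ against $h_2\in L^\infty$ yields
\[
\left|\int T_n h_1\cdot h_2\,d\mb P_Y-\frac{1}{\mu}\int \Pi h_1\cdot h_2\,d\mb P_Y\right|
=\mc O\bigl(n^{1-\gamma}\bigr)\|h_1\|\,\|h_2\|_\infty.
\]

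Finally, I would identify the leading term. A direct calculation shows $\int R(1)\phi\,d\mb P_Y=\int\phi\,d\mb P_Y$ (the return time $\tau_Y$ is finite almost surely by Proposition \ref{Prop:As1and2valid}), so $\mb P_Y$ is a left eigenvector of $R(1)=\sum_{n\ge 1}R_n$ at the eigenvalue $1$, and hence $\Pi h=\bigl(\int h\,d\mb P_Y\bigr)h_Y$, where $h_Y$ is the unique $R(1)$-invariant density normalised so that $\int h_Y\,d\mb P_Y=1$. Evaluating $\Pi R'(1)\Pi$ on $h_Y$ then gives $\mu=\int\tau_Y\, h_Y\,d\mb P_Y$, the mean return time under the induced invariant probability measure $h_Y\mb P_Y$. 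Standard Young-tower reconstruction produces an $F$-invariant probability $\pi$ on $X$ with $\pi|_Y=\mu^{-1}h_Y\mb P_Y$, so for $h_2$ supported in $Y$,
\[
\frac{1}{\mu}\int\Pi h_1\cdot h_2\,d\mb P_Y
=\Bigl(\int h_1\,d\mb P_Y\Bigr)\cdot\frac{1}{\mu}\int h_2 h_Y\,d\mb P_Y
=\Bigl(\int h_1\,d\mb P_Y\Bigr)\Bigl(\int h_2\,d\pi\Bigr),
\]
and combining the three steps finishes the argument.

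The main obstacle I anticipate is this last identification: verifying that the abstract data $\Pi$ and $\mu$ emerging from Gou\"ezel's framework match the concrete Young-tower invariant density and mean return time, and that the reconstructed measure agrees with the stationary $\pi$ whose existence is claimed in Theorem \ref{Thm:DecCorr}. Once this bookkeeping is done, the rate $\mc O(n^{1-\gamma})$ is an immediate consequence of the tail estimate on $\sum_{k>n}\Pi_k$ together with the dominant remainder in Theorem \ref{Thm:RenGouez}.
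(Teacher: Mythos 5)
Your proof is correct and follows essentially the same route as the paper: you rewrite the correlation integral as $\int T_n h_1\cdot h_2\,d\mb P_Y$, apply Gou\"ezel's renewal theorem to replace $T_n$ by $\mu^{-1}\Pi$ plus lower-order terms, and identify $\Pi h=(\int h\,d\mb P_Y)h_Y$ and $\mu=\int\tau_Y h_Y\,d\mb P_Y=1/\pi(Y)$ (which is exactly the relation $\mu=1/\pi[\tfrac12,1]$, $\pi_Y\cdot\pi[\tfrac12,1]=\pi|_{[\tfrac12,1]}$ invoked in the paper's argument). The only point at which you are somewhat more explicit than the paper is the case check that $\|E_n\|$ and $\sum_{k>n}\|\Pi_k\|$ are both $\mc O(n^{1-\gamma})$ across the three ranges of $\gamma$, and your flagged assumption that $\mc B\hookrightarrow L^1(Y,\mb P_Y)$ is continuous, which holds trivially for the Lipschitz space actually used in Proposition \ref{Prop:PropRenewal}.
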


Let us say a few words about how this result follows from the previous theorem, given the definitions above.  We will see (in the proof of Proposition \ref{Prop:PropRenewal} below) that $P_Y = R(1)$,
$\mu =1/\pi [\frac 12, 1]$ and $d \pi_Y = \psi_0 d \mb P_Y$, with $ \Pi h  = ( \int h d \mb P_Y ) \psi_0 $ and the relation $\pi_Y \cdot \pi[\frac 12, 1] = \pi |_{[\frac 12, 1]}$.  We can then write 

\begin{align*} \int h_1h_2\circ F^nd  \mb P_Y &=2 \int \chi_Y  h_1  h_2 \circ F^n d \mb P \\
&= 2 \int P_F^n(\chi_Y  h_1)h_2  d \mb P\\
&= 2 \int \chi_Y P_F^n(\chi_Y h_1)h_2 d  \mb P\\
&= \int T_n(h_1) h_2 d  \mb P_Y \\
\end{align*}

We now apply the expansion of $T_n$ to obtain
\[
\int h_1h_2\circ F^nd  \mb P_Y =  \pi [\frac 12, 1] \left (\int h_1 d\mb P_Y\right ) \psi_0 h_2 d \mb P_Y + \textnormal{H.O.T}= \int h_1 d \mb P_Y  \int  h_2 d \pi  + H.O.T,
\]
where the higher order terms arise from the second and third terms in the expansion of $T_n$ and both decay with a rate upper bounded by $\mc O(n^{1-\gamma})$ for all three ranges of $\gamma$ in Theorem \ref{Thm:RenGouez}).
 
Corollary \ref{Cor:CorRenewal} now gives Theorem \ref{Thm:DecCorr} for the restricted case of $\psi$ and $\phi$ supported on $[\frac 12, 1]$ since $d \mb P_Y = 2 \mb P |_{[\frac 12,1]}$ in both integrals. We can extend to the case where $\phi$ is supported on $[0,1]$  as follows: Set $h_1( \omega, x) = \phi(2x-1)$ for $x \in [\frac 12, 1]$,  $h_1(\omega, x ) = 0$ for $x \in [0,\frac 12]$ and $h_2(\omega, x) = \psi(x)$.  Observe that $P_F h_1(\omega, x) = \frac 12 \phi(x)$ so $\int h_1\, d\mb P_Y = 2 \int \, h_1 d\mb P= \int \phi d \mb P$ and the correlation integral above becomes  
$$ \int h_1 h_2\circ F^{n}d  \mb P_Y=\int \phi \psi\circ F^{n-1}d  \mb P,  $$
leading to the result stated in Theorem \ref{Thm:DecCorr} provided $\psi$ is supported on $[\frac 12,1]$.   The final extension to fully supported $\psi$ can be established using the method detailed in Gou\"ezel \cite{gouezel2004sharp}, Theorem 6.9. 
    
The proposition below shows that the hypotheses of Theorem
\ref{Thm:RenGouez} and Corollary \ref{Cor:CorRenewal} are satisfied in our setup by the Banach space $\mc
B$ of functions on $Y$ that are a) constant w.r.t.\ $\omega$ and b) Lipschitz w.r.t.\ the spatial variable $x$. To simplify notation, we indicate these functions in terms of $x$ only and write

\[
\mc B:=\{\phi:[1/2,1]\rightarrow \mb C:\; |\phi|_{\Lip}<\infty\}
\] 
where 
\[
|\phi|_{\Lip}=\sup_{\substack{x,y\in[1/2,1]\\ x\neq y}}\frac{|\phi(x)-\phi(y)|}{|x-y|}
\]
is the Lipschitz semi-norm, and $\mc B$ is endowed with the norm
$\|\phi\|:=|\phi|_{\Lip}+|\phi|_\infty$.  Notice that if $\phi \in \mc B$  the function 
$P_F \phi$ is also constant w.r.t. $\omega$. In particular, we can compute the
skew product transfer operator as 
$$P_F \phi (x, \omega)= P_F \phi (x) = \int_{[\alpha,\beta]} P_\gamma \phi (x) d\nu(\gamma),$$
 where $P_\gamma$ denotes the transfer operator associated to $f_\gamma$  on $[0,1]$ with respect to $m$.
 We will see this leads to a useful simplification when computing higher powers $P_F^k$  and the induced transfer operator below. 

\begin{proposition} \label{Prop:PropRenewal}
Let $\mc B$ be as above. The maps $T_n$ and $R_n$ are bounded operators on $\mc B$.
Suppose that condition \eqref{Eq:Assum1} is satisfied for some $\gamma>1$.
The series $T(z)=\Id+\sum_{n\ge 1}z^n T_n$ converges on
$\mb D$, and:
\begin{itemize}
\item[(i)] $\Id-R(z)$ is invertible for every $z\in\bar{\mb D}\backslash \{1\}$;
\item[(ii)] $T(z)=(\Id-R(z))^{-1}$ for $z\in\mb D$, where $R(z)=\sum_{n\ge
  1}z^n R_n$ and $\sum_{n\ge 1}\|R_n\|<\infty$;
\item[(iii)] $R(1)$ has a spectral gap, i.e. there is $\Pi$ with
  $\Pi^2=\Pi$, $\dim\Im \Pi=1$, and there is $N$
  satisfying $\Pi N=N\Pi=0$, $\sigma(N)<1$, such that $R(1)=\Pi+N$;
  The non-degeneracy condition $\Pi R'(1)\Pi\neq 0$ holds.
\item[(iv)] $\sum_{k>n}\|R_k\|=\mc O(n^{-\gamma})$.
\end{itemize}
\end{proposition}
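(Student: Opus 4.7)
The plan is to verify each assertion of Proposition \ref{Prop:PropRenewal} using the distortion control of Corollary \ref{Cor:Dist} and the tail bounds of Proposition \ref{Prop:As1and2valid}. A key preliminary observation is that for $\phi\in\mc B$ depending only on $x$, the averaged transfer operator $P_F\phi(\bo\omega,x)=\int_{[\alpha,\beta]}P_\gamma\phi(x)\,d\nu(\gamma)$ is itself independent of $\bo\omega$, so $R_n$, $T_n$, and their generating functions act on the one-variable Banach space $\mc B$. On each inverse branch of $f_{\bo\omega}^n$ with $\tau_Y=n$, Corollary \ref{Cor:Dist} bounds the variation of $|D f_{\bo\omega}^n|^{-1}$ by $K'$ times its value times the image-length $|x_1-x_2|$, which together with the Lipschitz control of $\phi$ yields $\|R_n\phi\|\le C\|\phi\|\,\mb P_Y(\tau_Y=n)$. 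Summing in $n$ gives $\sum_n\|R_n\|<\infty$ and $\sum_{k>n}\|R_k\|=\mc O(n^{-\gamma})$, establishing item (iv); the analogous estimate for $T_n$ shows $\|T_n\|$ is uniformly bounded, so $T(z)$ converges on $\mb D$. The identity $T(z)=(\Id-R(z))^{-1}$ in item (ii) follows from the standard first-return decomposition $T_n=\sum_{k=1}^n R_k T_{n-k}$.

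The heart of the argument is item (iii), the spectral gap of $R(1)=P_{F_Y}$. Here I would establish a Lasota-Yorke inequality
\[
|P_{F_Y}\phi|_{\Lip}\le \tfrac12|\phi|_{\Lip}+C\|\phi\|_\infty,\qquad \|P_{F_Y}\phi\|_\infty\le C\|\phi\|,
\]
with the $1/2$ contraction arising because every branch of $F_Y$ begins with the affine factor $2x-1$, so $|DF_Y|\ge 2$ uniformly, and the $C\|\phi\|_\infty$ correction coming from Corollary \ref{Cor:Dist} applied to $1/|DF_Y|$. Combined with the compact embedding $\mc B\hookrightarrow C^0([1/2,1])$, the Hennion-Ionescu-Tulcea-Marinescu theorem then yields quasi-compactness. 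Simplicity of the eigenvalue $1$ follows from uniqueness of the absolutely continuous invariant probability measure for $F_Y$: every inverse branch of $F_Y$ surjects onto $[1/2,1]$, forcing irreducibility. The peripheral spectrum reduces to $\{1\}$ by aperiodicity of the return time (both $\tau_Y=1$ and $\tau_Y=2$ occur with positive probability).

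Item (i), invertibility of $\Id-R(z)$ on $\bar{\mb D}\setminus\{1\}$, is immediate on $\mb D$ from (ii), while on $|z|=1$, $z\ne 1$ it follows from a standard Perron-Frobenius argument: a modulus-$1$ eigenvalue of $R(z)$ at such a $z$ would contradict the aperiodicity established above. The non-degeneracy in (iii) is computed via $\Pi h=\psi_0\int h\,d\mb P_Y$ for the invariant density $\psi_0$ of $F_Y$:
\[
\Pi R'(1)\Pi\,\phi=\Bigl(\int\phi\,d\mb P_Y\Bigr)\Bigl(\int\tau_Y\,d\pi_Y\Bigr)\psi_0,
\]
and the mean return time is finite and strictly positive since $\gamma>1$ in \eqref{Eq:Assum1}.

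The main obstacle is the Lasota-Yorke inequality and the associated spectral properties of $P_{F_Y}$. The induced map $F_Y$ does not admit a Gibbs-Markov structure: for non-atomic $\nu$ the branches of $F_Y$ form an uncountable family of Lebesgue-null subsets of $Y$, and the classical countable-branch approach is unavailable. The role of Corollary \ref{Cor:Dist} is precisely to repackage the distortion of arbitrary random compositions in a form depending only on the image length, so that the spectral analysis can be carried out at the level of $\mc B$, uniformly in $\bo\omega$, without recourse to a Markov partition.
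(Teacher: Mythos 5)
Your overall strategy mirrors the paper's: bound $R_n$ via the distortion corollary and the tail condition, verify the renewal identity by first-return decomposition, prove a Lasota-Yorke inequality for $P_Y$ on $\mc B$, invoke Hennion's theorem for quasi-compactness, and rule out other peripheral eigenvalues. Two points need repair, however.

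First, the one-step Lasota-Yorke inequality with contraction constant exactly $\tfrac12$ is not attainable. Writing $P_Y\psi$ as a sum over first-return intervals $J$ and estimating as in Corollary~\ref{Cor:Dist}, the term you want to absorb into $C\|\phi\|_\infty$ is not the only source of $|\phi|_{\Lip}$: the distortion of $1/|Df_{\bo\omega}^n|$ on each $J$ contributes a term of the form $K|J|\,2^{-k}|\phi|_{\Lip}$ (with $k$ the number of returns and $K$ the distortion constant of Corollary~\ref{Cor:Dist}), so the coefficient in front of $|\phi|_{\Lip}$ after summing over $J$ is $\mc O(K)\cdot 2^{-k}$, not $2^{-k}$. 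For $k=1$ this may exceed $1$. The paper handles this by estimating $|P_Y^k\psi|_{\Lip}$ for all $k$ and using the $2^{-k}$ decay of the diameters of the $k$-fold return partition to eventually beat the distortion constant, obtaining $\|P_Y^k\psi\|\le \frac{M''}{2^k}\|\psi\|+(M''+1)|\psi|_\infty$. You need some version of this iterated estimate (or an a priori reason the distortion constant is $<2$) for Hennion's theorem to apply.

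Second, your treatment of item (i) for $|z|=1$, $z\ne 1$ as a ``standard Perron-Frobenius argument'' plus ``aperiodicity'' hides the part that is actually delicate here. Because the branch structure of $F_Y$ is uncountable and not Gibbs--Markov, there is no off-the-shelf theorem saying modulus-one eigenvalues of $R(z)$ force periodicity. The paper has to argue by hand: if $R(z)\phi=\phi$ then $R(1)|\phi|\ge|\phi|$, integral-preservation forces equality, hence $|\phi|=g$ (the leading eigenfunction) and $\phi=gh$ with $|h|\equiv 1$ Lipschitz; the equality case in the triangle inequality gives the cocycle relation $h\circ F_Y=h\cdot z^{\tau_Y}$ on each first-return interval $J$, and since $F_Y$ maps arbitrarily short intervals $J$ onto all of $[\tfrac12,1]$, the variation of $h$ over $[\tfrac12,1]$ equals its variation over $J$ and must vanish, so $h$ is constant; only then does the fact that both $\tau_Y=1$ and $\tau_Y=2$ occur with positive probability yield $z=1$. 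That intermediate step---forcing $h$ constant via the shrinking-interval/full-branch argument---is missing from your sketch and is exactly where the lack of a Markov partition would sink a naive citation of aperiodicity.

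The remaining items are fine: your identification $\Pi R'(1)\Pi\,\phi=(\int\phi\,d\mb P_Y)(\int\tau_Y\,d\pi_Y)\psi_0$ is a cleaner formulation of the paper's positivity argument for non-degeneracy, your $\|R_n\|\lesssim \mb P_Y(\tau_Y=n)$ estimate and summation by parts give (iv) and the convergence of $T(z)$, and the cone/mixing argument for simplicity of the eigenvalue $1$ is what the paper does (citing Liverani's cone techniques) even if you phrase it as irreducibility of the full-branch structure.
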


\begin{proof}[Proof of Proposition \ref{Prop:PropRenewal}]
Fix any $k\in\N$, a $n\ge k$, and any sequence
$(\omega_{0}\ldots \omega_{n-1})$ and write $f^n$ for $f^n_{(\omega_{0}\ldots \omega_{n-1})}$. 
Define $\mc
J^k_{(\omega_{0}\ldots \omega_{n-1})}$ to be the collection of
maximal subintervals of $[\frac 12,1]$ where $f^n$ is continuous, 
returning to $[\frac 12,1]$ for the 
$k$th time at time $n$ under $f^n$. If $J\in\mc
J^k_{(\omega_{0}\ldots \omega_{n-1})}$, the points of $J$ have the same
return times up to the $k$th return and $J$ is mapped injectively and
onto $[\frac 12,1]$ under $f^n$. Pick
$\psi\in \mc B$, then
\[
P^k_Y\psi=\int_{[\alpha,\beta]^{\N_0}}d\nu^{
  \N_0}(\omega_{0}\ldots \omega_{n-1}\ldots) \sum_{n\ge
  k}\sum_{J}P_{\omega_{n-1}}\ldots P_{\omega_{0}}\left(\psi\chi_{J}\right)(x)
\]
where the second sum is over $J\in \mc
J^k_{(\omega_{0}\ldots \omega_{n-1})}$ and again we indicate by $P_{\omega_i}$
the transfer operator of $f_{\omega_i}$.  We are going to prove that
$P_Y$ satisfies a Lasota-Yorke inequality.  First of all notice
that
\[
P_{\omega_{n-1}}\circ\ldots \circ P_{\omega_{0}}(\psi\chi_J)(x)=\psi\circ
f_{(\omega_{0}\ldots \omega_{n-1}),J}^{-n}(x)
\partial_xf_{(\omega_{0}\ldots\omega_{n-1}),J}^{-n}(x)
\]
with $f_{(\omega_{0}\ldots \omega_{n-1}),J}^{n}$ indicating the
restriction of the function to $J$. Given two points $x,y\in[1/2,1]$
\begin{align}
&\left|\psi\circ
f_{(\omega_{0}\ldots \omega_{n-1}),J}^{-n}(x)
\partial_xf_{(\omega_{0}\ldots \omega_{n-1}),J}^{-n}(x)-\psi\circ
f_{(\omega_{0}\ldots \omega_{n-1}),J}^{-n}(y)
\partial_xf_{(\omega_{0}\ldots \omega_{n-1}),J,}^{-n}(y)\right|\leq \label{Eq:A+BEst}
\\
&\quad\quad\leq\left|\partial_xf_{(\omega_{0}\ldots \omega_{n-1}),J}^{-n}(x)
\right|\left|\psi\circ
f_{(\omega_{0}\ldots \omega_{n-1}),J}^{-n}(x)-\psi\circ
f_{(\omega_{0}\ldots \omega_{n-1}),J}^{-n}(y)\right|+\nonumber\\ 
&\quad\quad\quad+\left|\psi\circ
f_{(\omega_{0}\ldots \omega_{n-1}),J}^{-n}(y)\right|\left|
\partial_xf_{(\omega_{0}\ldots \omega_{n-1}),J}^{-n}(x)-
\partial_xf_{(\omega_{0}\ldots \omega_{n-1}),J}^{-n}(y)\right|=\nonumber\\
&\quad\quad=:A+B.\nonumber
\end{align}
To bound $A$ we show first that there is $M>0$ uniform in the choice
of $J$ such that
\[
|\partial_xf_{(\omega_{0}\ldots \omega_{n-1}),J}^{-n}(x)|\leq M |J|. 
\]
In fact, recall that $J$ is mapped bijectively onto $[\frac 12,1]$. By the
mean value theorem, there is $\xi\in [\frac 12,1]$ such that
\[
2|J|=\partial_xf_{(\omega_{0}\ldots \omega_{n-1}),J}^{-n}(\xi).
\] 
Combining this with the bound on distortion from Corollary
\ref{Cor:Dist}, there is a constant $K$ such that
\begin{align*}
|\partial_xf_{(\omega_{0}\ldots \omega_{n-1}),J}^{-n}(x)| 
&=
\left|\frac{\partial_xf_{(\omega_{0}\ldots \omega_{n-1}),J}^{-n}(x)}
{\partial_xf_{(\omega_{0}\ldots \omega_{n-1}),J}^{-n}(\xi))}\right|
\left|\partial_xf_{(\omega_{0}\ldots \omega_{n-1}),J}^{-n}(\xi)\right|\\
&=
\left|\frac{\partial_xf_{(\omega_{0}\ldots \omega_{n-1}),J}^{n}
(f_{(\omega_{0}\ldots \omega_{n-1}),J}^{-n}(\xi))}{\partial_x
f_{(\omega_{0}\ldots \omega_{n-1}),J}^{n}(f_{(\omega_{0}\ldots \omega_{n-1}),J}^{-n}(x))}
\right||\partial_xf_{(\omega_{0}\ldots \omega_{n-1}),J}^{-n}(\xi)|\\ 
&\le
K |J|
\end{align*}
This and the fact that
\[
|\psi\circ f^{-n}_{(\omega_{0}\ldots \omega_{n-1}),J}(x)-\psi\circ 
f^{-n}_{(\omega_{0}\ldots \omega_{n-1}),J}(y)|\leq 2^{-k}|\psi|_{\Lip}|x-y|
\]
imply that
\[
A\leq M|J| 2^{-k}  |\psi|_{\Lip}|x-y|.
\]
To bound $B$ first of all notice that
\begin{align*}
\left|\psi\circ f_{(\omega_{0}\ldots \omega_{n-1}),J}^{-n}(y)\right|
&\leq |J|^{-1}\int_{J}|\psi|+2^{-k}|\psi|_{\Lip}
\end{align*}
where we used that the diameter of $J$  is less then $2^{-k}$ given that its points 
have $k$ common return times with respect to sequences from the cylinder 
$[\omega_{0}\ldots \omega_{n-1}]$. Then notice that
\begin{align*}
&\left| \partial_xf_{(\omega_{0}\ldots \omega_{n-1}),J}^{-n}(x)-\partial_x
f_{(\omega_{0}\ldots \omega_{n-1}),J}^{-n}(y)\right|\\
&=| \partial_x
f_{(\omega_{0}\ldots \omega_{n-1}),J}^{-n}(x)|\left| 1-\frac{\partial_x
f_{(\omega_{0}\ldots \omega_{n-1}),J}^{-n}(y)}{ \partial_x
f_{(\omega_{0}\ldots \omega_{n-1}),J}^{-n}(x)}\right|\\
&=| \partial_xf_{(\omega_{0}\ldots \omega_{n-1}),J}^{-n}(x)|\left| 1-\frac{\partial_x
f_{(\omega_{0}\ldots \omega_{n-1}),J}^n(f_{(\omega_{0}\ldots \omega_{n-1}),J}^{-n}(x))}
{\partial_xf_{(\omega_{0}\ldots \omega_{n-1}),J}^n
(f_{(\omega_{0}\ldots \omega_{n-1}),J}^{-n}(y))}\right|\\
&\le M'|J||x-y|,
\end{align*}
where we made use of Proposition \ref{Prop:UnifBndDist} in the last line.
We conclude that 
\[
B\le M'\left(\int_{J}|\psi|dm(x)+|J|2^{-k}|\psi|_{\Lip}\right)|x-y|
\]
Putting the bounds for $A$ and $B$ together, we conclude that there is 
$M''>0$ (uniform) such that for every $k\in\N$, $n\ge k$, 
$(\omega_{0}\ldots \omega_{n-1})$, and $J\in\mc J^k_{(\omega_{0}\ldots \omega_{n-1})}$
\begin{equation}\label{Eq:LipEstPiece}
|P_{\omega_{n-1}}\circ\ldots \circ P_{\omega_{0}}(\psi\chi_J)|_{\Lip}
\leq M''\left(\int_{J}|\psi|dm(x)+|J|2^{-k}|\psi|_{\Lip}\right).
\end{equation}

Applying this to $\mathcal J^1_{\omega_0\ldots\omega_{n-1}}$, which consists of a single interval,
$J^1_{\omega_0,\ldots,\omega_{n-1}}$ say, we see
$$
|P_{\omega_{n-1}}\circ\ldots\circ P_{\omega_0}(\cdot \chi_J)|_\text{Lip}\le M''|J^1_{\omega_0\ldots
\omega_{n-1}}|.
$$
We note
$$
R_n(\cdot)=\int_{[\alpha,\beta]^n}P_{\omega_{n-1}}\circ\ldots\circ 
P_{\omega_0}(\cdot \chi_{J^1_{\omega_0
\ldots\omega_{n-1}}})
\,d\nu^n,
$$
so that 
$$
\|R_n\|\le M''\int_{[\alpha,\beta]^n}|J^1_{\omega_0\ldots\omega_{n-1}}|\,d\nu^n.
$$
Since for each $(\omega_0,\ldots)\in [\alpha,\beta]^{\N_0}$, the sets 
$J^1_{\omega_0\ldots\omega_{n-1}}$ form a countable partition of $[\frac 12,1]$, 
it follows that $\sum\|R_n\|<\infty$ as required. 

By the same calculation,
\begin{align*}
\sum_{k>n}\|R_k\|&\le M''\int_{[\alpha,\beta]^{\N_0}}
\sum_{k>n}|J^1_{\omega_0\ldots\omega_{k-1}}|
\,d\nu^{\N_0}(\bo \omega)\\
&=M''\, \mb P_Y(\{(\bo\omega,x)\in Y\colon \tau_Y(\bo\omega,x)>n\})=
\mc O(n^{-\gamma})
\end{align*}
by condition \eqref{Eq:Assum1}, establishing \textbf{point (iv)}.

For every $\bo \omega=(\omega_0\omega_1\ldots )$, let 
$\mc J^k_{\bo\omega}$ be $\bigcup_{n\ge k}\mc J^k_{(\omega_0\ldots \omega_{n-1})}$, 
the countable partition of $[\frac 12,1]$ according to the first $k$ returns to $[\frac 12,1]$ under 
the maps $f_\omega^n$. One obtains
\begin{equation}\label{eq:essradbound}
\begin{split}
|P^k_Y\psi|_{\Lip}&\leq \int_{[\alpha,\beta]^{\N_0}}d\nu^{ \N_0}
( \omega_{0}\ldots \omega_{n-1}\ldots) \sum_{n\ge k}
\sum_{J}|P_{\omega_{n-1}}\ldots P_{\omega_{0}}\left(\psi\chi_{J}\right)|_{\Lip}\\
&\leq M'' \sum_{n\ge k}\int_{[\alpha,\beta]^{n}}
d\nu^{ n}(\omega_{0}\ldots \omega_{n-1})
\sum_{J\in\mc J^k_{(\omega_{0}\ldots \omega_{n-1})}}
\left(2^{-k}|J||\psi|_{\Lip}+\int_J|\psi|\right)\\
&=M''\int_{[\alpha,\beta]^{\N_0}}d\nu^{ \N_0}(\bo\omega)
\sum_{J\in\mc J^k_{\bo\omega}}\left(2^{-k}|J||\psi|_{\Lip}+\int_J|\psi|\right)\\
&=M'' 2^{-k}|\psi|_{\Lip}\int_{[\alpha,\beta]^{\N_0}}d\nu^{ \N_0}(\bo\omega)
\sum_{J\in\mc J^k_{\bo\omega}}|J|+M''\int_{[\alpha,\beta]^{\N_0}}
d\nu^{ \N_0}(\bo\omega)\sum_{J\in\mc J^k_{\bo\omega}}\int_J|\psi|\\
&=M'' 2^{-k}|\psi|_{\Lip}\,\frac{1}{2}+M''\int_{[1/2,1]}|\psi|,
\end{split}
\end{equation}
where in the first line, the sum is over those intervals whose $k$th return occurs at time $n$.  An elementary calculation shows 
$$\frac{1}{1-1/2} \int_{1/2}^1 P^k_Y \psi \, dm - |P^k_Y\psi|_{\Lip}   \leq P^k_Y\psi(x) \leq \frac{1}{1-1/2} \int_{1/2}^1  P^k_Y\psi \, dm+ |P^k_Y\psi|_{\Lip},$$
for every $x \in [1/2, 1]$ which, when combined with $| \int_{1/2}^1 P^k_Y\psi |  = 
|\int_{1/2}^1 \psi | \leq 1/2 |\psi|_\infty$, yields $| P^k_Y\psi|_\infty \leq |\psi|_\infty + |P^k_Y\psi|_{\Lip} $.  Combining this with the estimate above 
gives the following Lasota-Yorke inequality. 
\begin{equation}\label{eq:LY}
\| P^k_Y \psi \| \leq  \frac{M''}{2^k} \| \psi\| + (M'' +1) |\psi |_\infty.
\end{equation}

The operator $P_Y$ is 
therefore quasi-compact (by Hennion's theorem
\cite{hennion1993theoreme}).

We now show that $P_Y$ has unique fixed point, giving rise to an
invariant mixing measure $\nu^{\N_0}\otimes \mu$ for $F_Y$ where $\mu$
is absolutely continuous with respect to Lebesgue. In particular,
consider the cone of positive functions $\mc
C_a=\{\psi:[1/2,1]\rightarrow \R^+>0:\;\psi(x)/\psi(y)\leq
e^{a|x-y|}\}$. Since $\psi\in \mc C_a$ is independent of $\omega$
\[
P_Y\psi(x)=\int_{[\alpha,\beta]^{\N_0}} \sum_{n\ge
  1}P_{\omega_{n-1}}\ldots P_{\omega_{0}}\left(\psi\chi_{J_{(\omega_{0}\ldots
    \omega_{n-1})}}\right)(x) d\nu^{\N_0}(\omega_{0}\ldots \omega_{n-1}\ldots).
\]
Since $f^n_{(\omega_{0}\ldots \omega_{n-1})}$ has bounded distortion
and expansion both uniform in $n$ and $(\omega_{0}\ldots
\omega_{n-1})$, there is $a>0$ sufficiently large such that if
$\psi\in\mc C_a$
\[
\frac{P_{\omega_{n-1}}\ldots
  P_{\omega_{0}}\left(\psi\chi_{J_{(\omega_{0}\ldots
      \omega_{n-1})}}\right)(x)}{P_{\omega_{n-1}}\ldots
  P_{\omega_{0}}\left(\psi\chi_{J_{(\omega_{0}\ldots
      \omega_{n-1})}}\right)(y)}\leq e^{a'|x-y|}\;\;\;\; \forall
x,y\in[1/2,1],\;\forall n,\; \forall (\omega_{0}\ldots \omega_{n-1})
\]
with $a'<a$. It is easy to conclude that if $\psi\in\mc C_a$, then
$P_Y\psi\in\mc C_{a'}$. It is standard to conclude that $P_Y$ fixes a
direction in $\mc C_a$, giving a mixing invariant absolutely continuous probability measure $d\pi_Y = \psi_0 d \mb P_Y$ (see \cite{liverani1995decay} pp 244-250).  It is also easy to see that $\psi\in \mc B$. This fact
together with quasi-compactness of $P_Y$ and mixing implies that the
operator has a spectral gap.

{\bf Point (i)} We first deal with $|z|<1$. Pick $z\in\mb D$. Notice that
\begin{align*}
R(z)^k\phi(x)&=\sum_{n\ge 1}\int z^n\sum_J P_{\omega_{n-1}}\circ\ldots
\circ P_{\omega_0}(\phi\chi_J)\,d\nu^\N(\bo\omega),
\end{align*}
where the summation is over $\mathcal J^k_{\omega_0,\ldots,\omega_{n-1}}$,
the collection of intervals (depending on $\bo\omega$) returning to $Y$ for the $k$th
time at the $n$th step as above.
The terms corresponding to $A$ and $B$
in \eqref{Eq:A+BEst} are scaled by $|z|^n$, where $n\ge k$ is the time of the $k$th
return. In particular, arguing as in \eqref{eq:essradbound}, we see
$\|R(z)^k\|\le M|z|^k$, where $M$ does not depend on $k$ or $z$. Hence for
$|z|<1$, $\sum_{n\ge 0}R(z)^n$ is convergent, and $\Id-R(z)$ is invertible.

For $|z|=1$, one can show that $R(z)$ satisfies
a Lasota-Yorke inequality as we did for $R(1)=P_Y$.
This implies that the essential spectrum of $R(z)$ is contained in the
open disk $\mb D$, and $\Id-R(z)$ is invertible if and only if $1$ is
not an eigenvalue of $R(z)$. Now let $z$ lie on the unit circle but $z\ne 1$. 
Suppose for a contradiction that $\phi$ is a Lipschitz function that is an eigenfunction of
$R(z)$ with eigenvalue 1. Then taking absolute values and using the triangle 
inequality, we see $R(1)|\phi|\ge |\phi|$, this inequality being strict
unless for each $x$, almost every term contributing to $R(z)\phi$ has the same
argument. Since $\int R(1)|\phi|(x)\,dm(x)=\int |\phi(x)|\,dm(x)$, this implies
$R(1)|\phi|=|\phi|$, so that $|\phi|$ is the leading eigenvector, $g$ say, of $P_Y$. 
We write $\phi(x)=g(x)h(x)$ where $|h(x)|=1$ for all $x$. Since $g\in\mathcal C_a$, 
it is bounded below, so that $h$ must be Lipschitz. The condition for equality in the 
triangle inequality implies that $h(F_Y(\bo\omega,x))=h(x)z^{\tau_Y(\bo\omega,x)}$ for
$\PP_Y$-a.e.\ $(\bo\omega,x)$. In particular for $\nu^\N$-a.e.\ $\bo\omega$,
for $m$-a.e. $x$ in each $J^1_{\omega_0,\ldots\omega_{n-1}}$, we have 
$$
h(F_Y(\bo\omega,x))=h(x)z^n.
$$
Since both sides of the equality are Lipschitz functions, this holds for \emph{all}
$x\in J^1_{\omega_0,\ldots\omega_{n-1}}$. Since the $J^1_{\omega_0,\ldots\omega_{n-1}}$ 
are arbitrarily short for large $n$, we see that $\var_J(h)$ becomes arbitrarily small. But the 
above equality and the fact that $F_Y(\bo\omega,\cdot)$ maps $J$ onto $[\frac 12,1]$
imply that $\var_J(h)=\var_{[\frac 12,1]}(h)$. Hence $h$ is a constant and
we may assume $\phi=g$. Now $R(z)g=R(1)g=g$ implies that $z=1$.

{\bf Point (ii)} One can verify the renewal equations
\begin{align*}
T_n&=T_0R_n+T_1R_{n-1}+\ldots+T_{n-1}R_1\\
T_n&=R_nT_0+R_{n-1}T_1+\ldots+R_1 T_{n-1}.
\end{align*}
Now since $\sum_{n\ge 1}R(z)^n$ converges, we deduce that $(\Id-R(z))$ is invertible.
The coefficient multiplying $z^n$ in this last
sum is 
$$
\sum_{k=1}^n\sum_{i_1+\ldots+i_k=n}R_{i_1}R_{i_2}\ldots R_{i_k}=T_n,
$$
so $T(z)=(\Id-R(z))^{-1}$.
This also establishes the boundedness of $T_n$ for each $n$. 

{\bf Point (iii)} Notice that $R(1)=\sum_{n=1}^{\infty}R_n=P_Y$, so that there is the required
spectral gap and simple eigenvector with eigenvalue 1 as shown above. We have
$R'(1)=\sum_{n=1}^\infty nR_n=\sum_{n=1}^\infty\sum_{m\ge n}R_m$,
so that this is a bounded operator by point (iv). 
Note also that $R(1)$ preserves integrals, so that $\Pi$ also preserves integrals
and the operators $R_n$ preserve the class of non-negative functions. 
Since $R'(1)=R(1)+\sum_{n>1}(n-1)R_n$, It follows that $\Pi R'(1)\Pi$
is non-zero as required. 
\end{proof}

\subsection{Central Limit Theorem and Convergence to Stable Laws}\label{SS:CLTstab}\label{Sec:ConStabLaws}

The fact that $P_Y$ has a spectral gap allows us to use the
Nagaev-Guivarc'h method (as outlined in \cite{aaronson2001local}
\cite{Gouezel2}) to show that suitably rescaled Birkhoff sums
converge to stable laws.  In this approach, one first proves that suitably
rescaled Birkhoff sums of observables in the induced system ($F_Y$) converge
to stable laws, and then use this result to show that the same limit
theorem holds for the unfolded system ($F$). We are not going to give full details 
of the proofs, but we are going to present some computations needed in the
particular case that we are treating, and direct the reader to the relevant literature for the 
remaining standard part of the argument.

Let us call \[S^n_Y\psi:=\psi+\psi\circ F_Y+\ldots +\psi\circ F_Y^{n-1}\] the
Birkhoff sums with respect to $F_Y$.  We assume, as in the statement of 
Theorem \ref{Thm:LimitUnfold} that $\phi$ is a Lipschitz function on $[0,1]$ satisfying $\phi(0)>0$. 
Recall that given $\phi$,
$\phi_Y(\bo \omega,x)=\sum_{i=0}^{\tau_Y(\bo \omega,x)-1}\phi\circ
F^i(\bo \omega,x)$, and $G$ is the cumulative distribution function of $\phi_Y$
with respect to $\mathbb P_Y$: that is $G(t)=\mathbb P_Y(
\{(\bo\omega,x)\in\Omega\times[\frac 12,1]\colon 
\phi_Y(\bo\omega,x)\le t\}$.

\begin{proposition}\label{prop:inducedstablelaw}
Assume $G$ is such that $L(t):=t^p(1-G(t))$ with $p\neq 2$ is slowly varying as $t\to+\infty$
and $G(t)=0$ for large negative $t$. Then there is a stable law $\mc Z$ a sequence $A_n$, and a sequence $B_n$ satisfying
$nL(B_n)=B_n^p$ such that
\[
\lim_{n\rightarrow\infty}\frac{S^n_Y\phi_Y-A_n}{B_n}\rightarrow\mc Z
\]
where the convergence is in distribution. If $p>2$, then $\mc Z$ is a Gaussian, while if $p<2$, $\mc Z$ is a stable law of index $p$. 
\end{proposition}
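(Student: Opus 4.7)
The plan is to apply the Nagaev-Guivarc'h spectral method, using the spectral gap of $P_Y$ established in Proposition \ref{Prop:PropRenewal}. The main device is the family of twisted transfer operators $\hat P_Y(t)$ defined by
$$\hat P_Y(t)\psi := P_Y\bigl(e^{it\phi_Y}\psi\bigr), \quad t\in\R,$$
which reduces to $P_Y$ at $t=0$. A direct induction using the transfer operator identity $h\cdot P_Y(g) = P_Y((h\circ F_Y)g)$ gives $\hat P_Y(t)^n\psi_0 = P_Y^n\bigl(e^{it S^n_Y\phi_Y}\psi_0\bigr)$, and integrating against $d\mb P_Y$ yields
$$\int \hat P_Y(t)^n \psi_0 \, d\mb P_Y \;=\; \E_{\pi_Y}\!\left[e^{itS^n_Y\phi_Y}\right],$$
where $\psi_0$ is the Lipschitz density of the $F_Y$-invariant measure $\pi_Y$ from Proposition \ref{Prop:PropRenewal}. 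Computing characteristic functions therefore reduces to iterating $\hat P_Y(t)$.

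Next I would verify that $t\mapsto\hat P_Y(t)$ is continuous at $t=0$ in an appropriate operator norm, so that Kato-type perturbation theory applies on the Banach space $\mc B$ (or a closely related space accommodating the $\bo\omega$-dependence introduced by the twist). Since $1$ is a simple isolated eigenvalue of $P_Y$ with a uniform spectral gap, for $|t|$ small $\hat P_Y(t)$ admits a unique eigenvalue $\lambda(t)$ near $1$ with $\lambda(0)=1$ and a spectral decomposition $\hat P_Y(t)=\lambda(t)\Pi_t + N_t$ in which $\sigma(N_t)$ lies in a disk of radius strictly less than one, uniformly in $t$. Iterating and evaluating at $\psi_0$ yields
$$\E_{\pi_Y}\!\left[e^{itS^n_Y\phi_Y}\right] \;=\; \lambda(t)^n\bigl(1+o(1)\bigr), \qquad n\to\infty,$$
for each fixed small $t$, so the problem reduces to analysing $\lambda(t)$ near $0$.

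The decisive ingredient is then an asymptotic expansion of $\lambda(t)$ at $t=0$. The standard perturbation formula gives $\lambda(t)-1 = \int (e^{it\phi_Y}-1)\psi_0\, d\mb P_Y + \mathcal R(t)$, with $\mathcal R(t)$ a lower-order correction controlled by the reduced resolvent of $P_Y$. When $p>2$, the tail hypothesis implies $\phi_Y\in L^2(\mb P_Y)$, and a Taylor expansion gives $\lambda(t)=1+it\bar\phi - \tfrac12\sigma^2 t^2 + o(t^2)$; choosing $B_n=\sigma\sqrt n$ and $A_n=n\bar\phi$ yields $\lambda(t/B_n)^n \to e^{-t^2/2}$ and a Gaussian limit. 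When $p<2$, slow variation of $L$ together with the one-sided tail condition $G(t)=0$ for $t\ll 0$ enters through a Karamata-type expansion
$$\int (e^{it\phi_Y}-1)\psi_0\, d\mb P_Y \;=\; it\,c_1 - c_p|t|^p L(1/|t|)\bigl(1+i\,\eta\,\mathrm{sgn}(t)\bigr)+o\bigl(|t|^p L(1/|t|)\bigr),$$
with explicit constants $c_p,\eta$ reflecting the one-sidedness. Defining $B_n$ by $nL(B_n)=B_n^p$ (well-posed by Karamata inversion) and choosing $A_n$ appropriately, $\lambda(t/B_n)^n$ converges to the characteristic function of a one-sided stable law $\mc Z$ of index $p$, and L\'evy's continuity theorem delivers convergence in distribution under $\pi_Y$. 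Comparability of $\pi_Y$ with $\mb P_Y$ via the bounded Lipschitz density $\psi_0$ then transfers the conclusion to $\mb P_Y$ without altering the limit.

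The hard part is making the expansion of $\lambda(t)$ rigorous when $p<2$. Since $\phi_Y$ is unbounded on $Y$, the function $e^{it\phi_Y}$ is not Lipschitz in $x$, so one cannot bound $\|\hat P_Y(t)-P_Y\|_{\mc B}$ by a constant multiple of $|t|$ directly; this is precisely where a truncation argument is needed, splitting $Y$ according to the value of $\tau_Y$ and using the tail bound $\mb P_Y(\tau_Y>n)=\mc O(n^{-\gamma})$ from Proposition \ref{Prop:As1and2valid} (together with the approximation $\phi_Y\approx\phi(0)\tau_Y$ noted after Theorem \ref{Thm:LimitUnfold}) to obtain H\"older continuity of $t\mapsto \hat P_Y(t)$ with the exponent demanded by $p$. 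Once this modulus of continuity is in place, the spectral-gap machinery developed in Section \ref{Sec:DecayRen} combined with the abstract framework in \cite{aaronson2001local} and \cite{Gouezel2} closes the argument.
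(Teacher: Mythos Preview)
Your outline follows the same Nagaev--Guivarc'h strategy as the paper, and the overall structure (twisted operators, perturbed leading eigenvalue, Aaronson--Denker expansion for $p<2$, Liverani-type CLT for $p>2$) is exactly what the paper invokes. Two technical points differ from what you anticipate. First, there is no need to enlarge the Banach space to accommodate $\bo\omega$-dependence: although $\phi_Y$ depends on $\bo\omega$, the operator $P_{Y,t}\psi=P_Y(e^{it\phi_Y}\psi)$ integrates out $\bo\omega$ and maps $\mc B$ to itself, so the paper works entirely on $\mc B$. Second, your proposed truncation argument to obtain H\"older continuity is not the route taken, and in fact the paper obtains the stronger estimate $\|P_{Y,t}-P_{Y,s}\|\le C|t-s|$ directly. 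On each branch $J\in\mc J^1_{\omega_0,\ldots,\omega_{n-1}}$ the Lipschitz constant of $e^{it\phi_Y}$ is at most $n|t|\|\phi\|_{\Lip}$, and this growth is compensated by the distortion bound $1/(f^n_{\bo\omega})'\le K|J|$, so the total contribution is controlled by $|t|\int\sum_n n|J^1_{\omega_0,\ldots,\omega_{n-1}}|\,d\nu^{\N_0}=|t|\int\tau_Y\,d\mb P_Y$, which is finite by Kac's lemma. The same mechanism yields a uniform Lasota--Yorke inequality for $P_{Y,t}$ for small $t$. Your truncation-plus-tail-bound approach would presumably also work but gives a weaker modulus; the paper's branch-by-branch estimate with Kac is both simpler and sharper.
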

\begin{proof}
Calling $P_{Y,t}(\cdot):=P_Y(e^{it\phi_Y}\cdot)$, it follows that
\[
\mb E[e^{itS^n_Y\phi_Y}]=\int P^n_{Y,t}1d\mb P.
\]
The relation above provides the foundation to the Nagaev-Guivarc'h approach 
that  recovers information on the characteristic function of $S^n_Y$
from spectral properties of the family $(P_{Y,t})_t$.

\paragraph{Step 1} The operator $P_{Y,t}$ has a spectral gap in $\mc B$ for all
sufficiently small $t$.  We show how to prove this fact directly with  computations analogous
to those carried out in the proof of point (iii) in Proposition
\ref{Prop:PropRenewal}. However, in that proposition $\phi_Y=\tau_Y$,
i.e. $\phi=1$, and therefore $\phi_Y$ is piecewise constant on every fibre $\Omega\times\{x\}$. 
When this is not the case, $\phi_Y$ remains piecewise Lipschitz on fibres, where the fibres are
partitioned according to the return time to $Y$. However, the Lipschitz constants on these
pieces are typically not bounded, so the previous argument needs some extra care. 

One needs to estimate $|P^k_{Y,t}(\psi)|_{\Lip}$, given by
$$
P^k_{Y,t}\psi(x)=\int d\nu^{\N_0}(\bo\omega)
\sum_{n\ge k}\sum_J P_{\omega_{n-1}}\cdots P_{\omega_0}(\chi_Je^{itS_k\phi_Y}\psi)(x),
$$
where the $J$ summation is over $\mathcal J^k_{\omega_0,\ldots,\omega_{n-1}}$, the 
collection of intervals returning to $Y$ for the $k$th time at time $n$.
Now fix $x<y$ in $[\frac12,1]$; $\bo\omega\in[\alpha,\beta]^{\N_0}$, $k\le n$
and $J\in\mathcal J^k_{\omega_0,\ldots,\omega_{n-1}}$. Let $x',y'$
be respectively the points in $(f_{\bo\omega}^n)^{-1}(x)$ and $(f_{\bo\omega}^n)^{-1}(y)$ 
that lie in $J$. Then the contribution to $P^k_{Y,t}\psi(y)-P^k_{Y,t}\psi(x)$ coming from the 
interval $J$ is
$$
\frac{e^{itS_k\phi_Y(y')}\psi(y')}{(f_{\bo\omega}^n)'(y')}-
\frac{e^{itS_k\phi_Y(x')}\psi(x')}{(f_{\bo\omega}^n)'(x')}.
$$
We estimate the absolute value of this quantity by
$$
\frac{t|S_k\phi_Y(y')-S_k\phi_Y(x')|\,\|\psi\|_\Lip}
{(f_{\bo\omega}^n)'(y')}
+\frac{|\psi(y')-\psi(x')|}{(f_{\bo\omega}^n)'(y')}+|\psi(x')|\left|\frac{1}{(f_{\bo\omega}^n)'(y')}
-\frac{1}{(f_{\bo\omega}^n)'(x')}\right|
$$
The combined contribution from the second and third terms as $n$ runs over $k,k+1,\ldots$
and $J$ runs over $\mathcal J^k_{\bo\omega}$ and integrated over $\bo\omega$
is estimated exactly as in Proposition
\ref{Prop:PropRenewal}. It remains to estimate the combined contribution from the first term.
Since the $f_\omega$'s are non-contracting and the $k$th return time is $n$,
the first term $|S_k\phi_Y(y')-S_k\phi_Y(x')|$
may be estimated by $n|\phi|_\Lip||x-y|$.   As before, the distortion estimates give
$1/(f_{\bo\omega}^n)'(y)\le K|J|$.
Hence the combined contribution to $|P^k_{Y,t}(\psi)(y)-P^k_{Y,t}(\psi)(x)|$ 
coming from the first terms in the above display (as $n$, $J$ and $\bo\omega$ vary) is bounded above by 
\begin{align*}
&Kt\|\phi\|_\Lip\|\psi\|_\Lip|x-y|\int d\nu^{\N_0}(\bo\omega)
\sum_n\sum_{J\in\mathcal J^k_{\omega_0,\ldots,\omega_{n-1}}} n|J|\\
=&Kt\|\phi\|_\Lip\|\psi\|_\Lip|x-y| \int \tau_k(\bo\omega,x)\,d\mathbb P(\bo\omega,x),
\end{align*}
where $\tau_k(\bo\omega,x)$ denotes the $k-$th return time to $Y$. 
The measure $\mathbb P$ agrees  with $\nu^{\N_0}\otimes\pi_Y$ (where $\pi_Y$ is the absolutely
continuous invariant measure for $F_Y$, i.e. the density of $\pi_Y$ is the unique fixed point of $P_Y$ on $\mc B$)
 up to a multiplicative factor that is uniformly 
bounded above and below.
Hence the above displayed quantity may be estimated by
\begin{align*}
&\leq K't\|\phi\|_\Lip\|\psi\|_\Lip|x-y|\int\tau_k(\bo\omega,x)\,d\nu^{\N_0}\otimes\pi_Y\\
&=kK't\|\phi\|_\Lip\|\psi\|_\Lip|x-y| \int \tau_1(\bo\omega,x)\,d\nu^{\N_0}\otimes\pi_Y
\end{align*}
By Kac's Lemma, the integral is finite, so the above reduces to 
$Ct\|\psi\|_\Lip|x-y|$. Taking $t$ sufficiently small, we see that the additional contribution to
the estimate of $|P^k_{Y,t}(\psi)|_\Lip$ from that appearing in the earlier proposition is
$Ct\|\psi\|_\Lip$. In particular, for sufficiently small $t$, one obtains a Lasota-Yorke inequality analogous to the one in (\ref{eq:LY}).

\paragraph{Step 2}The family of operators 
$(P_{Y,t})$ (acting on $\mathcal B$) is continuous in $t$. To see this, we estimate
$\|(P_{Y,t}-P_{Y,s})(\psi)\|_\Lip$. As above, $|(P_{Y,t}-P_{Y,s})(\psi)(y)- (P_{Y,t}-P_{Y,s})(\psi)(x)|$
is expressed as an integral over $\nu^{\N_0}$ of a countable sum (one term for each $n$). 
Fix $x<y$ in $[\frac 12,1]$, $\bo\omega$ and $n$ as above. Since we are considering first
returns, there is exactly one interval, $J$, in $\mathcal J^1_{\omega_0,\ldots,\omega_{n-1}}$. 
Letting $x'$ and $y'$ be the preimages of $x$ and $y$ under $(f_{\bo\omega}^n)^{-1}$ in $J$,
the corresponding contribution to $\Delta:=|(P_{Y,t}-P_{Y,s})(\psi)(y)- (P_{Y,t}-P_{Y,s})(\psi)(x)|$ is
estimated by
\[
\left|\frac{e^{is\phi_Y(y')}(e^{i(t-s)\phi_Y(y')}-1)\psi(y')}{(f_{\bo\omega}^n)'(y')}-
\frac{e^{is\phi_Y(x')}(e^{i(t-s)\phi_Y(x')}-1)\psi(x')}{(f_{\bo\omega}^n)'(x')}\right|
\]
(where we dropped the $\bo \omega$-dependence of $\phi_Y$ for clarity of the notation), which we bound as a difference of products using the triangle inequality as usual, giving rise to the sum
of four terms.
We use the estimates $|(e^{i(t-s)\phi_Y(y')}-e^{i(t-s)\phi_Y(x')})/(f_{\bo\omega}^n)'(\xi)|\le n|t-s| K\|\phi\|_\Lip|y-x|$ for any $\xi$ between $x'$ and $y'$;
$|e^{i(t-s)\phi_Y(x')}-1|\le n|t-s|K\|\phi\|_\Lip$; $|1/(f_{\bo \omega}^n)'(y')-1/(f_{\bo\omega}^n)(x')|
\le K|J||x-y|$ (obtained from the distortion bound).
Combining these estimates, we find that the contribution to $\Delta$ coming from the $n$th interval
$J_{\omega_0,\ldots,\omega_{n-1}}$ from each of the four terms described above is bounded above by
$Cn|t-s|\|\phi\|_\Lip|x-y||J|\|\psi\|_\Lip$. Summing over $n$ and integrating, we find
\begin{align*}
\|P_{Y,t}-P_{Y,s}\|_\Lip&\le 
C|t-s|\|\phi\|_\Lip\int_\Omega \sum_n n|J_{\omega_0,\ldots,\omega_{n-1}}|\,d\nu^{\N_0}(\bo\omega)\\
&=C|t-s|\|\phi\|_\Lip\int_{\Omega\times[\frac 12,1]}\tau_1(
\bo\omega,x)\,dm(x)\,d\nu^{\N_0}(\bo\omega).
\end{align*}
As in Step 1, the integral is finite using Kac's Lemma, so that $\|P_{Y,t}-P_{Y,s}\|_\Lip\le C'|t-s|$. 

\paragraph{Step 3} The spectral gap of $P_{Y,t}$ together with continuity 
of the family $\{P_{Y,t}\}_t$ implies that $P_{Y,t}$ has a simple eigenvalue  
$\lambda(t)$ such that $\lambda(t)\rightarrow 1$ as $t\rightarrow 0$. 

 If $p>2$, then the distribution random variable with distribution $G$ is square summable, and results from \cite{liverani1996central} show that the limit law is a Gaussian (see also \cite{Gouezel2}). 

If $p<2$, an expansion of $\lambda(t)$ in $t$ can be obtained following the proof of Theorem 5.1 
from \cite{aaronson2001local}. The expansion will depend on the distribution $G$. One of the main requirements here is that $G$ is in the domain of attraction of a stable law or, equivalently, that there are $L(x)$, a slowly varying function as $x\rightarrow +\infty$, and $c_1,c_2\ge 0$ with $c_1+c_2>0$ such that $G(x)=1-x^{-p}L(x)(c_1+o(1))$ as $x\rightarrow +\infty$ and $G(x)=|x|^{-p}L(-x)(c_2+o(1))$ for $x\rightarrow -\infty$.  These conditions are ensured by the assumptions on $G$ with $c_2=0$. One can then conclude  the statement of the proposition as outlined in 
Theorem 6.1 from \cite{aaronson2001local}.  The gist of the argument is  that 
$\int P_{Y,\frac{1}{B_n}}^n1$ approaches $\lambda(\frac{1}{B_n})^n$. This 
information and the expansion of $\lambda(t)$ found at the previous step can 
be used to determine the limit for the law of $\frac{S^n_Y\phi_Y-A_n}{B_n}$ 
when $n\rightarrow \infty$. 
 \end{proof}

The following general theorem allows us to \say{unfold} the limit theorem from the 
induced system $F_Y$ to the original system $F$.
\begin{theorem}[Theorem 4.8 \cite{Gouezel2}]\label{Thm:GouezUnfold}
Let $T:X\rightarrow X$ be an ergodic probability preserving map
w.r.t. the measure $\mu$, let $(M_n)$ and $(B_n)$ be two sequences
of integers which are regularly varying with positive indices, let
$A_n\in\R$, and let $Y\subset X$ be a subset of positive $\mu$
measure. Denote by $\mu_Y$ the probability measure induced on $Y$.
Let $\tau_Y$ be the first return time to $Y$, $T_Y:= T^{\tau_Y}$ the
induced transformation, $\phi:X\rightarrow \R$ a measurable function,
$\phi_Y(x):=\sum_{i=0}^{\tau_Y(x)-1}\phi\circ T^i(x)$, $S_Y^n
\phi_Y=\sum_{i=0}^{n-1}\phi_Y\circ T_Y^i(x)$.
Assume that $(S_Y^n\phi_Y-A_n)/B_n\rightarrow \mc Z$ in distribution,
that $M_n$ is such that $(S_Y^n\tau_Y-  n/ \mu(Y))/M_n$ is tight
and $\max_{0\leq k\leq M_n}|S^k_Y \phi_Y|/B_n$ tends in
probability to zero.
Then
$(S^n\phi-A_{\lfloor{n\mu(Y)}\rfloor})/B_{\lfloor{n\mu(Y)}\rfloor}\rightarrow
\mc Z$ in distribution.
\end{theorem}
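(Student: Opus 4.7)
The plan is to compare the Birkhoff sum $S^n\phi$ of the unfolded system with the induced Birkhoff sum $S_Y^k\phi_Y$ evaluated at the deterministic index $m_n:=\lfloor n\mu(Y)\rfloor$, then apply Slutsky's theorem once the normalised difference is shown to vanish in probability. For $x\in Y$ let $N(x,n):=\max\{k\ge 0\colon S_Y^k\tau_Y(x)\le n\}$ denote the number of complete returns to $Y$ within the first $n$ iterates of $T$. Grouping the iterates between consecutive returns gives the decomposition
\[
S^n\phi(x)=S_Y^{N(x,n)}\phi_Y(x)+R(x,n),
\]
where $R(x,n)$ is an incomplete last excursion satisfying $|R(x,n)|\le|\phi|_Y\circ T_Y^{N(x,n)}(x)$ with $|\phi|_Y:=\sum_{i=0}^{\tau_Y-1}|\phi|\circ T^i$.

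The first key step is to control the fluctuation of $N(x,n)$. Kac's lemma gives $\int \tau_Y\,d\mu_Y=1/\mu(Y)$, and the renewal identity $\{N(x,n)<k\}=\{S_Y^k\tau_Y(x)>n\}$ converts the assumed tightness of $(S_Y^k\tau_Y-k/\mu(Y))/M_k$ directly into the tightness of $(N(x,n)-m_n)/M_n$, i.e.\ $N(x,n)-m_n=O_P(M_n)$. On the event $|N(x,n)-m_n|\le CM_n$, the difference of induced Birkhoff sums can be written as $\pm S_Y^{|D|}\phi_Y\circ T_Y^{m_n\wedge N(x,n)}$ with $|D|\le CM_n$, whose absolute value is bounded by $\max_{0\le k\le CM_n}|S_Y^k\phi_Y|\circ T_Y^{m_n\wedge N(x,n)}$. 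Using $T_Y$-invariance of $\mu_Y$, the assumed $\max_{0\le k\le M_n}|S_Y^k\phi_Y|/B_n\to 0$ in probability then forces
\[
\frac{S_Y^{N(x,n)}\phi_Y(x)-S_Y^{m_n}\phi_Y(x)}{B_{m_n}}\longrightarrow 0
\]
in $\mu_Y$-probability (the switch from $B_n$ to $B_{m_n}$ being harmless because $B_n$ is regularly varying of positive index, so $B_{m_n}/B_n$ is bounded above and below). The same type of argument with $k=1$ shows $R(x,n)/B_{m_n}\to 0$ in probability.

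Combining these estimates with the hypothesis $(S_Y^{m_n}\phi_Y-A_{m_n})/B_{m_n}\to \mc Z$ (the assumed limit law applied along the subsequence $k=m_n$) and Slutsky's theorem gives the desired convergence in distribution under $\mu_Y$. To transfer this convergence from $\mu_Y$ to $\mu$, one uses that $T$ is ergodic and $\mu(Y)>0$, so for $\mu$-a.e.\ $x$ the first entry time $\tau^+(x):=\inf\{n\ge 0\colon T^nx\in Y\}$ is finite. Writing $S^n\phi(x)=S^{\tau^+(x)}\phi(x)+S^{n-\tau^+(x)}\phi(T^{\tau^+(x)}x)$ and noting that $B_n\to\infty$ while the boundary term $S^{\tau^+(x)}\phi(x)$ is a.s.\ bounded, the normalised sum for $x\in X$ has the same limit law as the normalised sum started at $T^{\tau^+(x)}x\in Y$.

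The most delicate point is the passage from the tightness of $(S_Y^k\tau_Y-k/\mu(Y))/M_k$ to the bound on the difference of induced Birkhoff sums: one must stabilise the "max over $k\le M_n$" condition under both enlargement of the index (to $CM_n$) and composition with the random shift $T_Y^{m_n\wedge N(x,n)}$. The clean way to handle this is through a union-bound combined with $T_Y$-invariance of $\mu_Y$, so that the global-in-probability max bound survives evaluation along a random orbital position. Everything else is essentially bookkeeping built on this core probabilistic estimate.
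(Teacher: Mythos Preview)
The paper does not prove this statement: it is quoted as Theorem~4.8 of \cite{Gouezel2} and used as a black box in the proof of Theorem~\ref{Thm:LimitUnfold}. There is therefore no proof in the paper to compare your proposal against.

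Your outline is the standard unfolding argument and is essentially what Gou\"ezel does in the cited reference: decompose $S^n\phi$ into complete induced blocks plus a boundary term, show that the random block count $N(x,n)$ deviates from $m_n$ by $O_P(M_n)$ via the renewal duality $\{N(x,n)<k\}=\{S_Y^k\tau_Y>n\}$, and then absorb the resulting fluctuation using the max hypothesis together with $T_Y$-invariance of $\mu_Y$.

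Two points deserve more care than you give them. First, the remainder $R(x,n)$ is an \emph{incomplete} block, so the bound $|R(x,n)|\le|\phi|_Y\circ T_Y^{N(x,n)}$ involves the induced version of $|\phi|$, not $|\phi_Y|$; the max hypothesis controls only the latter. The clean fix is to write $R(x,n)=S_Y^{N(x,n)+1}\phi_Y - S_Y^{N(x,n)}\phi_Y - (\text{tail beyond }n)$ and observe that the first difference falls under the max hypothesis while the tail is handled symmetrically. Second, your transfer from $\mu_Y$ to $\mu$ via the first-entry map $x\mapsto T^{\tau^+(x)}x$ is incomplete: the pushforward of $\mu$ under this map is absolutely continuous with respect to $\mu_Y$ but is not $\mu_Y$ itself, so you still need the fact (Eagleson's theorem, or the version in Zweim\"uller) that distributional convergence of centred and normalised Birkhoff sums under one probability measure absolutely continuous with respect to the invariant measure implies the same under any other. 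This is precisely how Gou\"ezel closes the argument.
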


Before proving Theorem \ref{Thm:LimitUnfold}, we show that the tails of $\phi_Y$ and $\tau_Y$
satisfy the hypotheses of Proposition \ref{prop:inducedstablelaw}.
Recall that $\nu$ is a probability measure supported on $[\alpha,\beta]$ with
$\alpha<1$; $\alpha$ in the topological support of $\nu$;
and $\phi$ is a Lipschitz function on $[0,1]$. For the rest of this argument we will assume $\phi(0)>0$. Then there is a bounded number of times one can have
$\phi(x_n) < 0$ which implies the condition $G(t)=0$ for all $t < t_0$. The case $\phi(0) <0$ will be addressed at the very end of this section. The following proposition shows that the the distribution $G$ of $\phi_Y$ is a regularly varying function independently of the choice of $\nu$, i.e. it can always be written as the product of a monomial and a slowly varying function. Furthermore, the index of the distribution depends only on the minimum of the topological support of $\nu$.
\begin{proposition}\label{prop:phiYinbasin}
Let $\nu$ and $\phi$ be as above. Then $G(t):=\mb P_Y\{(\bo\omega,x):\phi_Y(\bo\omega,x)\le t\}$
satisfies $1-G(t)=1/t^{1/\alpha}L(t)$, where $L(t)$ is a slowly varying function.
\end{proposition}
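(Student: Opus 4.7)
My plan is to reduce the analysis of the tail of $\phi_Y$ to that of $\tau_Y$, and then to establish matching upper and lower bounds of the form $n^{-1/\alpha}\cdot(\text{slowly varying})$ on the tail of $\tau_Y$. The main obstacle is verifying the slow variation in full generality when $\nu(\{\alpha\})=0$, where Proposition \ref{Prop:As1and2valid} alone is not sharp enough.

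For the reduction, write $\phi(x)=\phi(0)+\phi_1(x)$ with $\phi_1$ Lipschitz and $\phi_1(0)=0$, so $|\phi_1(x)|\le C|x|$. Then $\phi_Y=\phi(0)\tau_Y+S$ with $S=\sum_{i<\tau_Y}\phi_1\circ\pi_2\circ F^i$. During an excursion of length $\tau_Y=n$, the forward iterates $y_i=\pi_2 F^i(\bo\omega,x)\in[0,\tfrac12)$ can be bounded above by the deterministic orbit of $f_\beta$ (the slowest map in the family, by the monotonicity recalled below); since $f_\beta^{-n}(\tfrac12)\sim c n^{-1/\beta}$, this gives $|S|=\mc O\bigl(n^{\max(0,1-1/\beta)}\bigr)$, which is of strictly smaller order than $\tau_Y$. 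Hence $\phi_Y/(\phi(0)\tau_Y)\to 1$ in a sense strong enough to transfer regularly varying tails from $\tau_Y$ to $\phi_Y$; the assumption $\phi(0)>0$ also ensures $G(t)=0$ for $t$ sufficiently negative.

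For the lower bound, I would exploit the monotonicity $\omega\mapsto f_\omega$: since $(2x)^\omega$ is decreasing in $\omega$ for $x\in(0,\tfrac12)$, the map $f_\omega$ is pointwise decreasing in $\omega$, and hence $f_\omega^{-1}$ is pointwise increasing in $\omega$. Because every $\omega_i\ge\alpha$, composing inverses yields the pointwise bound $x_n(\bo\omega)\ge x_n^{\det(\alpha)}$ for every $\bo\omega$, where $x_n^{\det(\alpha)}$ is the corresponding iterate for the single map $f_\alpha$. The classical asymptotic $x_n^{\det(\alpha)}\sim (c\alpha n)^{-1/\alpha}$ together with \eqref{Eq:EstTailstauY} then gives $\mb P_Y(\tau_Y>n)\ge c'n^{-1/\alpha}$.

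The main obstacle is the matching upper bound and slow variation, since Proposition \ref{Prop:As1and2valid} only yields $\mb P_Y(\tau_Y>n)=\mc O(n^{-\gamma})$ for each fixed $\gamma<1/\alpha$. My plan is to optimise the Hoeffding estimate from the proof of that proposition by allowing $\gamma=\gamma(n)\nearrow 1/\alpha$ at a logarithmic rate (for example $\gamma(n)^{-1}=\alpha+1/\log n$), with $p_0(n)=\nu([\alpha,\gamma(n)^{-1}])$ positive for every $n$ because $\alpha$ lies in the topological support of $\nu$. Running the concentration argument with $\epsilon=\tfrac12p_0(n)$ then produces an upper bound
\[
\mb P_Y(\tau_Y>n)\le \tfrac12 e^{-np_0(n)^2/2}+\bigl(np_0(n)/2\bigr)^{-\gamma(n)}=n^{-1/\alpha}L^+(n),
\]
in which $n$ enters $L^+(n)$ only through the logarithmic scale. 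Combined with the $n^{-1/\alpha}$ lower bound from the previous paragraph and the monotonicity of $n\mapsto\mb P_Y(\tau_Y>n)$, a de Haan--type argument (turning the sandwich and the condition $\log h(n)/\log n\to -1/\alpha$ for monotone $h$ into the defining limit $L_0(\lambda n)/L_0(n)\to 1$ along geometric subsequences) yields that $L_0(n)=n^{1/\alpha}\mb P_Y(\tau_Y>n)$ is slowly varying. Transferring back to $\phi_Y$ through the reduction step then completes the proof.
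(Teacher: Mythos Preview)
Your reduction of $\phi_Y$ to $\phi(0)\tau_Y$ (step 1) and the deterministic lower bound $\mb P_Y(\tau_Y>n)\ge c'n^{-1/\alpha}$ via comparison with $f_\alpha$ (step 2) are both sound, and are essentially the same auxiliary observations the paper makes. The fatal gap is step 4.

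The ``de Haan--type argument'' you invoke does not exist: a monotone function $h$ sandwiched between $c'n^{-1/\alpha}$ and $n^{-1/\alpha}L^+(n)$ with $L^+$ slowly varying, and satisfying $\log h(n)/\log n\to -1/\alpha$, need \emph{not} be regularly varying. Take $h(n)=n^{-1/\alpha}(2+\sin\log n)$: for $\alpha<1$ this is strictly decreasing, it is sandwiched between $n^{-1/\alpha}$ and $3n^{-1/\alpha}$, and $\log h(n)/\log n\to-1/\alpha$; yet $L_0(n)=2+\sin\log n$ is not slowly varying because $L_0(e^\pi n)/L_0(n)$ oscillates between $1/3$ and $3$. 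Monotonicity together with a sandwich by regularly varying bounds of the same index is simply too weak to force regular variation. Step 3 cannot rescue this: the Hoeffding optimisation only produces bounds whose gap is governed by the (completely uncontrolled) rate at which $\nu([\alpha,\alpha+s])\to 0$, so you never pin down $L_0$ up to asymptotic equivalence.

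This is exactly why the paper proceeds differently. It does not try to bound the tail from both sides; it \emph{identifies} it. The paper introduces the explicit function
\[
q(x)=\int_x^{1/2}\frac{\phi(t)}{t\int(2t)^\gamma\,d\nu(\gamma)}\,dt,
\]
proves a concentration estimate (Lemmas \ref{lem:Intime} and \ref{lem:rettimeest}) showing that $\mb P_Y(\phi_Y>t)=(1+o(1))\,q^{-1}(t)$, and then shows directly that $q^{-1}$ is regularly varying of index $-1/\alpha$. The key step there is that $s\mapsto\int e^{-s\gamma}\,d\nu(\gamma)$ is log-convex with slope tending to $-\alpha$, which forces $u\mapsto\int(2/u)^\gamma\,d\nu(\gamma)$ to be regularly varying of index $-\alpha$; Karamata-type theorems then handle the integral and the inverse. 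No analogue of this identification appears in your outline, and without it the proposition cannot be concluded.
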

The proof of this proposition requires a couple of lemmas. Let 
\[
q(x)=\int_x^{\frac 12}\frac {\phi(t)}{t\left[\int (2t)^\gamma\,d\nu(\gamma)\right]}\,dt.
\]
In the function above, $t\left[\int (2t)^\gamma\,d\nu(\gamma)\right]$ is the average  size (integrating $\gamma$) of a one-step displacement $f_\gamma(t)-t$. The random i.i.d. nature of the composed maps implies that the number of steps needed to escape from the point $x\approx 0$ to the inducing set, concentrates around a given value, and $q(x)$ is expected to be a proxy for the value around which $\phi_Y(\bo \omega,x')$ concentrates, 
where $x'$ is the unique preimage of $x$ in $Y$.  As a convenient abuse of notation in the following computations, we will write, $\phi_Y(\bo \omega,x)$ when $x\approx 0$, with the convention that this means $\phi_Y(\bo \omega,x')$.
The following lemmas make the above heuristic precise.

\begin{lemma}\label{lem:rettimeest}
For all $\delta>0$, there exists an $x_0$ such that if $x\le x_0$,
then 
\begin{align*}
\nu^\N\{\bo\omega:\phi_Y(\bo\omega,z)\ge q(x)\}\ge 1-\delta&\quad\text{for all $z\le (1-\delta)x$}\\
\nu^\N\{\bo\omega: \phi_Y(\bo\omega,z)\ge q(x)\}\le \delta x&\quad\text{for all $z\ge (1+\delta)x$}.
\end{align*}

In particular, if $x\le x_0$ and $t=q(x)$, then 
$$
(1-2\delta)x\le \mathbb P_Y(\{(\bo\omega,y)\colon \phi_Y(\bo\omega,y)> t\})\le (1+2\delta)x.
$$
\end{lemma}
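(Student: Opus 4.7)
The strategy is to identify $q(x)$ as the typical value of $\phi_Y(\bo\omega,z)$ for orbits starting close to $0$ and to control the fluctuations via a martingale argument. The function $q$ is designed so that, under $\omega\sim\nu$, its average decrement is $\phi$: Taylor-expanding $q$ at $x$ using $f_\omega(x)-x=2^\omega x^{1+\omega}$ and $q'(x)=-\phi(x)/[x\int(2x)^\gamma d\nu(\gamma)]$ gives
\[
q(x)-q(f_\omega(x))=\frac{\phi(x)(2x)^\omega}{\int(2x)^\gamma d\nu(\gamma)}+O\!\bigl(\phi(x)(2x)^{2\omega-\alpha}\bigr),
\]
so $\mb E_\nu[q(x)-q(f_\omega(x))]=\phi(x)+O(\phi(x)x^\alpha)$. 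Setting $z_n:=f_{\bo\omega}^n(z)$ and $N:=\inf\{n\ge 1: z_n\ge\tfrac12\}$, a telescopic sum yields $q(z)-q(z_N)=\sum_{n<N}\phi(z_n)+M_N+(\text{lower order})$, where $M_N:=\sum_{n<N}\phi(z_n)(\xi_n-1)$ and $\xi_n:=(2z_n)^{\omega_n}/\int(2z_n)^\gamma d\nu(\gamma)$ has conditional mean $1$ with respect to $\mc F_n:=\sigma(\omega_0,\ldots,\omega_{n-1})$. Since $z_N\in[\tfrac12,1)$ forces $q(z_N)=O(1)$, and incorporating the $\phi(\tfrac{z+1}{2})=O(1)$ term coming from the notational convention, one obtains $\phi_Y(\bo\omega,z)=q(z)-M_N+O(1)$.

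Next, the concentration of $M_N$. The lower bound $\int(2t)^\gamma d\nu(\gamma)\ge c(2t)^\alpha$ (from $\alpha\in\mathrm{supp}\,\nu$) controls the conditional variance of $\phi(z_n)(\xi_n-1)$ by a constant multiple of $\phi(z_n)^2$, and a Freedman/Bernstein-type martingale inequality yields
\[
\nu^\N\bigl(|M_N|>\lambda\bigr)\le C\exp\!\bigl(-c\lambda^2/q(z)\bigr)
\]
within the Gaussian range. From $\int(2t)^\gamma d\nu\asymp(2t)^\alpha$ near $0$ one reads off $q(t)\asymp t^{-\alpha}$ together with the linear comparisons $q((1\pm\delta)x)=(1\pm c\delta)q(x)+o(q(x))$ for small $\delta$ and $x$.

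The two pointwise estimates now follow. For $z\le(1-\delta)x$ one has $q(z)\ge q(x)+c\delta q(x)$, so $\{\phi_Y(\bo\omega,z)\ge q(x)\}$ contains $\{M_N\le\tfrac{c\delta}{2}q(x)-O(1)\}$ and has $\nu^\N$-probability at least $1-C\exp(-c'\delta^2 q(x))\ge 1-\delta$ once $x\le x_0$. For $(1+\delta)x\le z<\tfrac12$, the same event forces $-M_N\ge\tfrac{c\delta}{2}q(x)-O(1)$, which happens with probability at most $C\exp(-c'\delta^2 q(x))$; as $q(x)\asymp x^{-\alpha}$ dominates $|\log x|$ as $x\to 0$, this is $\le\delta x$ for $x\le x_0$. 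For $z\ge\tfrac12$ the event is empty once $q(x)>\|\phi\|_\infty$, since only one iterate is taken before returning to $Y$. The ``in particular'' clause is obtained by writing $\mb P_Y=\nu^\N\otimes 2\,dm|_{[1/2,1]}$, substituting $z=2y-1$, and splitting $\int_0^1 dz$ into the three regions $\{z\le(1-\delta)x\}$, $\{|z-x|<\delta x\}$, $\{z\ge(1+\delta)x\}$, whose $z$-integrated contributions sandwich $\mb P_Y(\phi_Y>q(x))$ between $(1-\delta)^2 x$ and $(1-\delta)x+2\delta x+\delta x\le(1+3\delta)x$ (modulo relabelling $\delta$).

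The main technical obstacle is that the martingale increments $\phi(z_n)(\xi_n-1)$ are not uniformly bounded as $z_n\to 0$ when $\nu(\{\alpha\})=0$, because $\xi_n$ can be large for $\omega_n$ near $\alpha$. A truncation separating $\omega_n\in[\alpha,\alpha+\eta]$ (controlled through the conditional second moment) from $\omega_n\ge\alpha+\eta$ (where $\xi_n$ is uniformly bounded), combined with a standard martingale Bernstein inequality, should handle this. A second delicate point is making the Karamata-type asymptotic $\int(2t)^\gamma d\nu(\gamma)\asymp(2t)^\alpha$ sharp enough to yield the clean linear-in-$\delta$ comparisons for $q$.
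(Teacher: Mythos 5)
Your potential-function/martingale argument is a genuinely different route from the paper's proof, which instead partitions $(0,\tfrac 12)$ into annuli $I_n = [e^{-\sqrt n},e^{-\sqrt{n-1}})$, shows (Lemma 5.8) that the time spent in each $I_n$ concentrates around $N_n$ via Hoeffding's inequality applied to the bounded i.i.d.\ variables $(2a_n)^{\gamma_i-\alpha}$, and then builds $\phi_Y$ from the Riemann sum $\sum_j N_j\phi(a_j)$. Your telescoping identity $\phi_Y \approx q(z) - M_N + O(1)$ is elegant and correctly identifies $q$ as the right potential, and you correctly flag the central difficulty; but as written, there are gaps that are more serious than a "technical obstacle" to be handled by a cited-but-unspecified truncation.

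The key false step is the claim that $\int(2t)^\gamma\,d\nu(\gamma)\ge c(2t)^\alpha$ follows "from $\alpha\in\mathrm{supp}\,\nu$." This inequality holds only when $\nu(\{\alpha\})>0$. When $\nu$ has no atom at $\alpha$ — the generic case the lemma must cover, and precisely the case the renewal argument of this paper is designed to treat that prior works could not — one only has $\int(2t)^\gamma\,d\nu(\gamma)\ge \nu([\alpha,\alpha+\eta])(2t)^{\alpha+\eta}$, which is of strictly lower order than $(2t)^\alpha$. Consequently the conditional second moment $\mathbb{E}[\xi_n^2\mid\mathcal F_n] = \int(2z_n)^{2\gamma}d\nu\big/\big(\int(2z_n)^\gamma d\nu\big)^2$ is \emph{not} bounded: for $\nu$ with a continuous density on $[\alpha,\beta]$ it grows like $|\log z_n|$, and so does $\sup_{\omega}\xi_n$. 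The concentration bound you invoke, $\nu^{\mathbb N}(|M_N|>\lambda)\le C\exp(-c\lambda^2/q(z))$, is therefore not obtained from any Bernstein/Freedman inequality without a logarithmic loss in the exponent, and the "truncation separating $\omega_n\in[\alpha,\alpha+\eta]$ from $\omega_n\ge\alpha+\eta$" does not by itself repair this: on the bad range $[\alpha,\alpha+\eta]$ the increments are exactly where they are large and carry the bulk of the variance, so the conditional second moment there is not "controlled" — it is the unbounded part. You would need to argue that the total variance $\sum_{n<N}\phi(z_n)^2\operatorname{Var}(\xi_n\mid\mathcal F_n)$ and the worst-case increment both carry at most a factor $|\log z|$, that this still gives $\exp(-c\delta^2 q(z)/|\log z|)$, and then re-derive the $1-\delta$ and $\delta x$ conclusions with this weaker exponent; those steps are plausible but are exactly the content that is missing. (A second, smaller gap: $q(x)\asymp x^{-\alpha}$ is too strong — the paper only establishes regular variation of index $-\alpha$, i.e.\ $q(x)=x^{-\alpha}L(x)$ with $L$ slowly varying, and your linear comparison $q((1\pm\delta)x)=(1\mp\alpha\delta+o(1))q(x)$ follows from regular variation alone, so no harm, but the $\asymp$ as written is unjustified.) The paper's annular Hoeffding argument is purpose-built to sidestep all of this: within a single annulus $I_n$ the ratio of step sizes is $(a_{n-1}/a_n)^{1+\beta}\to 1$, so after rescaling by $2^\alpha a_n^{1+\alpha}$ the summands are genuinely bounded in $[0,1]$ and Hoeffding applies cleanly, with the weak lower bound on $\mathbb E X$ (when $\nu$ has no atom at $\alpha$) compensated explicitly by the size of $N_n$.
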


To prove this lemma we first show a concentration result for the number of steps needed to escape from the interval $[e^{-\sqrt{n}},e^{-\sqrt{n-1}}]$.
\begin{lemma}\label{lem:Intime}
Let the probability measure $\nu$ on $[\alpha,\beta]$ be as above. 
Let $I_n$ denote the interval $[e^{-\sqrt{n}},e^{-\sqrt{n-1}})$ and fix $\eta>0$.
For $x\in [e^{-\sqrt n},f_\alpha(e^{-\sqrt n}))$, let $T_n(\bo\omega,x)=\min\{k\colon
f_{\omega_{k-1}}\circ\ldots\circ f_{\omega_0}(x)>e^{-\sqrt{n-1}}\}$, that is the number of steps
spent in $I_n$.

For all sufficiently large $n$ and all $x\in [e^{-\sqrt n},f_\alpha(e^{-\sqrt n}))$,
$$
\nu^\N\{\bo\omega:T_n(\bo\omega,x)\in [(1-\eta)N_n,(1+\eta)N_n]\}\ge 1-2^{-n},
$$
where $N_n=(e^{-\sqrt {n-1}}-e^{-\sqrt n})/[ e^{-\sqrt n} \int (2e^{-\sqrt n})^\gamma\,d\nu(\gamma)]$.
\end{lemma}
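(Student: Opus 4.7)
The plan is to couple $T_n$ to an i.i.d.\ sum and apply Hoeffding's inequality. Set $Z_j(\bo\omega):=(2e^{-\sqrt n})^{\omega_j}$, so $\{Z_j\}_{j\ge 0}$ are i.i.d.\ under $\nu^{\N}$, bounded above by $M_n:=(2e^{-\sqrt n})^\alpha$ once $n$ is large enough that $2e^{-\sqrt n}<1$, with common mean $\mu_n:=\int(2e^{-\sqrt n})^\gamma\,d\nu(\gamma)$. Write $S_k:=Z_0+\cdots+Z_{k-1}$ and note the useful identity $N_n\mu_n=e^{\sqrt n-\sqrt{n-1}}-1\sim1/(2\sqrt n)$.

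First I would establish uniform single-step displacement estimates on $I_n$. Since $y\mapsto y(2y)^\gamma$ is increasing in $y$ and $\sup I_n/\inf I_n=e^{\sqrt n-\sqrt{n-1}}\to 1$, for every $y\in I_n$ and every $\gamma\in[\alpha,\beta]$,
\[
e^{-\sqrt n}(2e^{-\sqrt n})^\gamma\ \le\ D_\gamma(y):=y(2y)^\gamma\ \le\ \rho_n\,e^{-\sqrt n}(2e^{-\sqrt n})^\gamma,\quad
\rho_n:=e^{(1+\beta)(\sqrt n-\sqrt{n-1})}\to 1.
\]
Writing $y_j$ for the orbit of $x$, the telescoping $y_k-y_0=\sum_{j<k}D_{\omega_j}(y_j)$ combined with these bounds gives: (i) on $\{T_n>k\}$, where $y_j\in I_n$ for all $j\le k$, we have $e^{-\sqrt n}S_k\le y_k-y_0\le e^{-\sqrt{n-1}}-e^{-\sqrt n}=e^{-\sqrt n}N_n\mu_n$, hence $S_k\le N_n\mu_n$; (ii) on $\{T_n\le k\}$, using $y_0\le f_\alpha(e^{-\sqrt n})$ and $y_{T_n}>e^{-\sqrt{n-1}}$ together with the upper displacement bound on the steps $j<T_n$ (all in $I_n$), we get $\rho_n e^{-\sqrt n}S_{T_n}\ge e^{-\sqrt{n-1}}-f_\alpha(e^{-\sqrt n})$, and monotonicity of $S_k$ in $k$ then gives $S_k\ge A_n:=(e^{-\sqrt{n-1}}-f_\alpha(e^{-\sqrt n}))/(\rho_n e^{-\sqrt n})$. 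A short expansion, noting $f_\alpha(e^{-\sqrt n})-e^{-\sqrt n}=e^{-\sqrt n}(2e^{-\sqrt n})^\alpha$ is exponentially smaller than $e^{-\sqrt{n-1}}-e^{-\sqrt n}\sim e^{-\sqrt n}/(2\sqrt n)$, shows $A_n=(1+o(1))N_n\mu_n$.

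Applying Hoeffding's inequality to the bounded i.i.d.\ sum $S_k$ with $k=\lceil(1+\eta)N_n\rceil$ or $k=\lfloor(1-\eta)N_n\rfloor$ and target deviation $\sim\eta N_n\mu_n$, the comparisons (i)--(ii) yield
\[
\nu^{\N}\bigl\{T_n\notin[(1-\eta)N_n,(1+\eta)N_n]\bigr\}\ \le\ 4\exp\!\Bigl(-c_\eta\,\tfrac{N_n\mu_n^2}{M_n^2}\Bigr)
\]
for some $c_\eta>0$. To finish, since $\alpha\in\mathrm{supp}(\nu)$, pick $\delta\in(0,\alpha)$ with $\nu([\alpha,\alpha+\delta])>0$; then $\mu_n\ge\nu([\alpha,\alpha+\delta])(2e^{-\sqrt n})^{\alpha+\delta}$, so
\[
\frac{\mu_n}{M_n^2}\ \gtrsim\ (2e^{-\sqrt n})^{\delta-\alpha}\ \gtrsim\ e^{(\alpha-\delta)\sqrt n},
\]
and $N_n\mu_n^2/M_n^2\gtrsim e^{(\alpha-\delta)\sqrt n}/\sqrt n$ dominates any polynomial in $n$. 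Hence the bound is at most $2^{-n}$ for all $n$ sufficiently large.

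\textbf{Main obstacle.} The delicate point is the identification $A_n=(1+o(1))N_n\mu_n$ used in the lower-tail direction. Because $y_0$ may lie anywhere in $[e^{-\sqrt n},f_\alpha(e^{-\sqrt n}))$, we can only control $e^{-\sqrt{n-1}}-y_0$ from below by $|I_n|-e^{-\sqrt n}(2e^{-\sqrt n})^\alpha$; the hypothesis restricting the range of $x$ is exactly what forces this ``loss'' $e^{-\sqrt n}(2e^{-\sqrt n})^\alpha$ to be exponentially smaller than $|I_n|\sim e^{-\sqrt n}/(2\sqrt n)$, so that $A_n/(N_n\mu_n)\to 1$.
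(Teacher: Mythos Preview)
Your proposal is correct and follows essentially the same approach as the paper: bound each single-step displacement on $I_n$ above and below by constants times $e^{-\sqrt n}(2e^{-\sqrt n})^{\omega_j}$, reduce the question of $T_n$ lying in $[(1-\eta)N_n,(1+\eta)N_n]$ to a deviation estimate for the i.i.d.\ sum $S_k$, and apply Hoeffding. The paper normalises by $2^{-\alpha}a_n^{-(1+\alpha)}$ so as to work with variables $X_i=(2a_n)^{\gamma_i-\alpha}\in[0,1]$ and introduces a second sequence $Y_i=(2a_{n-1})^{\gamma_i-\alpha}$ for the upper displacement bound, whereas you keep the unnormalised $Z_j$ and absorb the upper--lower discrepancy into the single factor $\rho_n\to 1$; these are cosmetic differences, and your growth estimate $N_n\mu_n^2/M_n^2\gtrsim e^{(\alpha-\delta)\sqrt n}/\sqrt n$ is exactly the paper's $N(\mathbf E X)^2$ computation in different notation.
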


Notice that when a random orbit enters $I_n$ from $I_{n+1}$ the first point of $I_n$ that is encountered
lies in $[e^{-\sqrt n},f_\alpha(e^{-\sqrt n}))$. Hence the lemma is estimating the number of steps spent
in $I_n$ when entering from the left. 

\begin{proof}
Let $a_n=e^{-\sqrt n}$. If $x\in I_n$, the size of each step is $x(2x)^\gamma$ where 
$\gamma\in[\alpha,\beta]$. We approximate this above and below by $a_{n-1}(2a_{n-1})^\gamma$
and $a_n(2a_n)^\gamma$. The ratio of these step sizes is bounded above by $(a_{n-1}/a_n)^{1+\beta}$,
which approaches 1 as $n\to\infty$.


This allows us to compare the displacement of $x$ under the composition 
$f_{\gamma_k}\circ\ldots\circ f_{\gamma_1}$
(that is $f_{\gamma_k}\circ\ldots\circ f_{\gamma_1}(x)-x$) to 
$\sum_{j=1}^k a_n(2a_n)^{\gamma_j}$, the sum of the displacements
$f_{\gamma_j}(a_n)-a_n$, provided that $f_{\gamma_j}\circ\ldots\circ f_{\gamma_1}(x)$
remains in $I_n$ for $j=0,\ldots,k-1$. It is convenient (to maximize the strength
of the conclusion from Hoeffding's inequality) to rescale the displacements by a multiplicative factor of 
$2^{-\alpha} a_n^{-(1+\alpha)}=2^{-\alpha} e^{(1+\alpha)\sqrt n}$.

Let $(X_i)_{i\in\N}$ be a sequence of i.i.d.~random variables, where 
$X_i=(2a_n)^{\gamma_i-\alpha}$ and $\gamma_i$ is distributed according to $\nu$. 
In particular, $0\le X_i\le 1$ for each $i$. Similarly, set $Y_i=(2a_{n-1})^{\gamma_i-\alpha}$,
so that for $x\in I_n$, the step size $x(2x)^{\gamma_i}$ satisfies $2^\alpha a_n^{1+\alpha}X_i
\le x(2x)^{\gamma_i} \le 2^\alpha a_{n-1}^{1+\alpha} Y_i$. 
We use $\mg P$ and $\mg E$ to refer to the distribution on the $(X_i)$ and $(Y_i)$
in order to distinguish from $\mb P$ used elsewhere in the paper. Notice also that
$N_n=(a_{n-1}-a_n)/[2^\alpha a_n^{1+\alpha}\mg E X]$ (we drop the $n$ subscript in the following computations and note that the random variables here are $X= (2a_n)^{\gamma-\alpha}$ and $Y=(2a_{n-1})^{\gamma-\alpha}$).

If we define (dependent) random variables 
$D_k$ by $D_k = f_{\gamma_k}\circ\ldots\circ f_{\gamma_1}(x)- f_{\gamma_{k-1}}\circ \ldots f_{\gamma_1}(x)$, then 
provided $ f_{\gamma_k}\circ\ldots\circ f_{\gamma_1}(x)$ remains below $a_{n-1}$ we have 
$D_1+\ldots+ D_k\le Y_1+\ldots+Y_k$. Hence for $x\in [e^{-\sqrt n},f_\alpha(e^{-\sqrt n})]$, 
$2^\alpha \alpha_{n-1}^{1+\alpha}(Y_1+\ldots+Y_{(1-\eta)N})<a_{n-1}-f_\alpha(a_n)$
implies $f_{\gamma_{(1-\eta)N}}\circ 
\ldots f_{\gamma_1}(x)<a_{n-1}$. For $x\in [e^{-\sqrt n},f_\alpha(e^{-\sqrt n}))$, we have
\begin{align*}
&\mg P(f_{\gamma_{(1-\eta)N}}\circ\ldots\circ f_{\gamma_1}(x)>a_{n-1})\\
&\le \mg P\left(2^\alpha a_{n-1}^{1+\alpha}(Y_1+\ldots+Y_{(1-\eta)N})>
a_{n-1}-f_\alpha(a_n)\right)\\
&=\mg P\left(\frac{Y_1+\ldots+Y_{(1-\eta)N}}{(1-\eta)N}-\mg E Y>
\left(\frac{a_{n}}{a_{n-1}}\right)^{1+\alpha}\frac{(a_{n-1}-f_\alpha(a_n))\mg E X}
{(1-\eta)(a_{n-1}-a_n)}-\mg E Y\right)\\
&\le \mg P\left(\frac{Y_1+\ldots+Y_{(1-\eta)N}}{(1-\eta)N}-\mg E Y
>\tfrac{\eta} 2\mg E X\right)\\
&\le e^{-2(1-\eta)N(\frac{\eta} 2\mg E X)^2},
\end{align*}
The equality follows from the expression for $N_n$ and a simple manipulation 
of the inequality in parentheses.
The second inequality holds for all sufficiently large $n$ using the facts 
$a_n/a_{n-1}=1+o(1)$, $(a_{n-1}-f_\alpha(a_n))/(a_{n-1}-a_n)=1+o(1)$ and
$\mg E X=(1+o(1))\mg E Y$, which follows from the comparison 
of step sizes mentioned above; the final inequality is an application from Hoeffding's
inequality. Since $N\ge(a_{n-1}-a_n)/(2^\alpha a_n^{1+\alpha})$,
we see that $N\ge e^{\alpha\sqrt n}/(2^{1+\alpha}n)$, while $\gamma-\alpha$ takes values in the range
$[0,\frac\alpha 4]$ with positive probability, $p$ say, so that 
$(\mg E X)^2>p^2e^{-\frac \alpha 2\sqrt n}$.
Thus we see the probability of spending less than $(1-\eta)N$ steps in $I_n$ is (much) smaller than
$2^{-n-1}$ for large $n$.

An analogous argument based on studying the probability that
$2^\alpha a_n^{1+\alpha}(X_1+\ldots+X_{(1+\eta)N})<a_{n-1}-a_n$ shows that the probability
of spending more than $(1+\eta)N$ steps in $I_n$ is smaller than $2^{-n-1}$ for large $n$. 
\end{proof}

\begin{proof}[Proof of Lemma \ref{lem:rettimeest}]
Let $\delta>0$ be fixed. 
Given $n>0$, let $x\in [e^{-\sqrt n},e^{-\sqrt{n-1}})$, $m=\lceil \alpha^2/(4\beta^2)n\rceil$
and $p=\lfloor n-\delta\sqrt n\rfloor$. Notice that $a_p\le (1+\delta)x$ for large $n$. 
Let $E$ be the event that starting from $a_p$, the number of steps spent in
$I_j$ is at most $e^{\delta\alpha/9}N_j$ for each $j=m+1,\ldots,p$.
Applying Lemma \ref{lem:Intime} with $\eta$ taken to be $e^{\delta\alpha/9}-1$,
provided $n$ (and hence $m$) is sufficiently large, $E$ has probability
at least $1-2^{-m+1}$. After leaving $I_{m+1}$, the position exceeds
$e^{-\sqrt m}$ and the number of steps before hitting $Y$ is bounded above by $e^{\beta\sqrt m}$,
so that the contribution to $\phi_Y$ from this part of the orbit is bounded above by 
$e^{\beta\sqrt m}\|\phi\|$. Similarly, for $\bo\omega\in E$, 
the contribution from $I_p\cup\ldots\cup I_{m+1}$ 
is bounded above by $e^{\delta\alpha/9}
\sum_{j=m+1}^{p}N_j\big(\phi(a_j)+\|\phi\|_\text{Lip}(a_{j-1}-a_j)\big)$.
We claim that for all sufficiently large $n$ (with $m$ and $p$ related to $n$ as above),
\begin{equation}\label{eq:toshow}
e^{\delta\alpha/9}
\sum_{j=m+1}^{p}N_j\Big(\phi(a_j)+\|\phi\|_\text{Lip}(a_{j-1}-a_j)\Big)+e^{\beta\sqrt m}\|\phi\|
<\int_x^{\frac 12}\frac{\phi(t)}{\int t(2t)^\gamma\,d\nu(\gamma)}\,dt.
\end{equation}
One may check that $\int (2t)^\gamma\,d\nu(\gamma)=(2t)^{\alpha+\epsilon(t)}$, where 
$\epsilon(t)\to 0$ as $t\to 0$. In particular, the right side of \eqref{eq:toshow} exceeds 
$Ce^{\alpha\sqrt n}$ for all large $n$ (and $x\in I_n$). By the choice of $m$, $e^{\beta\sqrt m}\|\phi\|
=o(e^{\alpha\sqrt n})$.
Similarly, for all sufficiently large $n$, $N_j\le e^{(\alpha+\frac 12)\sqrt j}$ for 
all $j\ge m$. Also $a_{j-1}-a_j=o(e^{-\sqrt j})$,
so that $\sum_{j=m+1}^p N_j(a_{j-1}-a_j)=o(ne^{(\alpha-\frac12)\sqrt n})$.
To show \eqref{eq:toshow} it therefore suffices to show
\begin{equation}\label{eq:toshow2}
\sum_{j=m+1}^p N_j\phi(a_j)\le e^{-\delta\alpha/9}
\int_x^{\frac 12} \frac{\phi(t)}{\int t(2t)^{\gamma}\,d\nu(t)}
\,dt.
\end{equation}
To see this, set $h(t)=\phi(t)/\int t(2t)^\gamma\,d\nu(t)$ and observe
$$
\sum_{j=m+1}^p N_j\phi(a_j)=
\sum_{j=m+1}^p (a_{j-1}-a_j)h(a_j)
$$
This Riemann sum is 
$$
(1+o(1))\int_{a_p}^{a_m}h(t)\,dt,
$$

Notice that provided $n$ is sufficiently large, $a_p\ge e^{\delta/2}x$. 
We also see  that for any $b>1$, for
all sufficiently small $t$, $h(bt)\le \frac{b^{\alpha/3}}{b^{\alpha+1}}h(t)$,
so that for large $n$ (and corresponding $m$ and $p$),
\begin{align*}
\int_{a_p}^{a_m} h(t)\,dt&= e^{\delta/2}\int_{e^{-\delta/2}a_p}^{e^{-\delta/2}a_m}h(e^{\delta/2}t)\, dt
\\
&\le e^{-\delta\alpha/3}\int_{e^{-\delta/2}a_p}^{e^{-\delta/2}a_m}h(t)\,dt\\
&\le e^{-\delta\alpha/3}\int_x^{e^{-\delta/2}a_m}h(t)\,dt.
\end{align*}
Since 
$$
\int_{e^{-\delta/2}a_m}^{\frac 12}h(t)\,dt=o\left(\int_x^{\frac 12} h(t)\,dt\right),
$$
the claimed inequalities \eqref{eq:toshow2} and hence \eqref{eq:toshow} follow. 
We have therefore shown that for $\bo\omega\in E$, $\phi_Y(\bo\omega,a_p)<q(x)$.
Now if $z\ge (1+\delta)x$, since $z\ge a_p$, the same bound applies to $\phi_Y(\bo\omega,z)$.
Hence we have established the second statement of the lemma. The first statement is proved in a very similar way, by using Lemma \ref{lem:Intime}
to give lower bounds on the time spent in $I_q,\ldots,I_m$ where $q=\lceil n+\delta\sqrt n\,\rceil$.
\end{proof}

\begin{proof}[Proof of Proposition \ref{prop:phiYinbasin}]
Since for $x\in [\frac 12,1]$, $f_\gamma(x)=2x-1$ for all $\gamma$, 
Lemma \ref{lem:rettimeest} shows that $\PP_Y(\{\bo\omega,x)\colon 
\phi_Y(\bo\omega,x)>t\})=q^{-1}(t)(1+o(1))$. It therefore suffices to show that $q^{-1}(t)$
is a regularly varying function with order $-1/\alpha$. However, Theorem 1.5.12 of
\cite{Bingham} shows that if $g(x)$ is regularly varying of order $\alpha$ as $x\to \infty$,
then $g^{-1}(x)$ is regularly varying of order $1/\alpha$. Applying this to $g(t)=q(1/t)$, it suffices
to show that $\lim_{x\to\infty}g(bx)/g(x)=b^{\alpha}$ for each $b>0$. A calculation shows
\[
g(x)=\int_2^x \frac{1}{u\mg \int(\frac 2u)^\gamma\,d\nu(\gamma)}\,du.
\]
Now set $h(s)=\int e^{-s\gamma}\,d\nu(\gamma)$. If $s=(1-\lambda)r+\lambda p$,
noticing that $\frac{1}{1-\lambda}$ and $\frac 1\lambda$ are conjugate H\"older exponents, 
we have
\begin{align*}
h(s)&=\int e^{-(1-\lambda) r\gamma}e^{-\lambda p\gamma}\,d\nu(\gamma)\\
&\le \left(\int e^{-r\gamma}\,d\nu(\gamma)\right)^{1-\lambda}
\left(\int e^{-p\gamma}\,d\nu(\gamma)\right)^{\lambda}\\
&=h(r)^{1-\lambda}h(p)^{\lambda},
\end{align*}
so that $h$ is log-convex. Since for all $\epsilon>0$, 
$\nu([\alpha,\alpha+\epsilon])e^{-(\alpha+\epsilon) s}\le h(s)\le e^{-\alpha s}$ for large $s$,
it follows that $(\log h)'(s)\to-\alpha$ as $s\to\infty$
and $h(s+l)/h(s)\to e^{-\alpha l}$ as $s\to\infty$. 
It follows that $u\mapsto \int (\frac 2u)^\gamma\,d\nu(\gamma)=\int e^{-\log(u/2)\gamma}d\nu(\gamma)$ is regularly varying with 
index $-\alpha$ as $u\to\infty$, in fact 
\[
\frac{\int e^{-\log(\lambda u/2)\gamma}d\nu(\gamma)}{\int e^{-\log( u/2)\gamma}d\nu(\gamma)}=\frac{h(\log(u/2)+\log\lambda)}{h(\log(u/2))}\rightarrow e^{-\alpha\log\lambda}=\lambda^{-\alpha}.
\] Now it follows from Theorem 1.5.11 (i)  of \cite{Bingham} that since $[h(\log(u/2))] ^{-1}$ is regularly varying with index $\alpha$, for $\sigma=-1\ge -(\alpha+1)$
\[
\frac{x^{\sigma+1}[h(\log(x/2))]^{-1}}{\int_2^xu^{-\sigma}[h(\log(u/2))]^{-1}du}=\frac{[h(\log(x/2))]^{-1}}{g(x)}\rightarrow \sigma+\alpha+1=\alpha.
\] 
and therefore $g(x)$ is regularly varying with index $\alpha$ as required.
\end{proof}

\begin{proof}[Proof of Theorem \ref{Thm:LimitUnfold}]
We can restrict to the case $\int \phi\, d\pi(x)=0$, where $\pi$
is the stationary measure. By Proposition \ref{prop:phiYinbasin},
the cumulative distribution function, $G_\phi$ of $\phi_Y$ satisfies
$1-G_\phi(t)=L_\phi(t)t^{-\frac 1\alpha}$, where $L_\phi$ is slowly varying. 
We showed in the proof of Lemma \ref{lem:rettimeest} that 
$1-G_\phi(t)=(1+o(1))q_\phi^{-1}(t)$ where
$q_\phi(x)=\int_x^{\frac 12}\phi(z)/[\int z(2z)^\gamma\,d\nu(\gamma)]\,dz$.
The same proposition applies if $\phi$ is replaced by $\mathbf 1$ so that 
the cumulative distribution function, $G_\tau$ of $\tau_Y$ satisfies
$1-G_\tau(t)=L_\tau(t)t^{-\frac 1\alpha}$, where $L_\tau$ is again slowly varying. 
As before, $1-G_\tau$ behaves asymptotically as the inverse function of
$q_\tau(x)=\int_x^{\frac 12}1/[\int z(2z)^\gamma\,d\nu(\gamma)]\,dx$.
Since $q_\phi(x)=\phi(0)q_\tau(x)(1+o(1))$ and both are regularly varying of order $-\alpha$, we
see that $q_\phi(\phi(0)^{\frac 1\alpha}x)\sim q_\tau(x)$ and 
$1-G_\tau(t)\sim (1-G_\phi(t))/\phi(0)^{\frac 1\alpha}$.

Let $B_n$ be the solution of $nL_\phi(B_n)=B_n^{\frac1\alpha}$ as in the statement of 
Proposition \ref{prop:inducedstablelaw}. By that proposition, we obtain the existence of $A_n$ and $A_n'$
such that $(S_Y^n\phi_Y-A_n)/B_n$ and $(S_Y^n\tau_Y-A_n')/B_n$ converge to stable laws.  By Kac's Lemma, we may assume $A_n' = n/\pi(Y)$. 

Since $(S_Y^n\tau_Y-n/\pi(Y))/B_n$ converges in
distribution, then it is also tight.  Since $\phi_Y$ is integrable with
$\int_Y \phi_Yd\pi_Y= \int \phi d\hat \pi =0$, where $\hat \pi$ is the unfolding of $\pi_Y$ to $[0,1]$ and $\hat \pi$ equals, up to a normalizing factor, the invariant measure $\pi$. By Birkhoff's ergodic theorem $S_Y^{B_n}\phi_Y/{B_n}$
tends to zero almost everywhere. This implies that $\max_{0\leq k\leq
  B_n}|S_Y^k\phi_Y|/B_n$ tends to zero almost surely (and thus in
probability). To see this, consider $\mc A$ the set where the Birkhoff averages
have limit zero. For $x\in\mc A$, call $k_n(x)$ the sequence of
indices realizing the maxima. The sequence $(k_n(x))$ is non-decreasing.
If $k_n(x)$ is eventually constant, then
$|S_Y^{k_n(x)}\phi_Y(x)|/B_n\rightarrow 0$. If not, then
\[
\frac{|S_Y^{k_n(x)}\phi_Y(x)|}{B_n}=\frac{k_n(x)}{B_n}
\frac{|S_Y^{k_n(x)}\phi_Y(x)|}{k_n(x)}\rightarrow
0
\]
By Theorem \ref{Thm:GouezUnfold} the conclusion of the theorem follows when $\phi(0) >0$.
\end{proof}

Our final step it to consider the case $\phi(0)<0$. One simply observes that the statement of the Theorem \ref{Thm:GouezUnfold} is invariant under $\phi \rightarrow -\phi$, $A'_n \rightarrow -A'_n$ and $\mc Z  \rightarrow -\mc Z$, another stable law, and apply the previous argument.  

\section{Diffusion Driven Decay of Correlations}\label{sec:diffus}

In this section we consider a family of distributions $\nu$ on the parameter
space, with unbounded support. More precisely, for $\alpha,\epsilon>0$, $\nu_{\alpha,\epsilon}$ is
supported on $[\alpha,\infty)$, and given by 
\[
\nu_{\alpha,\epsilon}[0,t]=F(t):=
\begin{cases}1-\left(\frac t\alpha\right)^{-\epsilon}&\text{if $t\ge\alpha$;}\\
0&\text{if $t<\alpha$.}
\end{cases}
\]
Notice that since the measure has unbounded support, the considerations made in sections \ref{Sec:EstReturn} and \ref{Sec:Renewal} on the uniform bound of the distortion, do not apply. Furthermore, the fat polynomial tails of the distribution of $\nu$, imply that arbitrarily large parameters are sampled somewhat frequently.

\subsection{Markov chains with subgeometric rates of convergence}
The arguments in sections \ref{Sec:EstReturn} and \ref{Sec:Renewal} do not apply to this setup. Instead, we exploit the absolute continuity of $\nu$ to translate the deterministic process \eqref{Eq:ContSkew} into a
Markov process on the state space $[0,1]$ and apply  a
theorem on Markov chains (see \cite{tuominen1994subgeometric}) to
prove the existence of a stationary measure and to estimate the
asymptotic rate of correlation decay. To this end, consider the
stationary Markov process $(X_t)_{t\in\N_0}$ on the probability
space $(\Omega\times [0,1],\mc B(\Omega\times [0,1]),\mb P)$ taking
values in $[0,1]$, where $X_t(\bo\omega,x_0)=\Pi \circ
F^t(\bo\omega,x_0)$. Calling $\Gamma_x:[\alpha,\beta]\rightarrow
[0,1]$ the map $\Gamma_x(\omega)=f_{\omega}(x)$, then the transition
probability $P(x,A)=\mb P\left(X_t\in A|\;X_{t-1}=x\right)$, is given
by $P(x,\cdot)=(\Gamma_x)_*\nu$, i.e. the push-forward under
$\Gamma_x$ of the probability $\nu$ on the parameter space, or more
concretely $P(x,B)=\nu\{\gamma\colon f_\gamma(x)\in B\}$. 
The Markov chain defines an evolution of measures, which we denote by $Q$:
$Q\mu(B)=\int_{[0,1]}P(x,B)\,d\mu(x)$, that is if $X_n$ has distribution $\mu$, then
$X_{n+1}$ has distribution $Q\mu$. In case $\mu$ is an absolutely continuous measure,
with density $\rho$, $Q\mu$ is again absolutely continuous with density
\begin{equation}\label{eq:dens-evolv}
P\rho(x)=\int \frac{\rho({f_\gamma|_{[0,\frac 12]}}^{-1}(x))}
{f_\gamma'({f_\gamma|_{[0,\frac 12]}}^{-1}(x))}\,d\nu(\gamma)+\frac{\rho(\frac {x+1}2)}2.
\end{equation}

For $A\subset [0,1]$, we define
$\tau_A(\bo \omega,x)=\inf\{n\in\N|\; X_n(\bo\omega,x)\in A \}$,  and let us call $\mb P_x$, the probability measure $\mb P$ conditioned on $\{X_0=x\}$.

\begin{definition}
A Markov chain is $\psi-$irreducible if there is a measure $\psi$ such
that for every $A\in\mc B(S)$ with $\psi(A)>0$ and every $x\in S$
\[
\mb P_x\left(\tau_A<\infty\right)>0.
\]
\end{definition}

\begin{lemma}
The Markov chain $\{X_t\}_{t\in\N_0}$ is $\psi-$irreducible and aperiodic.
\end{lemma}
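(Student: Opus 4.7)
The plan is to take $\psi$ to be Lebesgue measure $m$ on $[0,1]$ (or its restriction to an invariant subset omitting the absorbing fixed points $\{0, 1\}$) and to prove both properties by exploiting the absolute continuity of $\nu_{\alpha,\epsilon}$ together with the deterministic expanding branch $g(x) = 2x - 1$ on $[1/2, 1]$. Two structural facts will be used repeatedly: first, for each $x \in (0, 1/2)$ the map $\omega \mapsto f_\omega(x) = x(1 + (2x)^\omega)$ is a $C^1$-diffeomorphism of $[\alpha, \infty)$ onto $(x, x(1 + (2x)^\alpha)]$ (strictly decreasing, since $\log(2x) < 0$), so the one-step kernel $P(x, \cdot)$ contains an absolutely continuous component with strictly positive density on this interval; second, since $\alpha < 1$ the set $[\alpha, \alpha']$ carries positive $\nu_{\alpha,\epsilon}$-mass for any $\alpha' \in (\alpha, 1)$, and iteration of any deterministic $f_\omega$ with $\omega \in [\alpha, \alpha']$ drives every $x > 0$ into $[1/2, 1]$ in a uniformly bounded number of steps.

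For $\psi$-irreducibility, I would first use the second fact to show that from any $x \in (0, 1)$ the chain reaches $[1/2, 1]$ in finite time with positive probability. Then, combining the smooth pushforward on $[0, 1/2)$ with the expansion of $g$ on $[1/2, 1]$ (which doubles interval lengths and maps $[1/2, 1]$ onto $[0,1]$), I would argue inductively that for sufficiently large $n$ the kernel $P^n(x, \cdot)$ dominates an absolutely continuous measure whose density is bounded below by a strictly positive constant on a fixed interval of positive Lebesgue measure; further iteration spreads this across the state space, yielding $\mb P_x(\tau_A < \infty) > 0$ for any $A$ with $m(A) > 0$.

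For aperiodicity I argue by contradiction: suppose the chain has period $d \ge 2$, so there is a measurable partition $[0,1] = C_0 \sqcup \ldots \sqcup C_{d-1}$ (modulo a $\psi$-null set) with $P(x, C_{(i+1) \bmod d}) = 1$ for $x \in C_i$. Choose $\delta > 0$ small enough that $J(x) := (x, x(1 + (2x)^\alpha)) \subset (0, 1/2)$ whenever $x \in (0, \delta)$, and pick a Lebesgue density point $x_0 \in (0, \delta) \cap C_i$ for the index $i$ for which this intersection has positive measure. The strict positivity of the pushforward density on $J(x_0)$ forces $J(x_0) \subset C_{i+1}$ modulo null; selecting a density point $x_1 \in J(x_0) \cap C_{i+1}$, the same reasoning gives $J(x_1) \subset C_{i+2}$. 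Since $h(x) := x(1 + (2x)^\alpha)$ is strictly increasing and $x_0 < x_1 < h(x_0)$, the intersection $J(x_0) \cap J(x_1) = (x_1, h(x_0))$ is a non-empty open interval lying in both $C_{i+1}$ and $C_{i+2}$, a contradiction.

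The main obstacle will be the inductive spreading step in $\psi$-irreducibility: one must carefully track the $n$-step densities under the mixed random-deterministic dynamics to verify that iterating the narrow pushforward on $[0, 1/2)$ together with the doubling of $g$ on $[1/2, 1]$ eventually produces a density strictly positive on a uniform set of positive Lebesgue measure, rather than collapsing on a lower-dimensional structure. Handling the absorbing fixed points $\{0, 1\}$ is a minor issue resolved by restricting to an invariant subset of full Lebesgue measure.
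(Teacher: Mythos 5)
Your proposal is broadly correct and, on the irreducibility half, follows the same outline as the paper: the only real obstacle is the set of points that deterministically reach the absorbing state $0$, and once that measure-zero set is discarded every random orbit eventually enters $(0,1/2)$, where the absolutely continuous one-step kernel spreads mass; further iteration (through the expanding right branch) then reaches any set of positive Lebesgue measure. Both you and the paper leave the ``spreading'' step at the same level of informality --- you flag it explicitly as the main gap, while the paper simply asserts it --- so you are not below the paper's standard of rigor here. One small correction on the bookkeeping: the set to remove is not just $\{0,1\}$ but $\{0\}\cup\{x\in[1/2,1]: h^n(x)=\tfrac 12 \text{ for some } n\}$ (plus $\{1\}$), i.e.\ all points whose deterministic right-branch orbit ever lands on $1/2$; from $(0,1/2)$ the absolutely continuous kernel avoids this countable set almost surely, so the resulting restriction is indeed a.s.\ invariant.

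Your aperiodicity argument is a genuinely different route. The paper dismisses aperiodicity in a single sentence (``a consequence of the two branches being onto''), whereas you give a self-contained argument via the cyclic decomposition: pick $x_0\in D_i\cap(0,\delta)$, use that $P(x_0,\cdot)$ has full-support density on $J(x_0)=(x_0,h(x_0))$ where $h(x)=x(1+(2x)^\alpha)$ to force $J(x_0)\subset D_{i+1}$ mod null, then pick $x_1\in J(x_0)\cap D_{i+1}$ and derive $J(x_1)\subset D_{i+2}$ mod null, and observe $J(x_0)\cap J(x_1)=(x_1,h(x_0))$ is a non-degenerate interval lying in two distinct cyclic classes. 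This is clean and complete modulo one adjustment: your choice of $\delta$ guarantees only $h(\delta)\le\tfrac 12$, but since $x_1$ can be as large as $h(x_0)$ you need $h(x_1)<\tfrac 12$, i.e.\ you should take $\delta$ small enough that $h(h(\delta))<\tfrac 12$. With that fixed the contradiction goes through for any purported period $d\ge 2$. The Lebesgue-density-point language is not strictly necessary (positive measure of $D_i\cap(0,\delta)$ and of $J(x_0)\cap D_{i+1}$ suffices to pick the points), but it does no harm. Overall your aperiodicity argument is more informative than the paper's and would be a reasonable replacement.
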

\begin{proof}
The only problem with these two properties is that
$P(0,\cdot)=\delta_0$. However, this can be fixed by removing the set
$\{0\}\cup\{h^{-n}(\tfrac 12):n\ge 0\}$ from the state space,
where $h:[\frac12,1]\rightarrow [0,1]$ is $h(x)=2x-1$ mod $1$, and
redefining the $\sigma-$algebras and transition probabilities accordingly.  In this modified state space, every orbit eventually enters $(0,1/2)$ and is spread by the diffusion. This implies that any set of positive Lebesgue measure is  eventually visited by a random orbit originating from any given $x$. Aperiodicity is a consequence of the two branches of all the maps $f_{\gamma}$ being onto.
\end{proof}

\begin{definition}\label{Def:Petiteset}
Given a probability distribution $a:\N\rightarrow\R^+_0$, and a
nontrivial measure $\nu_a$, a set $C\in\mc B(S)$ is $\nu_a$-petite if
\[
K_a(x,\cdot):=\sum_{n=1}^{\infty}a(n)P^n(x, \cdot)\ge
\nu_a(\cdot),\quad\quad\forall x\in C.
\]
\end{definition}

\begin{definition} 
A function $r:\N_0\rightarrow \R$ is a subgeometric rate function if
there is a non-decreasing $r_0:\N_0\rightarrow \R$ with $r_0(1)\ge 2$
and $\log (r_0(n))/n\rightarrow 0$ as $n\rightarrow\infty$ such that
\[
\liminf_{n\rightarrow \infty}\frac{r(n)}{r_0(n)}>0,\quad\quad
\limsup_{n\rightarrow\infty}\frac{r(n)}{r_0(n)}<\infty.
\]
\end{definition}

The main result we are going to use in this section is the following
theorem on subgeometric rates of convergence of ergodic Markov chains.
\begin{theorem}[\cite{tuominen1994subgeometric}]\label{Thm:TT}
Suppose that $\{X_t\}_{t\in\N_0}$ is a discrete time Markov process on
a general state space $S$ endowed with a countably generated
$\sigma$-field $\mc B(S)$ which is $\psi$-irreducible and
aperiodic. Suppose that there is a petite set $C\in \mc B(S)$  and a subgeometric rate function
$r:\N\rightarrow\R^+$ (see
above for the definitions) such that
\begin{equation}\label{Eq:CondHitPet}
\sup_{x\in C}\mb E_x\left[\sum_{n=0}^{\tau_C-1}r(n)\right]<\infty.
\end{equation}
where $\tau_C:=\inf\{t\in\N:\; X_t\in C\}$, and $\mb E_x$ is the
expectation with respect to the probability law of the Markov process
conditioned on $X_0=x$.  Then the Markov process admits a stationary
measure $\pi$, and for almost every point $x\in S$
\[
\lim_{n\rightarrow\infty}r(n)\|P^n(x,\cdot)-\pi\|_{TV}=0,
\]
where $\| \cdot \|_{TV}$ denotes the total variation norm difference between the two measures. 
Furthermore, if \eqref{Eq:CondHitPet} holds and $\lambda$ is a
probability measure on $S$ that satisfies
\begin{equation}\label{Eq:CondHitMeas}
\mb E_\lambda\left[\sum_{n=0}^{\tau_C-1}r(n)\right]<\infty,
\end{equation}
then 
\[
r(n)\int\|P^n(x,\cdot)-\pi\|_{TV}d\lambda(x)\rightarrow 0.
\]
\end{theorem}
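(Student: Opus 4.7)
The plan is to follow the Nummelin splitting paradigm together with a maximal coupling at an accessible atom, which is the canonical route to subgeometric ergodicity in the tradition of Meyn--Tweedie and of \cite{tuominen1994subgeometric}.

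First, I would use the petiteness of $C$ to perform a Nummelin splitting. The minorization $K_a(x,\cdot)\ge \nu_a(\cdot)$ for $x\in C$ enlarges the state space to $S\times\{0,1\}$ and produces an augmented chain $\check X$ whose first-coordinate marginal is the original chain but which admits an accessible atom $\check\alpha:=C\times\{1\}$. Aperiodicity together with $\psi$-irreducibility ensures (after sampling along the support of $a$) that $\check\alpha$ is reached in finite expected time from every initial state, so all rate estimates for $X$ descend from those for $\check X$ by projection. The stationary measure $\pi$ is obtained by Kac's formula applied to $\check\alpha$, since hypothesis~\eqref{Eq:CondHitPet} in particular implies finite mean return time to $\check\alpha$.

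Second, I would transfer the subgeometric $r$-moment from $\tau_C$ to $\tau_{\check\alpha}$, and thence to the coupling time. An Abel summation rewrites~\eqref{Eq:CondHitPet} as a tail condition of the form $\sup_{x\in C}\mb E_x[r(\tau_C)]<\infty$, and the extra geometric delay incurred in seeing the second coordinate equal $1$ is absorbed using the moderate-growth property $\log r_0(n)/n\to 0$ built into the definition of a subgeometric rate function. Running an independent copy $\check X'$ started from $\pi$ and letting $T$ be the first time at which both copies sit in $\check\alpha$ simultaneously, the maximal coupling inequality gives
\[
\|P^n(x,\cdot)-\pi\|_{TV}\le \mb P_x(T>n),
\]
and the regenerative structure at $\check\alpha$ dominates $T$ (up to a geometric factor) by a sum of i.i.d.\ excursion lengths between visits to $\check\alpha$, yielding $\mb E_x[r(T)\car_{T>n}]\to 0$ and hence $r(n)\|P^n(x,\cdot)-\pi\|_{TV}\to 0$ by dominated convergence. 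The integrated statement under~\eqref{Eq:CondHitMeas} follows by integrating the coupling bound against $\lambda$ and applying the same Abel summation to the initial distribution in place of a point mass.

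The main obstacle is the transfer step: the rate $r$ is not exactly submultiplicative, so controlling the regeneration sum by $r$ requires sharpening the moderate-growth hypothesis. The standard device is to replace $r$ by a slightly smaller rate $\tilde r\le r$ enjoying a clean subadditivity bound $\tilde r(m+n)\le \tilde r(m)\tilde r(n)$, prove the coupling tail estimate for $\tilde r$, and then recover $r$ via asymptotic comparison. Making this comparison uniform in the starting point $x$, so that the ``almost every $x$'' convergence in the statement has the correct exceptional set, is where the precise definition of subgeometric rate function must be used carefully.
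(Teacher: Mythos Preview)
The paper does not prove Theorem~\ref{Thm:TT}: it is quoted from \cite{tuominen1994subgeometric} and used as a black box, so there is no in-paper argument to compare your proposal against. Your sketch is a reasonable outline of the standard route (Nummelin splitting to manufacture an atom, transfer of the $r$-moment from $\tau_C$ to the atom, then a regenerative coupling bound), which is indeed the circle of ideas behind the cited result.
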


For the remainder of this section, let $\alpha$ and $\epsilon$ be fixed.
Let $b<\frac 12$ be chosen so that $f_\alpha(b)>\frac{3+b}4$.

We compute the one
step probability transition density starting from a point $x<\frac
12$, which will be denoted $p_x(y)$ (with support
$[x,f_\alpha(x)]$).  For $x<y\le f_\alpha(x)$, define $\gamma_x(y)$ to be the
value of $\gamma$ such that $f_\gamma(x)=y$.  That is $\gamma$
satisfies $x(1+(2x)^\gamma)=y$, or $(2x)^\gamma=\frac{y-x}x$,
giving an explicit form:
$\gamma_x(y)=\log\frac{x}{y-x}/\log\frac{1}{2x}$, where we took
reciprocals of both numerator and denominator to ensure positivity
of the logarithms. Note that this is a decreasing function of
$y$. We can then obtain $p_x(y)$ by
\begin{align*}
p_x(y)&=\lim_{h\to
  0}\frac{F(\gamma_x(y))-F(\gamma_x(y+h))}h\\ &=-F'(\gamma_x(y))\cdot
\gamma_x'(y)\\ &=\frac{\epsilon\alpha^\epsilon}{(\gamma_x(y))^{1+\epsilon}}\cdot
\frac{1}{(y-x)\log(\frac{1}{2x})}\\ &=\frac{\epsilon\alpha^\epsilon\big(\log(\frac
  1{2x})\big)^\epsilon}
     {\big(\log(\frac{x}{y-x})\big)^{1+\epsilon}(y-x)}.
\end{align*}
Recall that $p_x(\cdot)$ is supported on $[x,f_\alpha(x)]$.  By the
choice of $b$ and since $b<x<\frac 12$, this contains the sub-interval
$[\frac{b+1}2,\frac{b+3}4]$.   We will denote by $p^n_x(y)$ the $n-$step probability transition density.

In this case where $\nu$ has a power law distribution, 
we can write the transition operator \eqref{eq:dens-evolv} of the Markov process in the form:
\[
P\rho(y)=\int_{0}^{\frac{1}{2}}p_x(y)\rho(x)dx+
\tfrac{1}{2}\rho\big(\tfrac{y+1}{2}\big),
\]
where $p_x(y)$ is taken to be 0 if $y\le x$ or $y>f_\alpha(x)$.

\begin{remark}
For any fixed $b<\frac 12$, the set 
\[
C:=[f_\alpha^{-1}(b),b]
\]
is a petite set. In the case where $f_\alpha(b)>\frac{b+3}4$ that we are considering, 
it is not hard to see that $p^2_x(y)$ is uniformly bounded below for $x\in C$ and 
$y\in [\frac 12,\frac{b+1}2]$. Therefore, in Definition \ref{Def:Petiteset} one can pick $a(n)=1$ for $n=2$ and zero otherwise, with  $\nu_a$  a multiple of the Lebesgue measure on $[\frac 12,\frac{b+1}2]$.
\end{remark}

\subsection{Outline of the proof of Theorem \ref{thm:Linftydecorr}} The rest of the section is mostly dedicated to estimating the return times to $C$ and to finding  rates $r$ for which \eqref{Eq:CondHitPet} holds. 

   Notice that  $\frac{b+1}2$ is the preimage of $b$ under the second branch. Two facts play an important role in the arguments that follow: by the particular choice of $C$, any random orbit starting from $(0,b)$ will pass through $C$ before landing to the right of it; points in $H:=[\frac 12,\frac{b+1}2)$ are mapped to $(0,b)$, i.e. to the left of the petite set $C$. Any point in $(b,1)$ must visit $H$ before hitting $C$.

\begin{figure}[h]
\begin{center}
\includegraphics[width=2.5in]{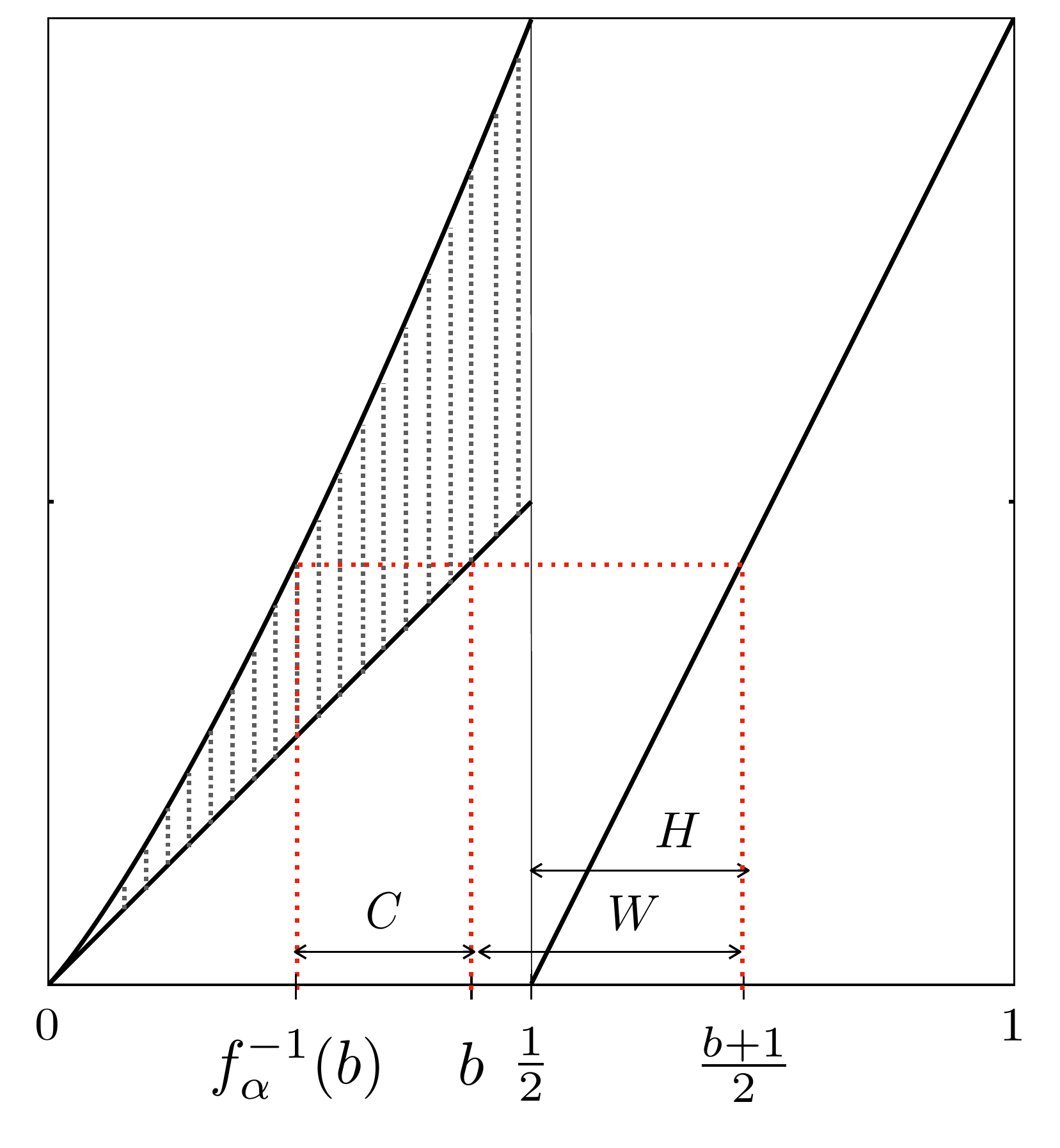}
\end{center}
\caption{Schematic indicating the petite set $C$ and the subintervals $W$ and $H$.}
\end{figure}

\emph{Step 1.} First we show that random orbits originating from $C$ tend to end up to the left of $C$ quite fast. More precisely, given any
initial condition to the right of $b$, the probability that a random orbit
hasn't entered $H$ by time $t$ is exponentially small in $t$. This is the content of Proposition \ref{Prop:StretExp} and Lemma \ref{Lem:LeaveC} below which need several other lemmas to be proven. 

\emph{Step 2.} Then we study the hitting times to $C$ for orbits originating on $(0,b)$. For $x$ close to 0, 
by Lemma 5.7 the time to hit $C$ is of the rough order $x^{-\alpha}$. Since for $z\in H$, 
$f_\gamma(z)=2z-1<b$ for all $\gamma$, the distribution of times to enter $C$ after hitting 
$H$ is determined by the distribution of positions at which $H$ is hit. Lemma 6.15 assembles the 
prior facts to show that $X_{\tau_H}$, conditioned on starting at 
$x\in C$ is absolutely continuous with density bounded above uniformly in $x$.
This ensures that the distribution of the time between hitting $H$ and entering $C$
has a polynomial tail which, compared to the exponential tails of the times estimated in Step 1, dominate the statistics of the returns to $C$. In Proposition \ref{prop:Eretpower} we put together all the estimates and obtain subgeometric rates  $r$ for which \eqref{Eq:CondHitPet} holds. 

\emph{Step 3.} Finally, we  apply Theorem \ref{Thm:TT}  to prove Theorem \ref{thm:Linftydecorr}. 

\subsection{Step 1}

In order to show that the hitting times to  $H$ have exponential tails, we divide $(b,1)$ into the intervals $W:=[b,\frac{b+1}2)$ and $[\frac{b+1}2,1)$. Notice that $W$ is such that for each $y\in
[\frac{b+1}2,1)$, the iterates of $y$ under the $f_\gamma$'s remain in
 the right branch until they hit $W$. We are going to show that
 \begin{itemize}
 \item[i)] one can control the density of the random orbits originating from $C$ at the stopping time $\tau_W$, that is the first entry to $W$ (lemmas \ref{Lem:BoundsW}, \ref{Lem:LeaveC});
 \item[ii)] after each return to $W$, at least a fixed fraction of the random orbits will enter $H$ and then be mapped to the left of the petite set (Lemma \ref{Lem:ReturnstoW});
 \item[iii)] the times between consecutive entries to $W$ before hitting $H$ have at most exponential tails (Lemma \ref{Lem:EstReturnW2}).
 \end{itemize}

\begin{lemma}\label{Lem:Bounds}
Let $b<\frac 12$ satisfy $f_\alpha(b)>\frac{b+3}4$.
  There is $r>0$ such that for every $x\in
  (b,\frac{1}{2})$ \[r(x):=\int_{(\frac{b+1}{2},1)}p_x(y)dy>r\] and
  $r\rightarrow 1$ when $b\rightarrow \frac{1}{2}$.
\end{lemma}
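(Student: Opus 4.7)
The strategy is to translate the one-step hitting probability from an integral in the position variable $y$ to a computation in the parameter variable $\gamma$. Since $f_\gamma(x) = x(1+(2x)^\gamma)$ and $2x<1$ for $x\in(b,\tfrac12)$, one has $\partial_\gamma f_\gamma(x) = x(2x)^\gamma \log(2x) < 0$, so $\gamma \mapsto f_\gamma(x)$ is strictly decreasing. Consequently, $\{f_\gamma(x) > \tfrac{b+1}{2}\}$ is equivalent to $\{\gamma < \gamma^*(x)\}$, where $\gamma^*(x)$ is defined implicitly by $f_{\gamma^*(x)}(x) = \tfrac{b+1}{2}$, giving explicitly
\[
\gamma^*(x) = \frac{\log\bigl(\tfrac{b+1}{2x} - 1\bigr)}{\log(2x)}.
\]
Recalling $F(\gamma) = 1 - (\gamma/\alpha)^{-\epsilon}$ for $\gamma\ge\alpha$, this yields
\[
r(x) = 1 - \bigl(\gamma^*(x)/\alpha\bigr)^{-\epsilon} \quad \text{whenever } \gamma^*(x) \ge \alpha.
\]

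Next I would verify two properties of $\gamma^*$ on the interval $(b,\tfrac12)$: that it is bounded below by a constant strictly larger than $\alpha$, and that it is monotonically increasing in $x$. The first reduces to verifying $f_\alpha(x) > \tfrac{b+1}{2}$ for all $x\in(b,\tfrac12)$, which follows from monotonicity of $f_\alpha$ together with the hypothesis $f_\alpha(b) > \tfrac{b+3}{4} > \tfrac{b+1}{2}$ (the latter inequality being equivalent to $b<1$). The monotonicity of $\gamma^*$ is an elementary implicit-function computation: since $f_\gamma(x)$ is increasing in $x$ and decreasing in $\gamma$, as $x$ grows a larger value of $\gamma$ is needed to bring $f_\gamma(x)$ back down to the value $\tfrac{b+1}{2}$. (Alternatively one can differentiate the explicit formula directly and check both terms in the numerator of $d\gamma^*/dx$ carry the same sign.) Combining, the infimum of $\gamma^*$ over $(b,\tfrac12)$ is $\gamma^*(b)$, and
\[
\inf_{x\in(b,1/2)} r(x) \;=\; 1 - \bigl(\gamma^*(b)/\alpha\bigr)^{-\epsilon} \;=:\; r \;>\; 0.
\]

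For the final assertion that $r \to 1$ as $b\to \tfrac12^-$, I would substitute $x=b$ in the formula to get
\[
\gamma^*(b) = \frac{\log\bigl((1-b)/(2b)\bigr)}{\log(2b)}.
\]
As $b\to\tfrac12^-$ the numerator tends to $-\log 2$ while the denominator tends to $0^-$, so $\gamma^*(b)\to +\infty$ and hence $r = 1 - (\gamma^*(b)/\alpha)^{-\epsilon} \to 1$. None of the steps are especially delicate; the only mildly technical point is confirming monotonicity of $\gamma^*$ in $x$, which I expect to be the principal (and still easy) obstacle if one tries to avoid the explicit formula entirely.
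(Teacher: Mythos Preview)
Your proposal is correct and follows essentially the same approach as the paper: both recognize that $r(x)$ equals $F(\gamma_x(\tfrac{b+1}{2}))$ (your $\gamma^*(x)$ is exactly the paper's $\gamma_x(\tfrac{b+1}{2})$), and both use the hypothesis $f_\alpha(b)>\tfrac{b+3}{4}>\tfrac{b+1}{2}$ to ensure $\gamma^*(x)>\alpha$. The paper's proof is a single sentence relying on continuity, while you supply the extra (correct but not strictly necessary) step of verifying monotonicity of $\gamma^*$ to pin down the infimum at $x=b$; either route closes the argument.
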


\begin{proof}
Notice that for $x\in [b,\frac 12)$, $r(x)=F(\gamma_x(\frac{b+1}2))$, which is strictly positive,
and approaches 1 as $x\to \frac 12$. 
\end{proof}

 We show that there exists  $C_\#>0$ such that for all $x\in
(b,\frac{1}{2})$, conditioned on $X_0=x$ and $X_1>\frac{b+1}2$ (so
that $X_2>b$) then the distribution of $X_{\tau_W}$ has density
bounded between $C_\#^{-1}$ and $C_\#$, where $\tau_W$ is the time of the first
return to $W$.   To prove this, we first need an estimate on $p_x$:

\begin{lemma}\label{Lem:Est1}
There exists $K$ such that for all $x\in(b,\frac 12)$, and all $y,y'\in
[\frac{b+1}2,f_\alpha(x)]$ satisfying $2y-1\le y'< y$ (that is, for
any $\gamma$, $f_\gamma(y)\le y'<y$), one has $p_x(y')/p_x(y)$ is
bounded between $\frac{1}{K}$ and $K$.
\end{lemma}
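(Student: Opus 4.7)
The plan is to use the explicit formula for $p_x$ derived above and decompose the ratio as
\[
\frac{p_x(y')}{p_x(y)} = \biggl(\frac{\log(x/(y-x))}{\log(x/(y'-x))}\biggr)^{1+\epsilon}\cdot\frac{y-x}{y'-x},
\]
bounding each of the two factors above and below by constants depending only on $\alpha$, $\epsilon$, and $b$. The fractional factor is immediate: since $y, y' \ge (b+1)/2 > 1/2 > x$, both $y-x$ and $y'-x$ exceed $b/2$, and the constraint $y' \ge 2y-1$ combined with $y \le 1$ yields $y - y' \le 1 - y \le (1-b)/2$, giving $1 \le (y-x)/(y'-x) \le 1 + (1-b)/b$.

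For the logarithmic factor, the key observation is the identity $\log(x/(y-x)) = \gamma_x(y)\log(1/(2x))$, under which the possibly vanishing normalisation $\log(1/(2x))$ cancels and the factor reduces to $(\gamma_x(y)/\gamma_x(y'))^{1+\epsilon}$. Equivalently, I would parameterise $y = f_\gamma(x)$ and $y' = f_{\gamma'}(x)$ with $\alpha \le \gamma \le \gamma'$ (the second inequality coming from $y' \le y$) and check that a short computation yields the compact form
\[
\frac{p_x(y')}{p_x(y)} = \biggl(\frac\gamma{\gamma'}\biggr)^{\!1+\epsilon}(2x)^{\gamma-\gamma'},
\]
in which $(\gamma/\gamma')^{1+\epsilon} \le 1$ and $(2x)^{\gamma-\gamma'} \ge 1$ act in opposite directions. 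The upper bound $p_x(y')/p_x(y) \le 1/b$ now follows either from the fractional factor estimate or, equivalently, from $(2x)^{\gamma-\gamma'} = e^{(\gamma'-\gamma)\log(1/(2x))} \le e^{\gamma'\log(1/(2x))} = x/(y'-x) \le 2x/b$, using $y'-x \ge b/2$. Only the \emph{lower} bound requires further work, and it reduces to showing that $\gamma'/\gamma$ is bounded above uniformly in $(x, y, y')$.

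The hard part will be controlling $\gamma'/\gamma$ as $x \to 1/2$, where $f_\gamma(x) \to 1$ for every $\gamma$ and both $\log(1/(2x))$ and $\log(x/(y-x))$ tend to zero simultaneously; for $x$ in any compact subinterval of $(b, 1/2)$ the uniform bound follows at once from continuity. To handle $x$ close to $1/2$ I would linearise: writing $x = 1/2 - \delta$ and $y = 1 - c\delta + O(\delta^2)$, the constraints $y \in [(b+1)/2, f_\alpha(x)]$ together with the extreme case $y' = 2y - 1$ force $c \ge \alpha + 2$ and yield, after a short Taylor expansion, $\gamma_x(y) = c - 2 + o(1)$ and $\gamma_x(y') = 2c - 2 + o(1)$; the resulting ratio $\gamma'/\gamma = (2c-2)/(c-2)$ is maximised at $c = \alpha + 2$, giving the bound $(2\alpha + 2)/\alpha$. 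A parallel analysis handles the case where $y < (b+3)/4$ (so that the binding lower bound on $y'$ is $(b+1)/2$ rather than $2y-1$), and it turns out to give an even smaller ratio. Combining these asymptotic estimates with the compact-range bound yields a uniform upper bound on $\gamma'/\gamma$ depending only on $\alpha$ and $b$, from which the lemma follows with some $K = K(\alpha, \epsilon, b)$.
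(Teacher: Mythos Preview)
Your decomposition into the fractional factor $(y-x)/(y'-x)$ and the logarithmic factor is the same as the paper's, and your observation that the logarithmic factor is exactly $(\gamma/\gamma')^{1+\epsilon}$ is a nice reformulation. Your upper bound on $p_x(y')/p_x(y)$ is correct. The gap is in the lower bound, specifically in your control of $\gamma'/\gamma$ as $x\to\tfrac12$.

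Your Taylor expansion $\gamma_x(y)=c-2+o(1)$ with $y=1-c\delta$ is only valid when $c$ stays bounded as $\delta\to 0$; it relies on $\log(1-2(c-1)\delta)\approx -2(c-1)\delta$, which fails once $(c-1)\delta$ is of order one. But the constraint $y\ge\tfrac{b+1}2$ allows $c$ up to $(1-b)/(2\delta)$, so $c$ is \emph{not} bounded. For instance, at $y=\tfrac{b+1}2$ one has $\gamma_x(y)\sim \log(1/b)/(2\delta)$ while $c-2\sim (1-b)/(2\delta)$, off by the factor $\log(1/b)/(1-b)\ne 1$. Hence the maximisation of $(2c-2)/(c-2)$ over $c\ge\alpha+2$ does not yield a uniform bound, and your ``parallel analysis'' for $y<(b+3)/4$ cannot be a Taylor expansion either, since there $c$ is necessarily of order $1/\delta$.

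The paper closes this uniformly with essentially the same linearisation, but performed in a compact variable: writing $u=y/x\in[1+b,2)$, one has $\log\frac{x}{y-x}=\log\frac1{u-1}$, and the ratio $\log\frac1{u-1}/(2-u)$ is continuous, positive, and has a finite limit at $u=2$, hence is bounded above and below on $[1+b,2]$. This converts $\gamma'/\gamma$ into a bound on $(2x-y')/(2x-y)$, which is then handled by $y'\ge 2y-1$ (giving $(2x-y')/(2x-y)\le 2+(1-2x)/(2x-y)$) and $y\le f_\alpha(x)$ (giving $2x-y\ge 2x-f_\alpha(x)=x(1-(2x)^\alpha)\ge \alpha b(1-2x)$). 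This is the missing ingredient: your asymptotic computation is the germ of this comparison at $u=2$, but you need it uniformly on the whole interval $[1+b,2]$.
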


\begin{proof}
We have
$$
\frac{p_x(y)}{p_x(y')}=\frac{\big(\log(\frac x{y'-x})\big)^{1+\epsilon}(y'-x)}
{\big(\log(\frac x{y-x})\big)^{1+\epsilon}(y-x)}.
$$
Clearly the ratio $(y'-x)/(y-x)$ is uniformly bounded above and below for $x\le \frac 12$ and 
$y,y'\in[\frac{b+1}2,1]$ so it suffices to establish $\log(\frac x{y'-x})/\log(\frac x{y-x})$
is uniformly bounded above and below for $x$, $y$ and $y'$ as in the statement of the claim.
It is clear that the numerator exceeds the denominator, so it suffices to give an upper bound.

There exist positive numbers $a$ and $A$ such that $a(2-u)\le \log \frac1{u-1}\le A(2-u)$ for all 
$u\in [b+1,2]$. Applying this with $u$ taken to be $y'/x$ and $y/x$, we see that
$\log(\frac x{y'-x})/\log(\frac x{y-x})\le A(2-\frac {y'}x)/a(2-\frac yx)$, so that
it suffices to give a uniform upper bound for $(2x-y')/(2x-y)$.
Since $y'\ge 2y-1$, we see $(2x-y')/(2x-y)\le (1+2x-2y)/(2x-y)=2+(1-2x)/(2x-y)$.

Finally $2x-y\ge 2x-f_\alpha(x)=x(1-(2x)^\alpha)\ge b(1-(2x)^\alpha)
\ge \alpha b(1-2x)$, giving the required upper bound for $(2x-y')/(2x-y)$. 
\end{proof}

\begin{lemma}\label{Lem:BoundsW}
There exists $C_\#>1$ such that for all $x\in (b,\frac 12)$ the density
$\rho_{x,W}(\cdot)$ of $X_{\tau_W}$ conditioned on $X_0=x$, $X_1>\frac
{b+1}2$ satisfies $\frac 1C_\#\le \rho_{x,W}(y)\le C_\#$ for all $y\in W=[b,\frac{b+1}2)$.
\end{lemma}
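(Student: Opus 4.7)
The plan is to compute $\rho_{x,W}$ as a push-forward of the conditional law of $X_1$ under the deterministic right-branch dynamics, and then to apply Lemma~\ref{Lem:Est1} to control the resulting sum uniformly in $x$.

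Once $X_1 > (b+1)/2$, the orbit stays in the right branch and iterates deterministically under $g(z) = 2z-1$ until it enters $W$. I would partition $[\tfrac{b+1}{2},1)$ into the intervals
\[
I_k := \left[\tfrac{b+2^k-1}{2^k},\ \tfrac{b+2^{k+1}-1}{2^{k+1}}\right), \quad k \ge 1,
\]
so that on each $I_k$ the map $g^k$ is affine with slope $2^k$ and maps $I_k$ bijectively onto $W$ (with $\tau_W = k+1$ on $I_k$, since the conditioning excludes $\tau_W=1$). Writing $y^{(k)} := (g^k)^{-1}(y) = (y+2^k-1)/2^k$ for the unique preimage of $y \in W$ lying in $I_k$, a direct push-forward computation gives
\[
\rho_{x,W}(y) \;=\; \frac{1}{r(x)} \sum_{k\ge 1,\ y^{(k)} \le f_\alpha(x)} \frac{p_x(y^{(k)})}{2^k}.
\]
The hypothesis $f_\alpha(b) > (b+3)/4$ guarantees $I_1 \subset [\tfrac{b+1}{2}, f_\alpha(x)]$ for every $x \in (b,\tfrac12)$, so at least the $k=1$ term is always present.

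The key observation is that the two endpoints $z_1 < z_2$ of each $I_k$ satisfy precisely the hypothesis of Lemma~\ref{Lem:Est1} with $2z_2-1 = z_1$; consequently $p_x$ varies by a factor at most $K$ across each $I_k$. In particular, $p_x(y^{(k)})/p_x((y')^{(k)}) \in [K^{-1},K]$ for any $y,y' \in W$ and any admissible $k$. Comparing the series term by term then yields $\rho_{x,W}(y)/\rho_{x,W}(y') \le K$ whenever $y,y'$ share the same largest admissible index $K(x) := \max\{k : I_k \subset [\tfrac{b+1}{2}, f_\alpha(x)]\}$. In the delicate borderline case where $y^{(K(x)+1)} \le f_\alpha(x)$ but $(y')^{(K(x)+1)} > f_\alpha(x)$, two applications of Lemma~\ref{Lem:Est1} give $p_x(y^{(K(x)+1)}) \le K^2 p_x((y')^{(K(x))})$, so the extra term in $\rho_{x,W}(y)$ is absorbed into a constant multiple of the $k=K(x)$ term in $\rho_{x,W}(y')$, yielding a uniform ratio bound $\rho_{x,W}(y)/\rho_{x,W}(y') \le C'$ independent of $x$.

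Finally, the normalization $\int_W \rho_{x,W}(y)\,dy = 1$ converts this uniform ratio bound into the two-sided pointwise estimate $1/(C'|W|) \le \rho_{x,W}(y) \le C'/|W|$, so we may take $C_\# = C'/|W|$. The main technical obstacle is handling the boundary index $k = K(x)+1$, which is most delicate as $x \to \tfrac12$ (where $K(x) \to \infty$ and $p_x$ concentrates near $y=1$); Lemma~\ref{Lem:Est1} is exactly what is needed to keep the partial-interval contribution uniformly comparable to the bulk, and the rest of the argument is a routine push-forward computation.
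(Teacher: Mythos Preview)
Your proposal is correct and follows essentially the same route as the paper. Both arguments write $\rho_{x,W}(y)$ as the normalized sum $\sum_{k\ge 1} 2^{-k}p_x\bigl((y+2^k-1)/2^k\bigr)$, observe that only finitely many terms survive because $p_x$ is supported on $[x,f_\alpha(x)]$, and then use Lemma~\ref{Lem:Est1} term by term to bound the ratio $\rho_{x,W}(y)/\rho_{x,W}(y')$, with the normalization converting this into the two-sided pointwise bound. The only substantive difference is in the boundary term: the paper notes directly that the extra preimage $y^{(K(x)+1)}$ and the top surviving preimage $(y')^{(K(x))}$ already satisfy the hypothesis of Lemma~\ref{Lem:Est1} (since $2y^{(K(x)+1)}-1 = y^{(K(x))} \le (y')^{(K(x))} < y^{(K(x)+1)}$), so a single application suffices and the final ratio constant is of order $K$ rather than $K^2$. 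Your two-step chain through $y^{(K(x))}$ is valid but unnecessary. Also, a notational quibble: your symbol $K(x)$ for the top index collides with the constant $K$ from Lemma~\ref{Lem:Est1}.
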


\begin{proof}
We establish this by showing that there exists $K>0$ such that for all
$x\in (b,\frac12)$ and all $z,z'\in [b,\frac {b+1}2]$,
$\rho_{x,W}(z)/\rho_{x,W}(z')\le K$.

We first observe that
\begin{equation}\label{Eq:CollapsetoW}
\rho_{x,W}(z)=\sum_{n=1}^\infty 2^{-n}p_x\left(\frac{2^n-1+z}{2^n}\right)
/\mathbb P_x(X_1>\tfrac{b+1}2).
\end{equation}
However, since $p_x$ is supported on $[x,f_\alpha(x)]$, there are
only finitely many non-trivial terms in the sum. Also if $z<z'$ both
belong to $[b,\frac{b+1}2]$, then $z'\le \frac{z+1}2$ so
$$
\frac{2^{n}-1+z}{2^{n}}
< \frac{2^{n}-1+z'}{2^{n}}
\le \frac{2^{n+1}-1+z}{2^{n+1}}.
$$
Hence the number of non-trivial terms in the summation for
$\rho_{x,W}(z)$ is at least the number of terms in the summation for
$\rho_{x,W}(z')$ and at most one more.

Now suppose that $n$ is the largest number such that
$\frac{2^{n}-1+z'}{2^{n}}\le f_\alpha(x)$.  Since
$\frac{2^{n}-1+z}{2^{n}}=2(\frac{2^{n+1}-1+z}{2^{n+1}})-1$, The
previous lemma establishes
$$
p_x\left(\frac{2^{j}-1+z}{2^{j}}\right)\le
K\cdot p_x\left(\frac{2^{j}-1+z'}{2^{j}}\right)\text{\quad for $j=1,\ldots,n$.}
$$
If $\frac{2^{n+1}-1+z}{2^{n+1}}\le f_\alpha(x)$, then 
since 
$$
\frac{2^n-1+z'}{2^n}\le \frac{2^{n+1}-1+z}{2^{n+1}}
<\frac{2^{n+1}-1+z'}{2^{n+1}}
$$
and $\frac{2^n-1+z'}{2^n}=2(\frac{2^{n+1}-1+z'}{2^{n+1}})-1$,
the previous lemma implies 
$$
p_x\left(\frac{2^{n+1}-1+z}{2^{n+1}}\right)
\le K\cdot p_x\left(\frac{2^n-1+z'}{2^n}\right).
$$
Summing these inequalities yields the desired claim.
\end{proof}

The next lemma shows that if the starting density $\rho$ on
$(b,\frac{1}{2})$ is controlled as in equation \eqref{Eq:Condrho}
below, then the condition is invariant under the 
transition operator.
\begin{lemma}\label{Lem:InvClassRho}
Let $\sigma\in(0,1)$. There exists $b_0<\frac 12$ such that for all $b\in (b_0,\frac 12)$
and for every
density $\rho$ on $(b,\frac{1}{2})$ satisfying
\begin{equation}\label{Eq:Condrho}
\rho(x)\le \frac{K}{(x-b)\left(\log\frac{b}{x-b}\right)^{1+\epsilon}},
\end{equation}
then
\[
P\rho(y)\leq \frac{\sigma
  K}{(y-b)\left(\log\frac{b}{y-b}\right)^{1+\epsilon}}
\]
for any $y\in(b,\frac{1}{2})$.

Furthermore, for any $b<\frac 12$, there exists $M$ such that if $\rho$ satisfies \eqref{Eq:Condrho}, then
$P\rho(y)\le M$ for all $y\in [\frac 12,\frac{b+1}2)$. 
\end{lemma}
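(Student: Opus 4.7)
The plan is to substitute the explicit expression for $p_x(y)$ and the pointwise hypothesis on $\rho$ directly into \eqref{eq:dens-evolv}, then rescale in $v := y - b$ to expose a natural small parameter, which turns out to be $M_b^\epsilon/L^\epsilon$ with $M_b := \log\tfrac{1}{2b}$ and $L := \log(b/v)$. Both quantities will be controlled by how close $b$ is to $\tfrac{1}{2}$. For $y \in (b, \tfrac{1}{2})$, the right-branch contribution $\tfrac{1}{2}\rho(\tfrac{y+1}{2})$ in \eqref{eq:dens-evolv} vanishes because $(y+1)/2 > \tfrac{1}{2}$ lies outside the support of $\rho$. Since $p_x(y) = 0$ unless $x \in [f_\alpha^{-1}(y), y]$, and since $y < \tfrac{1}{2} < \tfrac{b+3}{4} < f_\alpha(b)$ forces $f_\alpha^{-1}(y) < b$, the relevant integration range is $x \in [b, y]$. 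Substituting the bound on $\rho$ and the explicit form of $p_x$ yields
\[
P\rho(y) \le \epsilon\alpha^\epsilon K \int_{b}^{y} \frac{(\log\tfrac{1}{2x})^\epsilon\,dx}{(\log\tfrac{x}{y-x})^{1+\epsilon}(y-x)\,(x-b)(\log\tfrac{b}{x-b})^{1+\epsilon}}.
\]

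The key step is the change of variable $x = b + vs$, $s \in [0,1]$, after which the desired inequality $P\rho(y) \le \sigma K/[v(\log\tfrac{b}{v})^{1+\epsilon}]$ becomes
\[
\epsilon\alpha^\epsilon L^{1+\epsilon} \int_{0}^{1} \frac{(\log\tfrac{1}{2(b+vs)})^\epsilon\,ds}{(\log\tfrac{b+vs}{v(1-s)})^{1+\epsilon}(\log\tfrac{b}{vs})^{1+\epsilon}\,s(1-s)} \le \sigma.
\]
Bounding the numerator by $M_b^\epsilon$ (using $b+vs \ge b$) and each of the logarithms in the denominator from below by $L + \log\tfrac{1}{1-s}$ and $L + \log\tfrac{1}{s}$ respectively (the second is exact, the first uses $b+vs \ge b$), and splitting at $s = \tfrac{1}{2}$, on $(0,\tfrac{1}{2})$ one has $(1-s) \ge \tfrac{1}{2}$ and the substitution $u = \log(1/s)/L$ converts the $s$-integral there into
\[
\frac{2}{L^{1+2\epsilon}}\int_{(\log 2)/L}^{\infty}\frac{du}{(1+u)^{1+\epsilon}} = O\bigl(L^{-(1+2\epsilon)}\bigr).
\]
A symmetric analysis on $(\tfrac{1}{2}, 1)$ yields the same bound. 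Combining, $v(\log(b/v))^{1+\epsilon} P\rho(y)/K \le C(M_b/L)^\epsilon$ for an absolute constant $C$.

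Since $v < \tfrac{1}{2} - b$, we have $L \ge \log(b/(\tfrac{1}{2} - b)) \to +\infty$ and $M_b = \log\tfrac{1}{2b} \to 0$ as $b \to \tfrac{1}{2}$, so $(M_b/L)^\epsilon \to 0$ uniformly in $v \in (0, \tfrac{1}{2} - b)$; any $b_0 < \tfrac{1}{2}$ sufficiently close to $\tfrac{1}{2}$ then suffices for the main claim. For the ``furthermore'' statement, for $y \in [\tfrac{1}{2}, \tfrac{b+1}{2})$ the right-branch term again vanishes, and $y < \tfrac{b+1}{2} < \tfrac{b+3}{4} < f_\alpha(b)$ again forces $f_\alpha^{-1}(y) < b$, so the integration runs over $x \in [b, \tfrac{1}{2}]$. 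There $\rho$ has an integrable logarithmic singularity at $x = b$, while $p_x(y)$ is bounded except possibly at the corner $(x,y) = (\tfrac{1}{2}, \tfrac{1}{2})$, where the factor $(\log\tfrac{1}{2x})^\epsilon \sim (2(\tfrac{1}{2}-x))^\epsilon$ dominates the singularity from $y-x \to 0$ and leaves only an integrable behaviour of type $(\tfrac{1}{2} - x)^{\epsilon - 1}$ modulated by a logarithm; a direct estimate then yields a uniform constant $M$ depending only on $b$, $K$, $\alpha$ and $\epsilon$.

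The main obstacle will be executing the rescaled integral analysis uniformly in $v \in (0, \tfrac{1}{2} - b)$: one must verify that the claimed lower bounds for the two logarithmic factors in the denominator, together with the endpoint substitutions near $s = 0$ and $s = 1$, genuinely reduce the problem to a bound of the form $C M_b^\epsilon/L^\epsilon$ without hidden constants (especially $b$- or $v$-dependent ones hidden inside the approximation $\log\tfrac{b+vs}{v(1-s)} \approx L + \log\tfrac{1}{1-s}$) that would spoil the limit $b \to \tfrac{1}{2}$.
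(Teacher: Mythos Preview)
Your proposal is correct and follows essentially the same route as the paper. Both arguments bound $(\log\tfrac1{2x})^\epsilon$ by $M_b^\epsilon$, replace $\log\tfrac{x}{y-x}$ by $\log\tfrac{b}{y-x}$, exploit the symmetry of the resulting integrand about the midpoint (your split at $s=\tfrac12$ is exactly the paper's reflection about $z=\tfrac{b+y}{2}$), and then integrate using the antiderivative of $s^{-1}(\log\tfrac{c}{s})^{-(1+\epsilon)}$; your substitution $u=\log(1/s)/L$ is that antiderivative in disguise, and both routes arrive at the same constant $C\alpha^\epsilon(M_b/L)^\epsilon\to 0$ as $b\to\tfrac12$. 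For the ``furthermore'' clause the paper splits the $x$-integral at an interior point $c\in(b,\tfrac12)$, which makes the uniformity in $y\in[\tfrac12,\tfrac{b+1}{2})$ immediate; your corner analysis at $(x,y)=(\tfrac12,\tfrac12)$ reaches the same conclusion but would benefit from the observation $\xi^\epsilon\le(\eta+\xi)^\epsilon$ (with $\xi=\tfrac12-x$, $\eta=y-\tfrac12$) to make the uniform bound explicit.
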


\begin{proof}
Let $y\le \frac 12$. We start by noticing that for any $0<s<c$
\[
\frac{1}{\left(\log\frac{c}{s}\right)^{1+\epsilon}s}=
\frac{d}{ds}\frac{1}{\epsilon}\left(\log\frac{c}{s}\right)^{-\epsilon}.
\]
Since $p_x(y)>0$ for any $x\in [b,\frac 12)$ and any $x<y$; and
using the hypothesis on $\rho$
\begin{align*}
\int_{b}^{\frac{1}{2}}p_x(y)\rho(x)dx&\le \epsilon\alpha^\epsilon
\int_{b}^{y}
\frac{K\big(\log\tfrac 1{2x}\big)^\epsilon}
{\big(\log\frac{x}{y-x}\big)^{1+\epsilon}(y-x)(x-b)
\left(\log\frac{b}{x-b}\right)^{1+\epsilon}}dx\\
&\le  \epsilon\alpha^\epsilon K\big(\log\tfrac 1{2b}\big)^\epsilon
\int_{b}^{y}
\frac{1}{\big(\log\frac{b}{y-x}\big)^{1+\epsilon}(y-x)(x-b)
\left(\log\frac{b}{x-b}\right)^{1+\epsilon}}dx.\\
\end{align*}
Set $z=\frac{b+y}{2}$ and notice that 
the integrand is symmetric about $z$.
Integrating by parts, the integral becomes
\begin{align*}
I(y):=&-2C(\epsilon)\left[
\left(\log\frac{b}{y-x}\right)^{-\epsilon}\cdot 
\left(\frac{1}{(x-b)(\log\frac{b}{x-b})^{1+\epsilon}}\right)
\right]_z^y\\
&\quad +2C(\epsilon)\int_z^{y}
\left(\log\frac{b}{y-x}\right)^{-\epsilon}\cdot 
\frac{d}{dx}\left(\frac{1}{(x-b)(\log\frac{b}{x-b})^{1+\epsilon}}\right)dx
\\
&\le 2C(\epsilon)
\left(\log\frac{b}{y-z}\right)^{-\epsilon}\cdot 
\frac{1}{(z-b)\left(\log\frac{b}{z-b}\right)^{1+\epsilon}}
\\
&\le 4C(\epsilon)\left(\log\frac{b}{y-z}\right)^{-\epsilon} 
\frac{1}{(y-b)\left(\log\frac{b}{y-b}\right)^{1+\epsilon}}\\
&\leq  4K\alpha^\epsilon\big(\log\tfrac 1{2b}\big)^\epsilon
\left(\log\frac{b}{\frac{1-2b}{4}}\right)^{-\epsilon} 
\frac{1}{(y-b)\left(\log\frac{b}{y-b}\right)^{1+\epsilon}},
\end{align*}
where $C(\epsilon)=\alpha^\epsilon K\big(\log\tfrac 1{2b}\big)^\epsilon$
and where, in the first inequality, we used the fact that derivative in the second line is negative
for all $x\in [b,\frac 12]$ for all $b$ sufficiently close to $\frac 12$.
Since  
\[
\lim_{b\rightarrow\frac{1}{2}}4\big(\log\tfrac 1{2b}\big)^\epsilon
\left(\log\frac{b}{\frac{1-2b}{4}}\right)^{-\epsilon}= 0,
\]
the first conclusion follows.

For $y\ge \frac 12$, 
$$
I(y)\le 2\epsilon C(\epsilon)\int_b^{\frac 12}
\frac{1}{\big(\log\frac{b}{y-x}\big)^{1+\epsilon}(y-x)(x-b)
\left(\log\frac{b}{x-b}\right)^{1+\epsilon}}dx.
$$
For any $c\in (b,\frac 12)$, $1/((\log\frac{b}{y-x})^{1+\epsilon}(y-x))$ is uniformly
bounded above for $x\in [b,c]$ and $y\in[\frac 12,\frac{b+1}2]$ and
$1/((x-b)(\log\frac b{x-b})^{1+\epsilon})$ is integrable. 
Similarly, on $[c,\frac 12]$, $1/((x-b)(\log\frac b{x-b})^{1+\epsilon})$ is bounded above
and the functions $1/((\log\frac{b}{y-x})^{1+\epsilon}(y-x))$ for $y\in[\frac12,\frac{b+1}2]$
are  integrable over $[c,\frac 12]$ with integral uniformly bounded in $y$,  so that the second conclusion holds.
\end{proof}

The following lemma shows that the gaps between consecutive entries to $W=[b,\frac{b+1}2)$ 
are dominated by a random variable $Z$ with an exponential tail. 
If $X_0\in W$, we define $\tau_{W,i}$ to be the time of the $i$th re-entry to $W$.
That is $\tau_{W,0}=0$ and
$\tau_{W,i+1}=\min\{n>\tau_{W,i}\colon X_{n-1}\not\in W, X_n\in W\}$. 
 Recall that  $H=[\frac{1}2,\frac{b+1}2)$  is the subset mapped to the left of the petite set. We study the distribution of some random variables
conditioned on the event $\{\tau_{W,1}\le\tau_H\}$. Notice that since $H\subset W$, this
is the event that the process leaves $W$ before first hitting $H$.

\begin{lemma}\label{Lem:EstReturnW2}
There exists an integer-valued random variable $Z$
such that for each absolutely continuous distribution on $(b,\frac 12)$
with density $\rho$ bounded as in \eqref{Eq:Condrho}, there exists a random variable $Y_\rho$ such that
the following properties are satisfied:
\begin{enumerate}
\item If $X_0$ is continuously distributed on $[b,\frac 12)$ with density $\rho$, then
conditional on $\{\tau_{W,1}\le \tau_H\}$, $\tau_{W,1}\le Y_\rho$. \label{it:W<Y}
\item For all $n>0$ and all $\rho$ satisfying \eqref{Eq:Condrho}, \label{it:Z>Y}
$$
\mb P_\rho(Y_\rho\ge n|\tau_{W,1}\le\tau_H)\le \mb P(Z\ge n);
$$
\item There exists $a>0$, such that $\mb E e^{aZ}<\infty$;\label{it:expmoment}
\item Let $X_0$ be absolutely continuously distributed on $[b,\frac 12]$ with density $\rho$ satisfying
\eqref{Eq:Condrho}.
For any $n\ge 2$, conditioned on $\{\tau_{W,1}\le\tau_H\}$; and given that $Y_\rho=n$, the distribution 
of $X_{\tau_{W,1}}$, the position at which the system reenters $W$, is absolutely continuous
with density bounded above and below by constants that do not depend on $\rho$ or $n$.
\label{it:induct}
\end{enumerate}
\end{lemma}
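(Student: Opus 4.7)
My plan is to set $Y_\rho := \tau_{W,1}$ on the event $\{\tau_{W,1} \le \tau_H\}$ (extending it arbitrarily off the event), making item (1) tautological. On this event the process iterates inside $[b,\tfrac 12)$ until a first time $\tau_U$ at which $X_{\tau_U} \ge \tfrac{b+1}{2}$ (a jump over $H$); thereafter only the right branch $g(x)=2x-1$ is applied, and the re-entry time satisfies $\tau_{W,1}=\tau_U+L$, where $L = \lceil \log_2\tfrac{1-b}{2(1-X_{\tau_U})}\rceil$. The task then reduces to dominating $\tau_U$ and $L$ separately and controlling the density at re-entry.

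For items (2)–(3), I would construct $Z$ as the sum of two independent geometric variables dominating $\tau_U$ and $L$. Iterating Lemma \ref{Lem:InvClassRho} shows that the density of $X_k$ on $(b,\tfrac 12)$ is pointwise bounded by $\sigma^k K/((x-b)(\log(b/(x-b)))^{1+\epsilon})$, whose integral over $(b,\tfrac12)$ is $O(\sigma^k)$; since $\{\tau_U > k,\ \tau_{W,1}\le\tau_H\}\subset\{X_k\in(b,\tfrac 12)\}$, this gives $\mb P_\rho(\tau_U > k,\ \tau_{W,1}\le\tau_H)=O(\sigma^k)$ uniformly in $\rho$. For $L$, the event $\{L>\ell\}$ forces $X_{\tau_U}$ within distance $(1-b)/2^{\ell+1}$ of $1$; a uniform upper bound on the density of $X_{\tau_U}$ on $[\tfrac{b+1}{2},1)$—established by the same type of integral estimate as in Lemma \ref{Lem:InvClassRho} but applied to $[\tfrac{b+1}{2},1)$ and using the explicit form of $p_x$—makes $\mb P(L>\ell)$ geometrically small. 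To transfer these joint estimates into conditional ones, I also need $\mb P(\tau_{W,1}\le\tau_H)$ bounded below uniformly over the class: this follows since every density in the class carries a uniformly positive mass within a bounded neighbourhood of $\tfrac12$, from which Lemma \ref{Lem:Bounds} guarantees a uniformly positive probability of overshooting $H$ in one step.

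For item (4), the density of $X_{\tau_{W,1}}$ conditional on $\{\tau_{W,1}=n,\ \tau_{W,1}\le\tau_H\}$ is a finite mixture over decompositions $(k,L)$ with $k+L=n$, $k\ge 1$, and over the pre-jump position $x=X_{k-1}$, each contribution being $2^{-L}p_x((2^L-1+z)/2^L)$ weighted by the density of $X_{k-1}$ inherited from $P^{k-1}\rho$. The upper bound follows by applying Lemma \ref{Lem:Est1} term-by-term exactly as in the proof of Lemma \ref{Lem:BoundsW} and then averaging. The main obstacle is the lower bound, since conditioning on the total time $n$ could in principle concentrate all mass on large values of $L$, for which individual terms are small. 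I would resolve this by showing that, uniformly in $n\ge 2$ and in $\rho$ satisfying \eqref{Eq:Condrho}, the conditional probability of $\{L=1\}$ given $\{\tau_{W,1}=n\}$ stays bounded below; on this sub-event Lemma \ref{Lem:BoundsW} directly supplies a uniform lower bound $1/(2C_\#)$ on the density, which propagates to the full mixture. Establishing this uniform lower bound on $\mb P(L=1\mid \tau_{W,1}=n)$ requires a two-sided comparison of the contributions from different $L$ to the joint law of $(\tau_U,L)$, leveraging the density estimates of Lemma \ref{Lem:Est1} together with the invariance of the class \eqref{Eq:Condrho}, and is the technically hardest step of the argument.
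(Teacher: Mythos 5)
Your proposal takes a genuinely different route at the one point where the paper makes a deliberate design choice, and it is exactly there that the gap appears. The paper does \emph{not} set $Y_\rho=\tau_{W,1}$; it defines $Y_\rho=\tau_{W,1}+\mathsf{Geom}(\tfrac12)$ on $\{\tau_{W,1}\le\tau_H\}$ (and $Y_\rho=0$ otherwise), and explicitly explains why: \say{to ensure that the return time variable $Y_\rho$ does not completely determine $\tau_{W,1}$ and thereby overly constrain the location of $X_{\tau_{W,1}}$.} With your definition $Y_\rho=\tau_{W,1}$, conditioning on $Y_\rho=n$ together with the exit time $k$ and pre-jump position $x$ forces $X_k$ into a \emph{single} dyadic interval $I_{n-k}$; when $n-k$ equals the maximal feasible value $l(x)$, $X_k$ lands only in the truncated interval $I_l^-=I_l\cap[0,f_\alpha(x))$, and $X_{\tau_{W,1}}$ is confined to the proper subinterval $g^l(I_l^-)\subsetneq W$. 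So the conditional density is \emph{not} bounded below on $W$ term-by-term, and one must show the mixture over $k$ recovers coverage. You identify this correctly, but your proposed repair --- a uniform lower bound on $\mb P_\rho(L=1\mid\tau_{W,1}=n)$ over all $n$ and all admissible $\rho$ --- is asserted, not proved, and is not in fact safe. Heuristically the joint weight of the split $(\tau_U,L)=(k,n-k)$ scales like $\mb P_\rho(\tau_U=k)\cdot 2^{-(n-k)}$; whether $L=1$ (i.e.\ $k=n-1$) dominates depends on whether the effective escape rate of the sub-Markov chain on $(b,\tfrac12)$ is slower or faster than $\tfrac12$. Lemma~\ref{Lem:InvClassRho} gives only an upper bound ($\mb P_\rho(\tau_U>k)\le C\sigma^k$); there is no matching lower bound uniform over $\rho$ satisfying \eqref{Eq:Condrho}, and if the escape is faster than geometric rate $\tfrac12$ the conditional weight of $L=1$ tends to $0$ as $n\to\infty$. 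This is exactly the degeneracy the paper's geometric randomization is designed to sidestep: with $Y_\rho=\tau_{W,1}+\mathsf{Geom}(\tfrac12)$, conditioning on $Y_\rho=n$ spreads probability across $\tau_{W,1}\in\{k+1,\ldots,k+l\}$ with weights $2^{-(n-k-j+1)}$, so the density of $X_k$ is (by Bayes and Lemma~\ref{Lem:Est1}) within a fixed factor of uniform on each $I_j$ and on $I_l^-$ is comparable to that on $I_{l-1}$; collapsing through the right branch then yields a density on $W$ uniform up to a universal factor, with no assumption on escape rates whatsoever.

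Two secondary remarks. First, for items (2)--(3) your bound on $L$ rests on \say{a uniform upper bound on the density of $X_{\tau_U}$ on $[\tfrac{b+1}{2},1)$}; as written this pointwise bound is doubtful near $y=1$ (where $p_x(y)\to\infty$ as $x\to\tfrac12$), and the paper instead bounds $\mb P(L\ge n)$ indirectly by controlling the cumulative density of the pre-jump position in a $2^{-n}$-neighbourhood of $\tfrac12$ via the resolvent $\sum_k P_{[b,1/2]}^k\rho$ and Lemma~\ref{Lem:InvClassRho}. That is a safer argument, though the conclusion (geometric tails) is the same. Second, the part of your item (4) argument that \emph{does} work --- that given $L=1$ and a pre-jump position $x$, Lemma~\ref{Lem:Est1} forces the density of $X_{\tau_U}$ to be comparable to uniform on $[\tfrac{b+1}{2},\tfrac{b+3}{4})$ and hence $X_{\tau_{W,1}}$ comparable to uniform on $W$ --- is sound, but it only yields the lemma conditional on the unestablished weight bound. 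The fix is not to fight the constraint but to randomize it away, as the paper does.
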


\begin{proof}
Let $\rho$ be a probability density on $[b,\frac 12]$ satisfying \eqref{Eq:Condrho}. 
Let $X_0$ be distributed with density $\rho$. Notice that by Lemma \ref{Lem:Bounds},
the event $\{\tau_{W,1}\le \tau_H\}$, that is that the system leaves $W$ before
entering $H$, has probability bounded below by a constant $r>0$. 
We define $Y$ by
$$
Y_\rho=\begin{cases}
0&\text{if $\tau_H<\tau_{W,1}$;}\\
\tau_{W,1}+\mathsf{Geom}(\tfrac12)&\text{otherwise,}
\end{cases}
$$
where $\mathsf{Geom}(\tfrac 12)$ denotes an independent geometric random variable 
with parameter $\frac 12$, so that conclusion \ref{it:W<Y} is evident.
To establish conclusions \ref{it:Z>Y} and \ref{it:expmoment}, it suffices to show that there 
exists $c>0$ such that for all $\rho$ satisfying \eqref{Eq:Condrho}, one has
$\mb P_\rho(\tau_{W,1}\ge n)\le e^{-nc}$ for all $n$. It then follows that 
there exists $c'>0$ such that $\mb P(Y_\rho\ge n)\le e^{-nc'}$ for all $n$.
Then $Z$ can be defined by $Z=n$ with probability 
$e^{-nc'}/r$ for all $n>n_0$ where $n_0$ is chosen so that $\sum_{n>n_0}e^{-nc'}/r<1$;
and $Z=n_0$ with probability $1-\sum_{n>n_0}e^{-nc'}/r$.

Notice that in order that $\tau_{W,1}\ge 3n$, at least one of the following must occur: 
the system must remain in $W$ for $n$ steps; the system must exit $W$
to a point above $(b+2^{n}-1)/2^{n}$ (so that it takes $n$ or more steps to re-enter $W$);
or the geometric random variable must take a value of $n$ or above.
Then $\mb P(\tau_{W,1}\ge 3n)$ is at most the sum of these three probabilities. 
The first of these has probability at most $(1-r)^{n}$ by Lemma \ref{Lem:Bounds}.
The third event has probability $2^{-n}$.

For the second event, note that for $x\in [b,\frac12]$, it is only possible that 
$f_\gamma(x)\ge (b+2^{n}-1)/2^{n}$ if $x>(b+2^{n}-1)/2^{n+1}$,
that is, if $x\in (\frac 12-\frac{1-b}{2^{n+1}},\frac 12)$. 
Define the operator $P_{[b,\frac 12]}$, mapping $L^1([b,\frac{1}2])$ to itself by 
$P_{[b,\frac 12]}f(x)=\mathbf 1_{[b,\frac 12]}(x)Pf(x)$. 
By Lemma \ref{Lem:InvClassRho}, the function 
$g(x)=\sum_{n=1}^\infty P_{[b,\frac 12]}^n(\rho)(x)$ satisfies 
\begin{equation}\label{Eq:sumbound}
g(x)\le \frac 1{1-\sigma}\cdot \frac{K}{(x-b)\left(\log \frac{b}{x-b}\right)^{1+\epsilon}},
\end{equation}
which is bounded above in a neighbourhood of $\frac 12$ by some number $c'$ (which does not depend
on the initial distribution). 
Hence $\mb P(\tau_{[\frac 12-h,\frac 12]}<\tau_{W^c})<hc'$ for all small $h$. 
In particular, the probability of hitting $(\frac 12-\frac{1-b}{2^{n+2}},\frac 12)$ is bounded 
above by a constant multiple of $2^{-n}$, where the constant does not depend on $\rho$. 

To establish conclusion \ref{it:induct}, suppose we are given that $\tau_{W,1}\le \tau_H$
and 	$Y_\rho=n$. We additionally condition on the time taken for the system to exit $W$
and the location of the system prior to exiting $W$. We establish bounds on the density of 
$X_{\tau_{W,1}}$ based on this additional information. Then the bounds without this additional 
conditioning are simply a convex combination of these bounds. 

Thus suppose that the system exits $W$ for the first time at time $k$, 
and we are given that $X_{k-1}=x\in[b,\frac 12]$. Since $Y_\rho=n$, the number of steps 
to reenter $W$ after leaving is one of 1, 2, \ldots, $n-k$. That is $X_k$ belongs to one of
the intervals 
$$
I_j=\left(\frac{b+2^{j}-1}{2^{j}},\frac{b+2^{n-j+1}-1}{2^{n-j+1}}\right],
$$
for $j$ in the range 1 to $\min\{n-k,l\}$ where $l$ is such that 
$f_\alpha(x)\in I_l$. Since $f_\alpha(b)>\frac {b+3}4$, 
we have $l\ge 3$. Since typically
$f_\alpha(x)<\max I_l$, one knows that $X_k$ may only occupy a 
(possibly small, depending on $x$) portion of $I_l$, 
namely $I_l^-=I_l\cap [0,f_\alpha(x))$. 
Hence if $\tau_{W,1}=k+l$, one sees that $X_{\tau_{W,1}}$ is restricted to a 
possibly small sub-interval of $W$ (and therefore its conditional density may not be bounded away from zero). 
This is the reason that we introduced the geometric random variable: to ensure that the return time 
variable, $Y_\rho$ does not completely determine $\tau_{W,1}$ and thereby 
overly constrain the location of $X_{\tau_{W,1}}$.

Conditioned on $X_{k-1}=x$,
the event $\{X_k\ge \frac{b+1}2, \tau_{W,1}=k+j\}$ 
has probability $P(x,I_j)$ for $j=1,\ldots,l-1$
and $\{X_k\ge \frac{b+1}2, \tau_{W,1}=k+l\}$ has probability $P(x,I_l^-)$. We therefore have
$\mb P(Y_\rho=n, X_k>\tfrac{b+1}2|X_{k-1}=x)$ is
\begin{equation*}
\begin{cases}
\sum_{j=1}^{l-1}2^{-(n-k-j+1)}P(x,I_j)+
2^{-(n-k-l+1)}P(x,I_l^-)&\text{if $n\ge k+l$;}\\
\sum_{j=1}^{n-k-1}2^{-(n-k-j+1)}P(x,I_j)
&\text{otherwise.}
\end{cases}
\end{equation*}

Now an application of Bayes' theorem, together with Lemma \ref{Lem:Est1}, 
shows that conditioned on $X_{k-1}=x$, $X_k\ge\frac{b+1}2$ and $Y=n$, the distribution of
$X_k$ on each interval above is uniform up to a multiplicative factor of the fixed constant $K$,
except for $I_l$ in which the density drops off to 0, but has the property that on $I_l^-$,
the density is within a factor of $K$ of that on $I_{l-1}$. 
Since in the next $\tau_{W,1}-k$ steps, the interval is mapped linearly onto $[b,\frac{b+1}2]$, 
the distribution of $X_{\tau_{W,1}}$ is uniform up to a multiplicative factor of $2K$
on $[b,\frac{b+1}2]$. 
\end{proof}

The following lemma states that after each return to the set $W$, at least a
fixed positive proportion of the mass ends up in the subset $H=[\frac 12,\frac{b+1}2)$ of $W$.
Recall that $\tau_{W,i}$ is defined to be the time of the $i$th reentry to $W$.
\begin{lemma}\label{Lem:ReturnstoW}
There exists $q'>0$ such that for every $x\in(b,\frac{b+1}{2})$
\[
\mb P_x\left(\tau_H\le \tau_{W,1}\right)>q'.
\]
\end{lemma}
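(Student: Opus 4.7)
The plan is to reduce the statement to a uniform lower bound on $\mb P_x(X_{\tau_{W,1}} \in H)$ (which suffices because $H \subset W$ makes this event contained in $\{\tau_H \leq \tau_{W,1}\}$), and then to split according to whether $x \in (b, 1/2)$ or $x \in [1/2, (b+1)/2) = H$. For $x \in (b, 1/2)$ I would invoke Lemma \ref{Lem:Bounds} to obtain $\mb P_x(X_1 > (b+1)/2) \geq r > 0$; on this event $X_1 \notin W$, so $\tau_{W,1}$ coincides with the first entry time of the orbit into $W$ starting from the point $X_1$, and Lemma \ref{Lem:BoundsW} bounds the corresponding conditional density of $X_{\tau_{W,1}}$ on $W$ below by $1/C_\#$. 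Integrating over $H$ then yields
\[
\mb P_x(X_{\tau_{W,1}} \in H \mid X_1 > (b+1)/2) \geq |H|/C_\# = b/(2C_\#),
\]
and hence $\mb P_x(X_{\tau_{W,1}} \in H) \geq rb/(2C_\#) =: q_1'$.

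For $x \in H$ the first iterate is deterministic, $X_1 = 2x - 1 \in [0, b)$, and the orbit has already left $W$ by time $1$. Since $f_\gamma(y) > y$ for every $\gamma \in [\alpha, \infty)$ and every $y \in (0, 1/2)$, the trajectory in $[0, b)$ is strictly increasing and almost surely exits at some random time $T \geq 2$, with $X_{T-1} \in (x_*, b)$ where $x_* := f_\alpha^{-1}(b)$. Fix $\delta > 0$ small enough that $f_\alpha(b - \delta) > (b+1)/2$. I would argue that, with uniform positive probability in $X_1$, the orbit enters $[b - \delta, b)$ before crossing $b$, and that from such a position the crossing jump exceeds $(b+1)/2$ with probability at least some $r_0 > 0$ by an argument analogous to Lemma \ref{Lem:Bounds}. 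Conditional on such a big crossing, Lemma \ref{Lem:Est1} bounds the density ratios of the landing point on $[(b+1)/2, 1]$, and iterating the right branch $g$ until returning to $[0, (b+1)/2)$ produces a density on $W$ bounded below by a constant via exactly the summation over the pre-images of $g^n$ used in the proof of Lemma \ref{Lem:BoundsW}. Integrating over $H$ then gives $\mb P_x(X_{\tau_{W,1}} \in H) \geq q_2'$ uniformly in $x \in H$, and the lemma follows with $q' := \min(q_1', q_2')$.

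The main obstacle lies in Case 2, specifically in verifying that $X_{T-1} \in [b - \delta, b)$ with uniform positive probability regardless of how small $X_1$ may be. A trajectory starting deep inside $[0, b)$ can in principle cross $b$ via a single jump originating from a position well below $b - \delta$, and the probability of such a ``direct skip'' must be controlled. Monotonicity of the orbit combined with a careful analysis of the jumps near $b$ -- similar in spirit to the density propagation argument in Lemma \ref{Lem:InvClassRho} -- should provide the uniform lower bound on the probability that at least one step falls in $[b - \delta, b)$; I expect this to be the technically most delicate step of the proof.
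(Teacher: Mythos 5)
Your treatment of the case $x\in(b,\tfrac12)$ is correct and is exactly the paper's argument: use Lemma \ref{Lem:Bounds} to get $\mathbb P_x(X_1>\tfrac{b+1}2)\ge r$, then Lemma \ref{Lem:BoundsW} to lower-bound the density of $X_{\tau_{W,1}}$ on $W$ by $C_\#^{-1}$, then integrate over $H$.

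However, your second case is a long detour that the paper avoids entirely. For $x\in H=[\tfrac12,\tfrac{b+1}2)$ the process starts in $H$, so with the convention used throughout the Markov chain section (compare $\tau_{W,0}=0$) one has $\tau_H=0<\tau_{W,1}$, hence $\mathbb P_x(\tau_H\le\tau_{W,1})=1$. That is the entire proof of Case 2 in the paper. You instead interpreted $\tau_H$ as a strict return time, which forces you to track the density of $X_{\tau_{W,1}}$ after $X_1=2x-1$ drops into $[0,b)$ at an uncontrolled depth; this is a genuinely harder problem (Lemma \ref{Lem:BoundsW} only covers starting points in $(b,\tfrac12)$ with $X_1>\tfrac{b+1}2$, not starting points deep in $[0,b)$), and you correctly flag that your density-propagation step is unfinished. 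As stated, your proof of Case 2 is therefore incomplete. The fix is not to try to close that gap but to notice that it is not there: $x\in H$ already realizes the event. That this suffices is also consistent with how the lemma is used in Proposition \ref{Prop:StretExp}, where $N$ counts re-entries to $W$ \emph{before} first hitting $H$, so a re-entry landing in $H$ terminates the count immediately.

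One small quantitative remark: your bound $q_1'=rb/(2C_\#)$ is the correct value of $r\cdot|H|/C_\#$ with $|H|=b/2$; the paper writes $rb/C$, an immaterial constant slip.
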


\begin{proof}
If $x\in [\frac 12,\frac{b+1}2)$, then $\tau_H=0$ and $\tau_{W,1}>0$ so that 
$\mb P_x(\tau_H\le\tau_{W,1})=1$.
If $x<\frac 12$ then by point (i) in Lemma \ref{Lem:Bounds}, 
\[
\mb P_x\left(X_1>\tfrac{b+1}{2}\right)>r.
\]
It then follows from Lemma \ref{Lem:BoundsW} that
$X_{\tau_{W,1}}$ is absolutely continuous with density at least
$rC^{-1}$. In particular, $\mb P_x(X_{\tau_{W,1}}\in H)>\frac {rb}{C}$,
so that $\mb P_x(\tau_H\le \tau_{W,1})\ge \frac{rb}{C}$.
\end{proof}

We now show that the entry time into $H$ has exponential
tails.
\begin{proposition}\label{Prop:StretExp}  Given any probability measure $\mu$
with density $\rho$ satisfying \eqref{Eq:Condrho}, there is $c'>0$
such that for any $t\in \N$,
\[
\mb P_\mu(\tau_H>t)\leq  e^{-c't}.
\]
\end{proposition}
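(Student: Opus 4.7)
The plan is to express $\tau_H$ as a sum over a geometrically bounded number of ``rounds'', each with exponential-tail duration, and then apply a Chernoff-type bound. Since $\rho$ satisfies \eqref{Eq:Condrho}, $\mu$ is supported on $[b,\tfrac 12)\subset W$, so $X_0\in W$ almost surely. With $\tau_{W,i}$ the successive re-entry times to $W$ (and $\tau_{W,0}=0$), set $N:=\min\{i\ge 1:\tau_H\le \tau_{W,i}\}$, the index of the first round during which $H$ is visited; then $\tau_H\le \tau_{W,N}=\sum_{i=1}^N D_i$ where $D_i:=\tau_{W,i}-\tau_{W,i-1}$.

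First I would show $N$ is geometrically dominated. Lemma \ref{Lem:ReturnstoW} gives, for every $x\in W$, the uniform bound $\mb P_x(\tau_H\le \tau_{W,1})\ge q'$. Applying this at each $\tau_{W,i-1}$ via the strong Markov property yields $\mb P_\mu(N>k)\le (1-q')^k$.

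Next I would control the individual round durations $D_i$. By the strong Markov property, $D_i$ has the same law as $\tau_{W,1}$ run from the density $\rho_i$ of $X_{\tau_{W,i-1}}$. The key step is to show that $\rho_i$ satisfies \eqref{Eq:Condrho} with a constant $K$ independent of $i$. For $i=1$ this is the hypothesis; for $i\ge 2$, restricting to $\{N\ge i\}$ forces round $i-1$ to have been a ``failure'' (no visit to $H$), and Lemma \ref{Lem:EstReturnW2}(4) then supplies a universal upper bound on $\rho_i$. Since $(x-b)(\log\tfrac{b}{x-b})^{1+\epsilon}$ is bounded on $(b,\tfrac 12)$, any uniformly bounded density on that interval automatically satisfies \eqref{Eq:Condrho} with a single constant $K$. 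The unconditional exponential-tail bound on $\tau_{W,1}$ obtained within the proof of Lemma \ref{Lem:EstReturnW2} (whose constants depend only on $K$, $b$ and $\epsilon$) then allows one to construct, via a standard coupling argument based on the strong Markov property, i.i.d.\ random variables $Z_1,Z_2,\ldots$ with $\mb E e^{a Z_i}<\infty$ for some $a>0$ such that $D_i\le Z_i$ whenever $N\ge i$.

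Combining the two ingredients, $\tau_H$ is stochastically dominated by $\sum_{i=1}^N Z_i$ where $(Z_i)$ are i.i.d.\ copies of a variable $Z$ with exponential moments and $N$ is independent and geometrically dominated. A standard Chernoff-type bound using the moment generating function of such a geometric random sum then yields $\mb P_\mu(\tau_H>t)\le Ce^{-c't}$. The main obstacle is the uniform-class-preservation step above: verifying that after each failed round the new density still satisfies \eqref{Eq:Condrho} with a \emph{single} constant $K$, so that one single random variable $Z$ dominates all the $D_i$'s. This rests on combining the universal upper bound in Lemma \ref{Lem:EstReturnW2}(4) with the boundedness of $(x-b)(\log\tfrac{b}{x-b})^{1+\epsilon}$ on $(b,\tfrac 12)$.
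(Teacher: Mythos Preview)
Your overall strategy is the same as the paper's, but there is a genuine gap in the treatment of the final round $D_N$. You bound $\tau_H\le\tau_{W,N}=\sum_{i=1}^N D_i$ and then claim every $D_i$ is dominated by an exponential-tail variable $Z_i$. That fails for $i=N$. On the event $\{N=i\}$ the process hits $H$ during round $i$, then is mapped by the right branch into $(0,b)$, and only after that does it re-enter $W$; the re-entry time from $(0,b)$ has polynomial tails (this is exactly the slow-escape phenomenon near the neutral fixed point), so $D_N$ is not stochastically bounded by an exponential-tail variable. Hence the coupling $D_i\le Z_i$ cannot be made to hold for all $i\le N$, and the Chernoff argument on $\sum_{i=1}^N Z_i$ does not control $\tau_{W,N}$.

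The source of the error is the appeal to an ``unconditional exponential-tail bound on $\tau_{W,1}$'' from the proof of Lemma~\ref{Lem:EstReturnW2}. No such unconditional bound is established there: the quantity with exponential tails is $Y_\rho$, which is \emph{defined to be $0$} on $\{\tau_H<\tau_{W,1}\}$; equivalently, the exponential bound on $\tau_{W,1}$ holds only conditional on $\{\tau_{W,1}\le\tau_H\}$, i.e.\ on failure rounds. The paper handles this by writing $\tau_H=\sum_{i=1}^{N'}Y_i+V$, where each $Y_i$ is a failure-round duration (so the conditional exponential bound applies) and $V$ is the residual time spent in $[b,\tfrac12]$ during the successful round before entering $H$; Lemma~\ref{Lem:Bounds} gives $V$ geometric tails directly. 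Your decomposition can be repaired the same way: replace the wasteful bound $\tau_H\le\tau_{W,N}$ by $\tau_H=\sum_{i=1}^{N-1}D_i+(\tau_H-\tau_{W,N-1})$ and bound the last piece via Lemma~\ref{Lem:Bounds}.
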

\begin{proof}
First of all define random variables
$Y_i:=\tau_{W,i}-\tau_{W,i-1}$, and the random variable $N$ equal to
the number of reentries to $W$ up to the first entry to $H$ (recalling that $H\subset W$). With these
definitions, the time to enter $H$ is given by
$\tau_H=\sum_{i=1}^NY_i+V$, where $V$ is the number of steps
spent in $[b,\frac 12]$ after the $N$th reentry to $W$ prior to entering $H$ (note that $V$ may be 0 if
the first point of $W$ that the system enters on the $N$th visit is in $H$). 

Notice that if the constant $K$ in Lemma \ref{Lem:InvClassRho} is chosen sufficiently
large, then conclusion \ref{it:induct} of Lemma \ref{Lem:EstReturnW2}
shows that conditional on $\{\tau_{W,1}<\tau_H\}$, 
the density of $X_{\tau_{W,1}}$ on $[b,\frac 12]$ satisfies \eqref{Eq:Condrho}.

Lemma \ref{Lem:EstReturnW2} and the Markov property, together with Strassen's theorem
 \cite[Theorem 2.4]{Liggett},
imply that the sequence of random variables $(Y_i)$ may be coupled with a sequence
$(Z_i)$ of independent identically distributed random variables with exponential tails 
in such a way that $Y_i\le Z_i$ for $i=1,\ldots,N$.
Lemma \ref{Lem:ReturnstoW} implies that $N$ also has exponential tails.

Let $m>\mb EZ$. A Chernoff bound (see \cite[Section 2.7]{Durrett}) shows that 
there exists $\delta>0$ such that 
%
\begin{equation}\label{Eq:ExpBound}
\mb P_\mu\left(\sum_{i=1}^nZ_i>mn\right)\leq e^{-\delta n}.
\end{equation}
The event $\{\tau_H>t\}$ is a subset of $\{V>\frac t2\}\cup \{N > \frac t{2m}\}
\cup \{N\le \frac{t}{2m},\, Z_1+\ldots +Z_N>\frac t2\}$ and this is a subset of $\{V>\frac t2\}\cup
\{N>\frac t{2m}\}
\cup \{Z_1+\ldots +Z_{\floor{\frac{t}{2m}}} > m\floor{\frac{t}{2m}} \}$.
For the first set $\mb P(V>t/2)\le (1-r)^{\floor{t/2}}$ by Lemma 
\ref{Lem:Bounds}. For the second set,
\[
\mb P_\mu\left(N>\tfrac {t}{2m}\right)\leq e^{-c_2 \frac{t}{2m}},
\]
since $N$ has exponential tails, and for the  third
\[
\mb P_\mu\left( Z_1+\ldots +Z_{\floor{\frac{t}{2m}}} > 
m\floor{\tfrac{t}{2m}}\right)\le e^{-\delta \frac{t}{2m}}
\]
from \eqref{Eq:ExpBound}.
\end{proof}

The following lemma proves that Proposition \ref{Prop:StretExp} can be applied to the density of $X_{\tau_W}$.

\begin{lemma}\label{Lem:LeaveC}
There exists $a<1$ and $n_0$ such that
for all $x$ in the petite set $C=[f_\alpha^{-1}b,b)$, one has
$\mb P_x(\tau_W>n)\le a^n$ for all $n\ge n_0$

There exists a $K>0$ such that for all $x$ in the petite set $C$, 
the distribution of $X_{\tau_W}$, the position at the first entrance
to $W$, is absolutely continuous with density satisfying 
\begin{equation}
\rho(y)\le \frac K{(\log \frac b{y-b})^{1+\epsilon}(y-b)}\label{Eq:CondrhoPlus}
\end{equation}
for all $y\in W$ (that is, the density satisfies the condition
\eqref{Eq:Condrho} extended to the full interval $W$).
\end{lemma}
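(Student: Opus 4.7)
For the first conclusion I would establish a uniform minorisation: there exist integers $M_0, L_0$ and a probability $p_0 > 0$ such that $\mb P_x(\tau_W \le M) \ge p_0$ for every $x \in C \cup [(b+1)/2, f_\alpha(b)]$, where $M := M_0 + L_0$. Fix some small $\delta > 0$; the event $A$ that $\gamma_0,\ldots,\gamma_{M_0-1} \in [\alpha,\alpha+\delta]$ has probability $\nu_{\alpha,\epsilon}([\alpha,\alpha+\delta])^{M_0} > 0$. On $A$, each step-size $x_k(2x_k)^{\gamma_k}$ is bounded below by $s_{\min} := f_\alpha^{-1}(b)(2f_\alpha^{-1}(b))^{\alpha+\delta} > 0$ as long as $x_k \in C$, so taking $M_0$ with $M_0 s_{\min} > b - f_\alpha^{-1}(b)$ forces the orbit to exit $C$ within the first $M_0$ steps. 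If it lands in $W$ the claim follows; otherwise it lands in $[(b+1)/2, f_\alpha(b)]$, from where the deterministic right-branch iteration $y \mapsto 2y-1$ returns the orbit to $W$ in at most $L_0 := \lceil \log_2((1-b)/(1-f_\alpha(b))) \rceil$ further steps. Since an orbit starting in $C$ necessarily stays in $[f_\alpha^{-1}(b), f_\alpha(b)]$, on $\{\tau_W > M\}$ we have $X_M \in C \cup [(b+1)/2, f_\alpha(b)]$, so the minorisation and the Markov property iterate to give $\mb P_x(\tau_W > kM) \le (1-p_0)^k$, whence the claimed exponential decay $\mb P_x(\tau_W > n) \le a^n$ for a suitable $a < 1$ and all $n \ge n_0$.

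For the density bound I would decompose the law of $X_{\tau_W}$ according to whether the last transition into $W$ is a direct jump from $C$ or a right-branch iteration from an earlier overshoot. The direct contribution at $y \in W$ is dominated by $p_x(y)$ integrated against a sub-probability on $x \in C$, so it suffices to show
\[
p_x(y) = \frac{\epsilon\alpha^\epsilon(\log(1/(2x)))^\epsilon}{(y-x)(\log(x/(y-x)))^{1+\epsilon}} \le \frac{K_1}{(y-b)(\log(b/(y-b)))^{1+\epsilon}}
\]
uniformly on $C \times W$. Writing $u = y-b$ and $v = b-x$, so that $y - x = u + v$, this reduces to uniform boundedness of
\[
R(u,v) := \frac{u}{u+v}\left(\frac{\log(b/u)}{\log((b-v)/(u+v))}\right)^{1+\epsilon}
\]
on $(0,(1-b)/2] \times (0, b - f_\alpha^{-1}(b)]$. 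When $v \lesssim u$ the two logarithmic factors are comparable and $R$ is bounded by an absolute constant; when $v \gg u$, the prefactor $u/(u+v) \asymp u/v$ decays and easily absorbs the slow logarithmic growth, in the spirit of the calculation in the proof of Lemma \ref{Lem:InvClassRho}.

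The overshoot contribution is produced by a direct jump from some $x \in C$ to an overshoot point $y^* \in [(b+1)/2, f_\alpha(b)]$, followed by a deterministic right-branch iteration $y^* \mapsto 1 - 2^{-k}(1 - y^*) \in W$. For such $y^*$ the estimates $y^* - x \ge (1-b)/2$ and $\log(x/(y^*-x)) \ge -\alpha\log(2b) > 0$ give a uniform upper bound on $p_x(y^*)$, and the Jacobian factor $2^{-k}$ together with the bounded number of admissible $k$ (coming from $y^* \le f_\alpha(b) < 1$) sum to produce a uniformly bounded density on $W$. Since $(y-b)(\log(b/(y-b)))^{1+\epsilon}$ is bounded on $W$, such a bounded density is dominated by $K_2/[(y-b)(\log(b/(y-b)))^{1+\epsilon}]$ for $K_2$ sufficiently large, and $K := K_1 + K_2$ completes the argument. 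The principal technical difficulty lies in the uniform control of $R(u,v)$ in the regime where $u$ and $v$ are both small but comparable, where neither asymptotic regime is decisive and the constants from the two regimes must be patched together.
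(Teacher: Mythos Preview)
Your argument is correct and follows the same overall architecture as the paper --- an exponential tail for $\tau_W$ via a minorisation argument, then a decomposition of the hitting density into a ``direct jump'' piece and an ``overshoot + right-branch collapse'' piece, with the latter shown to contribute a uniformly bounded density on $W$. Two small points are worth flagging.

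First, a terminological slip: the measure against which you integrate $p_{x'}(y)$ in the direct contribution is the occupation measure $\mu_x=\sum_{k\ge 0}\text{law}(X_k)\!\upharpoonright\!\{X_0,\dots,X_k\in C\}$, which has total mass $\mb E_x[\#\{k\ge 0:X_k\in C\}]$. This is finite by your first conclusion, but it is not in general a sub-probability. The argument is unaffected once you replace ``sub-probability'' by ``measure of bounded total mass''.

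Second, and more substantively, your route to the key pointwise bound $p_x(y)\le K_1/[(y-b)(\log\frac{b}{y-b})^{1+\epsilon}]$ is genuinely different from the paper's. The paper introduces an intermediate point $d\in(f_\alpha^{-1}b,b)$ with $f_\alpha(d)<\tfrac12$, so that orbits in $[f_\alpha^{-1}b,d]$ cannot overshoot $W$ and contribute only a uniformly bounded density; it then conditions on the last point of $[d,b)$ visited and, in a small square $[b-\delta,b]\times[b,b+\delta]$, observes that $x\mapsto p_x(y)$ is increasing, whence $p_x(y)\le p_b(y)$, and $p_b(y)$ is \emph{exactly} a constant multiple of the target function. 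You instead bound the ratio $R(u,v)$ directly. This works: on the support one always has $L:=\log\frac{b-v}{u+v}\ge -\alpha\log(2b)=:L_0>0$ (from $\gamma_x(y)\ge\alpha$), and writing $\log\frac bu=L+D$ with $D=\log\frac{b}{b-v}+\log(1+\tfrac vu)$, the cases $v\le u$ (so $D$ bounded) and $v>u$ (so $R\le (1+r)^{-1}(1+(C_0+\log(1+r))/L_0)^{1+\epsilon}$ with $r=v/u$, which is bounded in $r\ge 1$) give a uniform bound. The paper's monotonicity trick is cleaner and avoids the regime analysis, while your approach is more hands-on but dispenses with the auxiliary point $d$ and handles all of $C$ at once.
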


\begin{proof}
Let $d\in (f_\alpha^{-1}b,b)$ be chosen so that $f_\alpha(d)<\frac 12$.
For any $x\in [f_\alpha^{-1}b,d)$, the time to enter $[d,f_\alpha(d)]$ is bounded 
above by a geometric random variable (with parameter not depending on $x$ 
since $P(x,[d,f_\alpha(d)])\ge P(f_\alpha^{-1}b,[d,f_\alpha(d)])>0$ for all $x\in [f_\alpha^{-1}b,d]$).
For any $x\in [d,b)$, the time to enter $[b,f_\alpha(b)]$ is bounded above by another geometric
random variable by an identical argument. On the interval $(\frac{b+1}2,f_\alpha(b)]$, 
the time to enter $W$ is uniformly bounded above.
Summing these contributions gives the required geometric upper bounds on $\tau_W$. 

We look at the distribution of $X_{\tau_{W}}$. If $x\in [f_\alpha^{-1}b,d]$, 
then we study
\begin{equation}\label{Eq:contri1}
\sum_{n=1}^\infty \mathbf 1_{C^c}(P\mathbf 1_{[f_\alpha^{-1}b,d]})^n\delta_x,
\end{equation}
the contribution to the density on $W$ coming from points that stay in $[f_\alpha^{-1}b,d]$
until they enter $C^c$ (necessarily into $W$ since $f_\alpha(d)<\frac 12$). (Note that this contribution
is trivial if $x>d$.)

If $x\in [f_\alpha^{-1}b,d]$, we showed above that the number of steps before leaving the interval
is bounded above by a geometric random variable; and $p_x(y)$ is bounded
above for $(x,y)\in [f_\alpha^{-1}b,d]\times W$. Combining these,
we see that the density of the contribution in \eqref{Eq:contri1} is uniformly bounded above.

For the remaining part of the distribution, we condition on the last point of $[d,b)$ that is visited.
We show that for all $x\in [d,b)$,
conditional on leaving $C$ in a single step, the density of $X_{\tau_W}$ conditioned
on $X_0=x$ and $X_1\ge b$
satisfies a bound of the form \eqref{Eq:CondrhoPlus}.
First note, that for $x\in [d,b)$ and $y\ge \frac{b+1}2$, $p_x(y)$ is uniformly bounded
above, so that when it is mapped under iteration of the second branch back to $W$,
it gives a contribution that is uniformly bounded above (similar to \eqref{Eq:CollapsetoW}).
It therefore suffices to show that for $x\in [d,b]$, $p_x(y|y\in W)$ satisfies a bound
of the form \eqref{Eq:CondrhoPlus} with a $K$ that does not depend on $x$. Since the probability
of entering $W$ in one step is bounded below for these $x$'s, it suffices to show the existence of a 
$K$ such that for all $x\in[d,b]$ and all $y\in W$, $p_x(y)\le K/[(y-b)(\log\frac b{y-b})^{1+\epsilon}]$.
For $x$ in a small interval $[b-\delta,b]$ and $y$ in a small interval $[b,b+\delta]$, one
may check that $p_x(y)$ is increasing in $x$, so that $p_x(y)\le p_b(y)$ as required. If either $x$
or $y$ lies outside this range, there is a uniform upper bound on $p_x(y)$, establishing the required 
inequality.
\end{proof}

\subsection{Step 2}
Finally, we verify condition \eqref{Eq:CondHitPet} of Theorem \ref{Thm:TT}. 
\begin{proposition}\label{prop:Eretpower}
Suppose that $0<\alpha<1$ is the minimum of the support of the measure $\nu$
and let $1<\gamma<\frac 1\alpha$. For every $x$
belonging to the petite set $C=[f_\alpha^{-1}b,b]$,
\[
\sup_{x\in C}\mb E_x\left[\tau_C^{\gamma}\right]<\infty.
\]
\end{proposition}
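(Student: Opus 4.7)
The plan is to decompose $\tau_C \le T_1 + T_2$, where $T_1 := \tau_H + 1$ counts the time from $x \in C$ until the orbit first makes an excursion through $W$, hits $H$, and drops into $(0,b)$ via one step of the right branch $y\mapsto 2y-1$; and $T_2$ is the residual time needed to monotonically climb from $(0,b)$ back into $C$. This split is natural because, once in $(0,f_\alpha^{-1}b)$, the bounds $x < f_\gamma(x) \le f_\alpha(x) \le b$ force the orbit to climb monotonically into $C$ without further right-branch excursions. The target is to show that $T_1$ has exponential tails uniformly in $x \in C$ and that $T_2$ has polynomial tails of order $t^{-\gamma}$ for every $1 < \gamma < 1/\alpha$.

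For $T_1$: Lemma \ref{Lem:LeaveC} provides that $\tau_W$ is geometrically distributed uniformly in $x \in C$ and the density of $X_{\tau_W}$ satisfies \eqref{Eq:CondrhoPlus}. Restricting this density to $[b,\tfrac 12)$ yields a subdensity satisfying \eqref{Eq:Condrho}, to which Proposition \ref{Prop:StretExp} applies, giving exponential tails for the additional time to enter $H$; on the complementary portion where $X_{\tau_W} \in H$ one has $\tau_H = \tau_W$. Combining gives exponential tails for $\tau_H$ uniformly in $x \in C$, hence $\mb E_x[T_1^\gamma]$ is uniformly finite over $C$ for every $\gamma > 0$.

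For $T_2$, the key intermediate step is to prove that $Y := X_{T_1}$ has a density on $(0,b)$ bounded above by some $K_0$ independent of $x \in C$. Since $Y = 2X_{\tau_H} - 1$, it suffices to bound the density of $X_{\tau_H}$ on $H$. I propose to obtain this by iterating Lemma \ref{Lem:InvClassRho}: its first conclusion contracts the class \eqref{Eq:Condrho} by a factor $\sigma < 1$ under the sub-stochastic operator associated to staying in $[b, \tfrac 12)$, so summing the geometric series of contributions to the density entering $H$ at each step (each bounded in $L^\infty$ by the constant $M$ provided by the second conclusion of the same lemma) yields the required uniform upper bound on the density of $X_{\tau_H}$, hence on that of $Y$. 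With this in hand, Fubini applied to the monotone orbit on $(0,f_\alpha^{-1}b)$ gives
\[
\mb P_x(T_2 > t) \le K_0 \int_0^{f_\alpha^{-1}b} \mb P_y(\tau_C > t)\,dy,
\]
and the right-hand side is $\mc O(t^{-\gamma})$ for every $1 < \gamma < 1/\alpha$ by exactly the Hoeffding-plus-monotonicity argument used in the proof of Proposition \ref{Prop:As1and2valid}. That argument uses only $\alpha \in \mathrm{supp}(\nu)$ and not the boundedness of $\mathrm{supp}(\nu)$, so it transfers to the present heavy-tailed setting. Combined with $(a+b)^\gamma \le 2^\gamma(a^\gamma + b^\gamma)$, the desired uniform bound on $\mb E_x[\tau_C^\gamma]$ follows.

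The main obstacle I foresee is the density bound on $Y$. The logarithmic singularity of \eqref{Eq:CondrhoPlus} at $b$ is mild enough for the integral estimate in the proof of Lemma \ref{Lem:InvClassRho} to control the portion of $P\rho$ landing in $H$, but the geometric summation must be carried out carefully: the second conclusion of Lemma \ref{Lem:InvClassRho}, giving a uniform upper bound on $P\rho$ over $[\tfrac12,\tfrac{b+1}2)$, is precisely what prevents any runaway blow-up as iterates pass from $[b,\tfrac12)$ into $H$.
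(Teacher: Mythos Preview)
Your decomposition $\tau_C \le T_1 + T_2$ and the overall strategy---exponential tails for the time to reach $H$, then polynomial tails for the climb from $(0,b)$ back to $C$ via a uniform density bound on $X_{\tau_H}$---coincide with the paper's proof (which writes $\tau_C\le \tau_W+\tau_{W\to H}+\tau_{H\to C}$ and combines via Minkowski). Your observation that the Hoeffding argument behind Proposition~\ref{Prop:As1and2valid} does not use boundedness of $\mathrm{supp}(\nu)$ is also the point the paper relies on, implicitly, when it invokes condition~\eqref{Eq:Assum2} here.

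There is, however, a genuine gap in your density bound for $X_{\tau_H}$. Iterating Lemma~\ref{Lem:InvClassRho} under the sub-stochastic operator $\mathbf 1_{[b,\frac12)}P\mathbf 1_{[b,\frac12)}$ tracks only the mass that remains in $[b,\tfrac12)$ until it steps into $H$. But from $x\in[b,\tfrac12)$ the orbit can instead jump to $[\tfrac{b+1}2,1]$ (recall $f_\alpha(b)>\tfrac{b+3}4$), cascade down the right branch, and re-enter $W$---landing either directly in $H$ or again in $[b,\tfrac12)$---and this cycle can repeat arbitrarily often before $\tau_H$. Every such excursion contributes to the density of $X_{\tau_H}$; your geometric series in $\sigma$ discards this mass rather than bounding it.

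The paper closes this by treating three entry routes to $H$ separately. For mass entering $H$ after an excursion to $[\tfrac{b+1}2,1]$, one conditions on the last point of $[b,\tfrac12)$ before the excursion and invokes Lemma~\ref{Lem:BoundsW}: the re-entry density on $W$ is uniformly bounded on all of $W$, independent of that last point. Since by Lemma~\ref{Lem:ReturnstoW} a fixed fraction $q'>0$ of the mass is absorbed into $H$ at each re-entry, the mass surviving $i$ excursions decays like $(1-q')^i$, and the sum of uniformly bounded re-entry densities over all $i$ is finite. Your use of Lemma~\ref{Lem:InvClassRho} controls the within-sojourn contribution to the $H$-density; Lemma~\ref{Lem:BoundsW} together with the geometric mass decay controls the between-sojourn contribution. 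Both pieces are required.
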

\begin{proof}
Notice that we can bound the return time to the petite set $\tau_C$ as
\(
\tau_C\le\tau_W+\tau_{W\to H}+\tau_{H\rightarrow C}
\)
where $\tau_W$ is the first time to hit $W$, $\tau_{W\to H}$ is the random time after then 
that it takes to hit $H$ (this may be 0), and $\tau_{H\rightarrow
C}$ is the random time needed to go back to $C$ starting from when $H$ is hit.  It
follows from Minkowski's inequality that if $\sup_{x\in C}\mb E_x[\tau_W^\gamma]$,
$\sup_{x\in C}\mb E_x[\tau_{W\to H}^\gamma]$ and 
$\sup_{x\in C}\mb E_x[\tau_{H\rightarrow C}^\gamma]$ are finite, then also
$\sup_{x\in C}\mb E_x[\tau_C^\gamma]$ is finite.

By Lemma \ref{Lem:LeaveC}, $\sup_{x\in C}\mb E_x[\tau_W^{\gamma}]<\infty$.
To show that $\sup_{x\in C}\mb E_x[\tau_{W\to H}^\gamma]<\infty$, Lemma
\ref{Lem:LeaveC} shows that the distribution of $X_{\tau_W}$ satisfies \eqref{Eq:CondrhoPlus} and
the conclusion follows from Proposition \ref{Prop:StretExp}.

To estimate $\mb E_x\tau_{H\to C}^\gamma$,
we need to control the distribution of $X_{\tau_H}$. 
The system may enter $H$ in a number of ways: from $C$ without previously entering $W$;
by making a number of visits to $W$ and then entering direct from $W$; 
or by making a number of visits to $W$ and then entering from $[\frac{b+1}2,1]$. 
For direct entry from $C$, conditioning on the last point visited in $C$, the density on $H$
is uniformly bounded above. Similarly, conditioning on the last point visited in $W$ before entering $H$
from the right, Lemma \ref{Lem:BoundsW} gives a uniform upper bound on the contribution to the density 
of $X_{\tau_H}$. 
For the points entering from $W$, Lemmas \ref{Lem:BoundsW} and \ref{Lem:LeaveC} ensure that on 
entry to $[b,\frac 12)$, the density satisfies \eqref{Eq:Condrho} (with a fixed $K$). The sum of the
densities prior to exiting $[b,\frac 12)$ is estimated in \eqref{Eq:sumbound} and this gives a bound on the
density of the last position in $W$ before exiting, which is $1/(1-\sigma)$ times the bound in
\eqref{Eq:Condrho}. Then the second part of Lemma \ref{Lem:InvClassRho} gives a uniform upper 
bound on this contribution to the density on $H$ of $X_{\tau_H}$. Taken together, we have
shown that the distribution of $X_{\tau_H}$ (and therefore the density of its image on $[0,b)$)
is absolutely continuous with density bounded above by a 
constant that is independent of $x$. Since for $z\in [0,b)$, $\tau_C\le \tau_{[\frac 12,1]}$,
and Proposition \ref{Prop:As1and2valid} established condition \eqref{Eq:Assum2},
we see $\mb E_x\tau_{H\to C}^\gamma<\infty$ as required.
\end{proof}

\subsection{Step 3}

\begin{proof}[Proof of Theorem \ref{thm:Linftydecorr}]
Let $1<\gamma<\frac 1\alpha$ and let $r(n)=n^{\gamma-1}$, so that 
$r(n)$ is a subgeometric sequence with $\sum_{j=0}^{n-1} r(n)=(1+o(1))\frac{n^{\gamma}}\gamma$.
Now Proposition \ref{prop:Eretpower} shows that the hypotheses of Theorem \ref{Thm:TT} are
satisfied for this sequence $r(n)$, so that 
$$
\int \|P^n(x,\cdot)-\pi\|_\text{TV}\,dm(x)=o(n^{1-\gamma}).
$$
Finally, for $\phi,\psi\in L^\infty([0,1])$,
\begin{align*}
&\left|\int\int \phi(x)\psi(f_{\bo\omega}^n x)\,dm(x)\,d\nu_{\alpha,\epsilon}^\N(\bo\omega)
-
\int\phi(x)\,dm(x)\int\psi(y)\,d\pi(y)\right|\\
&=
\left|\int \phi(x)\left(\int \psi(y)\,P^n(x,dy)-\int \psi(y)\,d\pi(y)\right)\,dm(x)\right|\\
&\le \|\phi\|_\infty \|\psi\|_\infty \int \|P^n(x,\cdot)-\pi\|_\text{TV}\,dm(x)=o(n^{1-\gamma}).
\end{align*}
\end{proof}

\bibliographystyle{amsalpha}
\bibliography{LSVbib}
\newpage

\end{document}